\newtheorem{theorem}{Theorem}[section]
\newtheorem{corollary}{Corollary}
\newtheorem{criterion}{Criterion}[section]
\newtheorem{definition}[theorem]{Definition}
\newtheorem{proposition}{Proposition}
[section]
\newenvironment{proof}[1][Proof]{\noindent\textbf{#1.} }{\ \rule{0.5em}{0.5em}}
\begin{document}

\title{{\Large Strongly Stable Automorphisms of the Categories of the
Finitely Generated Free Algebras of the some Varieties of Linear Algebras.}}
\author{{\Large A.Tsurkov} \\
Institute of Mathematics and Statistics.\\
University S\~{a}o Paulo. \\
Rua do Mat\~{a}o, 1010 \\
Cidade Universit\'{a}ria \\
S\~{a}o Paulo - SP - Brazil - CEP 05508-090 \\
arkady.tsurkov@gmail.com}
\maketitle

\begin{abstract}
In this paper we consider some classical varieties of linear algebras over
the field $k$ such that $char\left( k\right) =0$. If we denote by $\Theta $
one of these varieties, then $\Theta ^{0}$ is a category of the finite
generated free algebras of the variety $\Theta $. In this paper we calculate
for the considered varieties the quotient group $\mathfrak{A/Y}$, where $%
\mathfrak{A}$ is a group of the all automorphisms of the category $\Theta
^{0}$ and $\mathfrak{Y}$ is a subgroup of the all inner automorphisms of
this category. The quotient group $\mathfrak{A/Y}$ measures difference
between the geometric equivalence and automorphic equivalence of algebras
from the variety $\Theta $. The results of this paper and of the \cite%
{Tsurkov} are summarized in the table in the Section \ref{table}.
\end{abstract}

\section{Introduction.\label{intro}}

\setcounter{equation}{0}

This is a paper from universal algebraic geometry. All definitions of the
basic notions of the universal algebraic geometry can be found, for example,
in \cite{PlotkinVarCat}, \cite{PlotkinNotions} and \cite{PlotkinSame}.

This research is a continuation of the \cite{Tsurkov}.

If we will compare the geometric equivalence and the automorphic equivalence
of the one-sorted universal algebras from the some variety $\Theta $, we
must take a countable set of symbols $X_{0}=\left\{ x_{1},x_{2},\ldots
,x_{n},\ldots \right\} $ and consider all free algebras $F\left( X\right) $
of the variety $\Theta $, generated by finitely subsets $X\subset X_{0}$.
These algebras: $\left\{ F\left( X\right) \mid X\subset X_{0},\left\vert
X\right\vert <\infty \right\} $ - will be objects of the category $\Theta
^{0}$. Morphisms of the category $\Theta ^{0}$ will be homomorphisms of
these algebras.

If our variety $\Theta $ possesses the IBN property: for free algebras $%
F\left( X\right) ,F\left( Y\right) \in \Theta $ we have $F\left( X\right)
\cong F\left( Y\right) $ if and only if $\left\vert X\right\vert =\left\vert
Y\right\vert $ - then we have \cite[Theorem 2]{PlotkinZhitAutCat} the
decomposition%
\begin{equation}
\mathfrak{A=YS}.  \label{decomp}
\end{equation}%
of the group $\mathfrak{A}$ of all automorphisms of the category\textit{\ }$%
\Theta ^{0}$. Hear $\mathfrak{Y}$ is a group of all inner automorphisms of
the category\textit{\ }$\Theta ^{0}$ and $\mathfrak{S}$ is a group of all
strongly stable automorphisms of the category\textit{\ }$\Theta ^{0}$. The
definitions of the notions of inner automorphisms and strongly stable
automorphisms can be found, for example, in \cite{PlotkinZhitAutCat} and 
\cite{Tsurkov}. But we will give these definitions hear.

\begin{definition}
\label{inner}An automorphism $\Upsilon $ of a category $\mathfrak{K}$ is 
\textbf{inner}, if it is isomorphic as a functor to the identity
automorphism of the category $\mathfrak{K}$.
\end{definition}

It means that for every $A\in \mathrm{Ob}\mathfrak{K}$ there exists an
isomorphism $s_{A}^{\Upsilon }:A\rightarrow \Upsilon \left( A\right) $ such
that for every $\psi \in \mathrm{Mor}_{\mathfrak{K}}\left( A,B\right) $ the
diagram%
\begin{equation*}
\begin{array}{ccc}
A & \overrightarrow{s_{A}^{\Upsilon }} & \Upsilon \left( A\right) \\ 
\downarrow \psi &  & \Upsilon \left( \psi \right) \downarrow \\ 
B & \underrightarrow{s_{B}^{\Upsilon }} & \Upsilon \left( B\right)%
\end{array}%
\end{equation*}%
\noindent commutes.

\begin{definition}
\label{str_stab_aut}\textbf{\hspace{-0.08in}. }\textit{An automorphism $\Phi 
$ of the category }$\Theta ^{0}$\textit{\ is called \textbf{strongly stable}
if it satisfies the conditions:}

\begin{enumerate}
\item[StSt1)] $\Phi $\textit{\ preserves all objects of }$\Theta ^{0}$%
\textit{,}

\item[StSt2)] \textit{there exists a system of bijections }$\left\{
s_{F}^{\Phi }:F\rightarrow F\mid F\in \mathrm{Ob}\Theta ^{0}\right\} $%
\textit{\ such that }$\Phi $\textit{\ acts on the morphisms $\psi
:D\rightarrow F$ of }$\Theta ^{0}$\textit{\ by this way: }%
\begin{equation}
\Phi \left( \psi \right) =s_{F}^{\Phi }\psi \left( s_{D}^{\Phi }\right)
^{-1},  \label{biject_action}
\end{equation}

\item[StSt3)] $s_{F}^{\Phi }\mid _{X}=id_{X},$ \textit{\ for every free
algebra} $F=F\left( X\right) $.
\end{enumerate}
\end{definition}

The subgroup $\mathfrak{Y}$ is a normal in $\mathfrak{A}$.

By \cite{PlotkinSame} only strongly stable automorphism $\Phi $ can provide
us automorphic equivalence of algebras which not coincides with geometric
equivalence of algebras. Therefore, in some sense, difference from the
automorphic equivalence to the geometric equivalence is measured by the
quotient group $\mathfrak{A/Y\cong S/S\cap Y}$.

\section{Verbal operations and strongly stable automorphisms.\label%
{operations}}

\setcounter{equation}{0}

In this paper, as in the \cite{Tsurkov} we use the method of verbal
operations for the finding of the strongly stable automorphisms of the
category $\Theta ^{0}$. The explanation of this method there is in \cite%
{PlotkinZhitAutCat} and \cite{TsurkovAutomEquiv}.

We denote the signature of our variety $\Theta $ by $\Omega $, by $m_{\omega
}$ we denote the arity of $\omega $ for every $\omega \in \Omega $. If $%
w=w\left( x_{1},\ldots ,x_{m_{\omega }}\right) \in F\left( x_{1},\ldots
,x_{m_{\omega }}\right) \in \mathrm{Ob}\Theta ^{0}$, then we can define in
every algebra $H\in \Theta $ by using of the this word $w$ the new operation 
$\omega ^{\ast }$: $\omega ^{\ast }\left( h_{1},\ldots ,h_{m_{\omega
}}\right) =w\left( h_{1},\ldots ,h_{m_{\omega }}\right) $ for every $%
h_{1},\ldots ,h_{m_{\omega }}\in H$. This operation we call the \textbf{%
verbal operation} defined on the algebra $H$ by the word $w$. If we have a
system of words $W=\left\{ w_{\omega }\mid \omega \in \Omega \right\} $ such
that $w_{\omega }\in F\left( x_{1},\ldots ,x_{m_{\omega }}\right) $ then we
denote by $H_{W}^{\ast }$ the algebra which coincide with $H$ as a set but
instead the original operations $\left\{ \omega \mid \omega \in \Omega
\right\} $ it has the system of the verbal operations $\left\{ \omega ^{\ast
}\mid \omega \in \Omega \right\} $ defined by words from the system $W$.

We suppose that we have the system of words $W=\left\{ w_{\omega }\mid
\omega \in \Omega \right\} $ satisfies the conditions:

\begin{enumerate}
\item[Op1)] $w_{\omega }\left( x_{1},\ldots ,x_{m_{\omega }}\right) \in
F\left( x_{1},\ldots ,x_{m_{\omega }}\right) \in \mathrm{Ob}\Theta ^{0}$,

\item[Op2)] for every $F=F\left( X\right) \in \mathrm{Ob}\Theta ^{0}$ there
exists an isomorphism $\sigma _{F}:F\rightarrow F_{W}^{\ast }$ such that $%
\sigma _{F}\mid _{X}=id_{X}$.
\end{enumerate}

It is clear isomorphisms $\sigma _{F}$ are defined uniquely by the system of
words $W$.

The set $S=\left\{ \sigma _{F}:F\rightarrow F\mid F\in \mathrm{Ob}\Theta
^{0}\right\} $ is a system of bijections which satisfies the conditions:

\begin{enumerate}
\item[B1)] for every homomorphism $\psi :A\rightarrow B\in \mathrm{Mor}%
\Theta ^{0}$ the mappings $\sigma _{B}\psi \sigma _{A}^{-1}$ and $\sigma
_{B}^{-1}\psi \sigma _{A}$ are homomorphisms;

\item[B2)] $\sigma _{F}\mid _{X}=id_{X}$ for every free algebra $F\in 
\mathrm{Ob}\Theta ^{0}$.
\end{enumerate}

So we can define the strongly stable automorphism\textit{\ }by this system
of bijections. This automorphism preserves all objects of $\Theta ^{0}$ and
acts on morphism of $\Theta ^{0}$ by formula (\ref{biject_action}), where $%
s_{F}^{\Phi }=$ $\sigma _{F}$.

Vice versa if we have a strongly stable automorphism $\Phi $ of the category 
$\Theta ^{0}$ then its system of bijections $S=\left\{ s_{F}^{\Phi
}:F\rightarrow F\mid F\in \mathrm{Ob}\Theta ^{0}\right\} $ defined uniquely.
Really, if $F\in \mathrm{Ob}\Theta ^{0}$ and $f\in F$ then%
\begin{equation}
s_{F}^{\Phi }\left( f\right) =s_{F}^{\Phi }\psi \left( x\right) =\left(
s_{F}^{\Phi }\psi \left( s_{D}^{\Phi }\right) ^{-1}\right) \left( x\right)
=\left( \Phi \left( \psi \right) \right) \left( x\right) ,
\label{autom_bijections}
\end{equation}%
where $D=F\left( x\right) $ - $1$-generated free linear algebra - and $\psi
:D\rightarrow F$ homomorphism such that $\psi \left( x\right) =f$. Obviously
that this system of bijections $S=\left\{ s_{F}^{\Phi }:F\rightarrow F\mid
F\in \mathrm{Ob}\Theta ^{0}\right\} $ fulfills conditions B1) and B2) with $%
\sigma _{F}=s_{F}^{\Phi }$.

If we have a system of bijections $S=\left\{ \sigma _{F}:F\rightarrow F\mid
F\in \mathrm{Ob}\Theta ^{0}\right\} $ which fulfills conditions B1) and B2)
than we can define the system of words $W=\left\{ w_{\omega }\mid \omega \in
\Omega \right\} $ satisfies the conditions Op1) and Op2) by formula%
\begin{equation}
w_{\omega }\left( x_{1},\ldots ,x_{m_{\omega }}\right) =\sigma _{F_{\omega
}}\left( \omega \left( \left( x_{1},\ldots ,x_{m_{\omega }}\right) \right)
\right) \in F_{\omega },  \label{der_veb_opr}
\end{equation}%
where $F_{\omega }=F\left( x_{1},\ldots ,x_{m_{\omega }}\right) $.

By formulas (\ref{autom_bijections}) and (\ref{der_veb_opr}) we can check
that there are

\begin{enumerate}
\item one to one and onto correspondence between strongly stable
automorphisms of the category $\Theta ^{0}$ and systems of bijections
satisfied the conditions B1) and B2)

\item one to one and onto correspondence between systems of bijections
satisfied the conditions B1) and B2) and systems of words satisfied the
conditions Op1) and Op2).
\end{enumerate}

Therefore we can calculate the group $\mathfrak{S}$ if we can find the all
system of words which fulfill conditions Op1) and Op2). For calculation of
the group $\mathfrak{S\cap Y}$ we also have a

\begin{criterion}
\label{inner_stable}The strongly stable automorphism $\Phi $ of the category 
$\Theta ^{0}$ which corresponds to the system of words $W$ is inner if and
only if for every $F\in \mathrm{Ob}\Theta ^{0}$ there exists an isomorphism $%
c_{F}:F\rightarrow F_{W}^{\ast }$ such that $c_{F}\psi =\psi c_{D}$ fulfills
for every $\left( \psi :D\rightarrow F\right) \in \mathrm{Mor}\Theta ^{0}$.
\end{criterion}

\section{Verbal operations in linear algebras.\label%
{operations_in_linear_alg}}

\setcounter{equation}{0}

From now on, the variety $\Theta $ will be some specific variety of the
linear algebras over infinite field $k$, which has the characteristic $0$.
We nether consider the vanished varieties, i., e., variety defined by
identity $x=0$ or variety defined by identity $x_{1}x_{2}=0$.

We consider linear algebras as one-sorted universal algebras, i. e.,
multiplication by scalar we consider as $1$-ary operation for every $\lambda
\in k$: $H\ni h\rightarrow \lambda h\in H$ where $H\in \Theta $. Hence the
signature $\Omega $ of algebras of our variety contains these operations: $0$%
-ary operation $0$; $\left\vert k\right\vert $ $1$-ary operations of
multiplications by scalars; $2$-ary operation $\cdot $ and $2$-ary operation 
$+$. We will finding the system of words $W=\left\{ w_{\omega }\mid \omega
\in \Omega \right\} $ satisfies the conditions Op1) and Op2). We denote the
words corresponding to these operations by $w_{0}$, $w_{\lambda }$ for all $%
\lambda \in k$, $w_{\cdot }$, $w_{+}$. So%
\begin{equation}
W=\left\{ w_{\omega }\mid \omega \in \Omega \right\} =\left\{
w_{0},w_{\lambda }\left( \lambda \in k\right) ,w_{+},w_{\cdot }\right\}
\label{words_list}
\end{equation}%
in our case. From this on we consider only these systems of words.

Some time we denote by $\lambda \ast $ the operation defined by the word $%
w_{\lambda }\left( \lambda \in k\right) $, by $\perp $ the operation defined
by the word $w_{+}$ and by $\times $ the operation defined by the word $%
w_{\cdot }$.

We denote the group of all automorphisms of the field $k$ by $\mathrm{Aut}k$.

We use in our research the familiar fact that every variety of the linear
algebras over infinite field $k$ is multi-homogenous. So, for example, every 
$F\left( X\right) \in \mathrm{Ob}\Theta ^{0}$ can be decompose to the direct
sum of the linear spaces of elements which are homogeneous according the sum
of degrees of generators from the set $X$: $F\left( X\right)
=\bigoplus\limits_{i=1}^{\infty }F_{i}$. We also denote the two sided ideals 
$\bigoplus\limits_{i=j}^{\infty }F_{i}=F^{j}$. $F_{i}F_{j}\subset F_{i+j}$
and $F^{i}F^{j}\subset F_{i+j}$ fulfills for every $1\leq i,j<\infty $. From
now on, the word "ideal" means two sided ideal of linear algebra.

All our varieties $\Theta $ possess the IBN property, because $\left\vert
X\right\vert =\dim F/F^{2}$ fulfills for all free algebras $F=F\left(
X\right) \in \mathrm{Ob}\Theta ^{0}$. So we have the decomposition (\ref%
{decomp}) for group of all automorphisms of the category $\Theta ^{0}$.

\section{Classical varieties of linear algebras.}

\setcounter{equation}{0}

In this Section we consider as the variety $\Theta $ the varieties of the
all commutative algebras, of the all power associative algebras, i., e., the
variety of linear algebras defined by identities%
\begin{equation*}
x\left( x^{2}\right) =\left( x^{2}\right) x,
\end{equation*}%
\begin{equation}
x\left( x\left( x^{2}\right) \right) =x\left( \left( x^{2}\right) x\right)
=\left( x\left( x^{2}\right) \right) x=\left( \left( x^{2}\right) x\right)
x=\left( x^{2}\right) \left( x^{2}\right)  \label{p_ass_id}
\end{equation}%
and so on, of the all alternative algebras, of the all Jordan algebras and
arbitrary subvariety defined by identities with coefficients from $%
\mathbb{Z}
$ of the variety of the all anticommutative algebras.

For the calculating of the group $\mathfrak{S}$ we consider an arbitrary
strongly stable automorphism $\Phi $ of the category $\Theta ^{0}$\ and we
will find for the all possible forms of the system of words $W$ which
corresponds to the automorphism $\Phi $.

For the all considered varieties $F\left( \varnothing \right) =\left\{
0\right\} $, so $w_{0}=0$.

The crucial point is the finding of the words $w_{\lambda }\left( x\right)
\in F\left( x\right) $, where $\lambda \in k$. The system of words $W$ must
fulfills conditions Op1) and Op2). By condition Op2) all axioms of the
variety $\Theta $ must hold in the $F_{W}^{\ast }$ for every $F\in \mathrm{Ob%
}\Theta ^{0}$. For every $\lambda \in k^{\ast }$ must holds 
\begin{equation}
w_{\lambda ^{-1}}\left( w_{\lambda }\left( x\right) \right) =w_{\lambda
}\left( w_{\lambda ^{-1}}\left( x\right) \right) =x.  \label{lambda_invers}
\end{equation}%
So the mapping $F\left( x\right) \ni x\rightarrow w_{\lambda }\left(
x\right) \in F\left( x\right) $ can by extended to the isomorphism.

By \cite{ShirshovComm} the variety of the all commutative algebras is a
Shraier variety and by \cite{Lewin} all automorphisms of the free algebras
of these varieties are tame. So if $\Theta $ is the variety of the all
commutative algebras, then for $\lambda \in k^{\ast }$ we have that 
\begin{equation}
w_{\lambda }\left( x\right) =\varphi \left( \lambda \right) x,
\label{scalar_mult}
\end{equation}
where $\varphi \left( \lambda \right) \in k$. If $\lambda =0$, then must
fulfills $w_{\lambda }\left( x\right) =0$, so in this case we also can write
(\ref{scalar_mult}), where $\varphi \left( \lambda \right) =0$.

If $\Theta $ is the variety of the all power associative algebras, then $%
F\left( x\right) $ is the algebra of the polynomials of degrees no less then 
$1$. Hence from (\ref{lambda_invers}) we can conclude that $\deg w_{\lambda
}\left( x\right) =1$ and (\ref{scalar_mult}) holds. Similar result we have
for the variety of the all alternative algebras and for the variety of the
all Jordan algebras, because these varieties are subvarieties of the variety
of the all power associative algebras (see \cite[Chapter 2, Theorem 2]%
{ZhSlSheShiAlmost} and \cite[Chapter 3, Corollary from Theorem 8]%
{ZhSlSheShiAlmost}), so in these varieties $F\left( x\right) $ is also the
algebra of the polynomials of degrees no less then $1$.

We conclude (\ref{scalar_mult}) for the arbitrary subvariety defined by
identities with coefficients from $%
\mathbb{Z}
$ of the variety of the all anticommutative algebras from the fact that in
this variety $\dim F\left( x\right) =1$. Therefore in all our varieties we
have (\ref{scalar_mult}) for all $\lambda \in k$.

$\lambda \ast \left( \mu \ast x\right) =\left( \lambda \mu \right) \ast x$
must fulfills in $F\left( x\right) $ for every $\lambda ,\mu \in k$. We can
conclude from this axiom as in \cite{Tsurkov} that $\varphi \left( \lambda
\mu \right) =\varphi \left( \lambda \right) \varphi \left( \mu \right) $.
Also by using \cite[Proposition 4.2]{TsurkovAutomEquiv} we can prove that $%
\varphi :k\rightarrow k$ is a surjection.

After this we can conclude from axioms $x_{1}\perp 0=x_{1}$, $0\perp
x_{2}=x_{2}$, $x_{1}\perp x_{2}=x_{2}\perp x_{1}$ and $\lambda \ast \left(
x_{1}\perp x_{2}\right) =\left( \lambda \ast x_{1}\right) \perp \left(
\lambda \ast x_{2}\right) $ as in the \cite{Tsurkov} that in all our
varieties the%
\begin{equation}
w_{+}\left( x_{1},x_{2}\right) =x_{1}+x_{2}  \label{addition}
\end{equation}%
holds. Hear we must use the decomposition of $F\left( x_{1},x_{2}\right) $
to the direct sum of the linear spaces of elements which are homogeneous
according the sum of degrees of generators, which was used in \cite{Tsurkov}.

From axiom $\left( \lambda +\mu \right) \ast x=\lambda \ast x+\mu \ast x$
for every $\lambda ,\mu \in k$ we conclude that $\varphi \left( \lambda +\mu
\right) =\varphi \left( \lambda \right) +\varphi \left( \mu \right) $. So $%
\varphi \in \mathrm{Aut}k$.

Now we must to find the all possible forms of the word $w_{\cdot }\in
F\left( x_{1},x_{2}\right) $. Hear as in \cite{Tsurkov} we use the
decomposition of $F\left( x_{1},x_{2}\right) $ to the direct sum of the
linear spaces of elements which are homogeneous according the degree of $%
x_{1}$, and after this according the degree of $x_{2}$. From axioms $0\times
x_{2}=x_{1}\times 0=0$ and $\lambda \ast \left( x_{1}\times x_{2}\right)
=\left( \lambda \ast x_{1}\right) \times x_{2}=x_{1}\times \left( \lambda
\ast x_{2}\right) $ for every $\lambda \in k$ we can conclude that $w_{\cdot
}\in F_{2}\left( x_{1},x_{2}\right) $, $w_{\cdot }\left( x_{1},0\right)
=w_{\cdot }\left( 0,x_{2}\right) =0$. It means that for the variety of the
all power associative algebras%
\begin{equation}
w_{\cdot }\left( x_{1},x_{2}\right) =\alpha _{1,2}x_{1}x_{2}+\alpha
_{2,1}x_{2}x_{1},  \label{power_assoc_mult}
\end{equation}%
where $\alpha _{1,2},\alpha _{2,1}\in k$. By condition Op2) in this variety
the multiplication in $F_{W}^{\ast }$ can not by commutative or
anticommutative, so $\alpha _{1,2}\neq \pm \alpha _{2,1}$.

For the varieties of the all commutative algebras, of the all Jordan
algebras and for the arbitrary subvariety defined by identities with
coefficients from $%
\mathbb{Z}
$ of the variety of the all anticommutative algebras we have that%
\begin{equation}
w_{\cdot }\left( x_{1},x_{2}\right) =\alpha _{1,2}x_{1}x_{2},
\label{commut_mult}
\end{equation}%
where $\alpha _{1,2}\neq 0$.

For the variety of the all alternative algebras we conclude from axiom $%
\left( x_{1}\times x_{1}\right) \times x_{2}=x_{1}\times \left( x_{1}\times
x_{2}\right) $ that%
\begin{equation}
w_{\cdot }\left( x_{1},x_{2}\right) =\alpha _{1,2}x_{1}x_{2}
\label{altern_mult_1}
\end{equation}%
or%
\begin{equation}
w_{\cdot }\left( x_{1},x_{2}\right) =\alpha _{2,1}x_{2}x_{1},
\label{altern_mult_2}
\end{equation}%
where $\alpha _{1,2},\alpha _{2,1}\neq 0$.

Now we will prove for all our varieties that the systems of words $W$
defined above fulfill condition Op2). First of all we will prove that if $%
H\in \Theta $ then $H_{W}^{\ast }\in \Theta $. It means that we will check
that in the $H_{W}^{\ast }$ the all axioms of the variety $\Theta $ hold.
All these checking can be made by direct calculations. For all our varieties
we must check only these axioms of linear algebra: $\left(
x_{1}+x_{2}\right) \times x_{3}=\left( x_{1}\times x_{3}\right) +\left(
x_{2}\times x_{3}\right) $ and $x_{1}\times \left( x_{2}+x_{3}\right)
=\left( x_{1}\times x_{2}\right) +\left( x_{1}\times x_{3}\right) $, because
other axioms are immediately concluded from the forms of the words of the
system $W$.

For the varieties of the all commutative algebras and of the all Jordan
algebras we must check the axiom $x_{1}\times x_{2}=x_{2}\times x_{1}$.

Also for the variety of the all Jordan algebras we also must check the axiom 
$\left( \left( x_{1}\times x_{1}\right) \times x_{2}\right) \times
x_{1}=\left( x_{1}\times x_{1}\right) \times \left( x_{2}\times x_{1}\right) 
$.

For the variety of the all alternative algebras we must check the axioms $%
\left( x_{1}\times x_{1}\right) \times x_{2}=x_{1}\times \left( x_{1}\times
x_{2}\right) $ and $x_{2}\times \left( x_{1}\times x_{1}\right) =\left(
x_{2}\times x_{1}\right) \times x_{1}$.

For the variety of the all power associative algebras we must check the
axioms (\ref{p_ass_id}) where the original multiplication changed by
operation $\times $. We consider a power associative algebra $H$ and the
free $1$-generated power associative algebra $F=F\left( x\right) $. We take
a monomial $u\in F$ such that coefficient of $u$ is $1$, and $\deg u=n$. If
in the monomial $u$ we change the original multiplication by operation $%
\times $ than we achieve $u^{\times }\in F_{W}^{\ast }$. It is easy to prove
by induction that $u^{\times }\left( h\right) =\left( \alpha _{1,2}+\alpha
_{2,1}\right) ^{n-1}u\left( h\right) $ holds for every $h\in H$. It finishes
the checking of the necessary axioms.

For the arbitrary subvariety defined by identities with coefficients from $%
\mathbb{Z}
$ of the variety of the all anticommutative algebras we must check the axiom 
$x_{1}\times x_{2}=-$ $1\ast \left( x_{2}\times x_{1}\right) $ and the
specific axioms of this subvariety. Our field $k$ is infinite so all axioms
of this subvariety can by presented in the homogeneous form: $%
\sum\limits_{i=1}^{m}\lambda _{i}u_{i}=0$, where $\lambda _{i}\in 
\mathbb{Z}
$, $u_{i}\in F\left( x_{1},\ldots ,x_{r}\right) $, $F\left( x_{1},\ldots
,x_{r}\right) \in \mathrm{Ob}\Theta ^{0}$, $u_{i}$ are monomials with
coefficients $1$, $\deg u_{i}=n$ for every $i$. $\lambda _{i}\in 
\mathbb{Z}
$ because all operations of the process of the homogenization of the
identities can by made with coefficients from $%
\mathbb{Q}
$. We must check that for every $H\in \Theta $ and every $h_{1},\ldots
,h_{r}\in H$ the $\sum\limits_{i=1}^{m}\lambda _{i}\ast u_{i}^{\times
}\left( h_{1},\ldots ,h_{r}\right) =0$ holds. As above by induction we can
prove that $u_{i}^{\times }\left( h_{1},\ldots ,h_{r}\right) =\alpha
_{1,2}^{n-1}u_{i}\left( h_{1},\ldots ,h_{r}\right) $, so $%
\sum\limits_{i=1}^{m}\lambda _{i}\ast u_{i}^{\times }\left( h_{1},\ldots
,h_{r}\right) =\alpha _{1,2}^{n-1}\sum\limits_{i=1}^{m}\lambda
_{i}u_{i}\left( h_{1},\ldots ,h_{r}\right) =0$.

After all these checking we can conclude that for every $F=F\left( X\right)
\in \mathrm{Ob}\Theta ^{0}$ there exists a homomorphism $\sigma
_{F}:F\rightarrow F_{W}^{\ast }$ such that $\sigma _{F}\mid _{X}=id_{X}$. As
in the \cite{Tsurkov} we can prove that $\sigma _{F}$ is an isomorphism, so
the systems of words defined above fulfill condition Op2). It completes the
calculation of the group $\mathfrak{S}$.

For calculation of the group $\mathfrak{Y}\cap \mathfrak{S}$ we can prove as
in the \cite{Tsurkov} that for the all considered varieties the strongly
stable automorphism $\Phi $ which corresponds to the defined above system of
words $W$ is inner if and only if $\alpha _{2,1}=0$ and $\varphi =id_{k}$.

So, as in the \cite{Tsurkov} we can prove that for the variety of the all
power associative algebras $\mathfrak{A/Y\cong }\left( U\left( k\mathbf{S}_{%
\mathbf{2}}\right) \mathfrak{/}U\left( k\left\{ e\right\} \right) \right) 
\mathfrak{\leftthreetimes }\mathrm{Aut}k$, where $\mathbf{S}_{\mathbf{2}}$
is the symmetric group of the set which has $2$ elements, $U\left( k\mathbf{S%
}_{\mathbf{2}}\right) $ is the group of all invertible elements of the group
algebra $k\mathbf{S}_{\mathbf{2}}$, $U\left( k\left\{ e\right\} \right) $ is
a group of all invertible elements of the subalgebra $k\left\{ e\right\} $,
every $\varphi \in \mathrm{Aut}k$ acts on the algebra $k\mathbf{S}_{\mathbf{2%
}}$ by natural way: $\varphi \left( ae+b\left( 12\right) \right) =\varphi
\left( a\right) e+\varphi \left( b\right) \left( 12\right) $. But there is
an isomorphism of groups%
\begin{equation*}
U\left( k\mathbf{S}_{\mathbf{2}}\right) \ni ae+b\left( 12\right) \rightarrow
\left( a+b,a-b\right) \in k^{\ast }\times k^{\ast }
\end{equation*}%
so there is isomorphism%
\begin{equation*}
U\left( kS_{2}\right) /U\left( k\left\{ e\right\} \right) =U\left(
kS_{2}\right) /k^{\ast }e\ni \left( ae+b\left( 12\right) \right) k^{\ast
}e\rightarrow \frac{a+b}{a-b}\in k^{\ast }.
\end{equation*}%
Hence we prove the

\begin{theorem}
\label{group_for_power_associative copy(1)}For variety of the all power
associative algebras 
\begin{equation*}
\mathfrak{A/Y\cong }k^{\ast }\mathfrak{\leftthreetimes }\mathrm{Aut}k
\end{equation*}%
holds.
\end{theorem}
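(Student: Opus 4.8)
The plan is to assemble the pieces already in place: by the decomposition (\ref{decomp}) we have $\mathfrak{A/Y}\cong\mathfrak{S}/(\mathfrak{S}\cap\mathfrak{Y})$, so it suffices to realise $\mathfrak{S}$ and $\mathfrak{S}\cap\mathfrak{Y}$ as explicit groups and compute the quotient. The analysis above shows that, for the variety of all power associative algebras, a strongly stable automorphism $\Phi$ corresponds to a system of words $W$ determined by a triple $(\varphi,\alpha_{1,2},\alpha_{2,1})$ with $\varphi\in\mathrm{Aut}k$, $\alpha_{1,2},\alpha_{2,1}\in k$ and $\alpha_{1,2}\neq\pm\alpha_{2,1}$, via $w_{\lambda}(x)=\varphi(\lambda)x$, $w_{+}(x_{1},x_{2})=x_{1}+x_{2}$ and $w_{\cdot}(x_{1},x_{2})=\alpha_{1,2}x_{1}x_{2}+\alpha_{2,1}x_{2}x_{1}$. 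First I would reinterpret this parameter set: setting $\beta=\alpha_{1,2}e+\alpha_{2,1}(12)\in k\mathbf{S}_{\mathbf{2}}$, and using that $\mathrm{char}\,k=0$ so that $k\mathbf{S}_{\mathbf{2}}\cong k\times k$ through the idempotents $\tfrac{1}{2}(e\pm(12))$, the condition $\alpha_{1,2}\neq\pm\alpha_{2,1}$ is precisely invertibility of $\beta$; thus the underlying set of $\mathfrak{S}$ is $U(k\mathbf{S}_{\mathbf{2}})\times\mathrm{Aut}k$.

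Next I would compute the group law. Composition of strongly stable automorphisms corresponds, through (\ref{autom_bijections}) and (\ref{der_veb_opr}), to substituting one system of verbal operations into the other; carrying out this substitution on $w_{\lambda}$ and $w_{\cdot}$ (and keeping $w_{+}=x_{1}+x_{2}$) shows by a short direct calculation that the composite of the automorphisms with data $(\varphi_{1},\beta_{1})$ and $(\varphi_{2},\beta_{2})$ has data $\bigl(\varphi_{1}\varphi_{2},\ \beta_{1}\cdot\varphi_{1}(\beta_{2})\bigr)$, where $\cdot$ is the product in $k\mathbf{S}_{\mathbf{2}}$ and each $\varphi\in\mathrm{Aut}k$ is extended to $k\mathbf{S}_{\mathbf{2}}$ coefficient-wise by $\varphi(ae+b(12))=\varphi(a)e+\varphi(b)(12)$. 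This identifies $\mathfrak{S}$ with the semidirect product $U(k\mathbf{S}_{\mathbf{2}})\leftthreetimes\mathrm{Aut}k$ for this action. By the innerness criterion recorded above, namely that $\Phi$ is inner if and only if $\alpha_{2,1}=0$ and $\varphi=id_{k}$, the intersection $\mathfrak{S}\cap\mathfrak{Y}$ is the subgroup $U(k\{e\})\times\{id_{k}\}$, and it is normal in $\mathfrak{S}$ because $\mathfrak{Y}$ is normal in $\mathfrak{A}$.

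Finally I would pass to the quotient. Since $U(k\{e\})=k^{\ast}e$ is invariant under the coefficient-wise $\mathrm{Aut}k$-action and $U(k\mathbf{S}_{\mathbf{2}})$ is abelian, the assignment $(\beta,\varphi)\mapsto(\beta\,U(k\{e\}),\varphi)$ is a surjective homomorphism $U(k\mathbf{S}_{\mathbf{2}})\leftthreetimes\mathrm{Aut}k\to\bigl(U(k\mathbf{S}_{\mathbf{2}})/U(k\{e\})\bigr)\leftthreetimes\mathrm{Aut}k$ with kernel $U(k\{e\})\times\{id_{k}\}$, so $\mathfrak{A/Y}\cong\bigl(U(k\mathbf{S}_{\mathbf{2}})/U(k\{e\})\bigr)\leftthreetimes\mathrm{Aut}k$. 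I would then feed in the isomorphism $U(k\mathbf{S}_{\mathbf{2}})\cong k^{\ast}\times k^{\ast}$ exhibited before the statement: it sends $U(k\{e\})$ to the diagonal, hence induces $U(k\mathbf{S}_{\mathbf{2}})/U(k\{e\})\cong k^{\ast}$ via $(ae+b(12))U(k\{e\})\mapsto\frac{a+b}{a-b}$, and it turns the coefficient-wise $\mathrm{Aut}k$-action into the componentwise one on $k^{\ast}\times k^{\ast}$, which descends to the tautological action of $\mathrm{Aut}k$ on $k^{\ast}$; composing the isomorphisms gives $\mathfrak{A/Y}\cong k^{\ast}\leftthreetimes\mathrm{Aut}k$. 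I expect the only delicate point to be keeping track of the $\mathrm{Aut}k$-action through these three successive identifications, so that the final semidirect product is by the natural action on $k^{\ast}$; the group-law computation and everything after it are routine and go exactly as in \cite{Tsurkov}.
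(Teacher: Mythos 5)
Your proposal is correct and follows essentially the same route as the paper: it uses $\mathfrak{A/Y}\cong\mathfrak{S}/(\mathfrak{S\cap Y})$, the already-established classification of the systems of words $w_{\lambda}(x)=\varphi(\lambda)x$, $w_{+}=x_{1}+x_{2}$, $w_{\cdot}=\alpha_{1,2}x_{1}x_{2}+\alpha_{2,1}x_{2}x_{1}$ with $\alpha_{1,2}\neq\pm\alpha_{2,1}$, the innerness condition $\alpha_{2,1}=0$, $\varphi=id_{k}$, and then the identification $\mathfrak{S}\cong U\left(k\mathbf{S}_{\mathbf{2}}\right)\mathfrak{\leftthreetimes}\mathrm{Aut}k$, $\mathfrak{S\cap Y}\cong U\left(k\left\{e\right\}\right)$, and the isomorphism $U\left(k\mathbf{S}_{\mathbf{2}}\right)/U\left(k\left\{e\right\}\right)\cong k^{\ast}$ via $\left(ae+b\left(12\right)\right)\mapsto\frac{a+b}{a-b}$, exactly as the paper does (merely making explicit the computations the paper delegates to \cite{Tsurkov}). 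The group-law formula $\left(\varphi_{1}\varphi_{2},\beta_{1}\varphi_{1}\left(\beta_{2}\right)\right)$ you state matches the composition law (\ref{homom_mult_3}) specialized to this setting, so no gap remains.
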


By similar way we prove

\begin{theorem}
\label{group_for_alternative}For the variety of the all alternative algebras%
\begin{equation*}
\mathfrak{A/Y\cong }\mathbf{S}_{\mathbf{2}}\times \mathrm{Aut}k
\end{equation*}%
holds.
\end{theorem}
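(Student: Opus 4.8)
The plan is to run the argument in exact parallel with the one just given for the variety of all power associative algebras, restricting attention to the smaller family of admissible systems of words found above for alternative algebras. By the analysis above, a strongly stable automorphism $\Phi $ of $\Theta ^{0}$ for the variety of all alternative algebras corresponds to a system of words $W$ with $w_{0}=0$, $w_{+}\left( x_{1},x_{2}\right) =x_{1}+x_{2}$, $w_{\lambda }\left( x\right) =\varphi \left( \lambda \right) x$ for some $\varphi \in \mathrm{Aut}k$, and a multiplication word of the form (\ref{altern_mult_1}) or of the form (\ref{altern_mult_2}) with the coefficient occurring in it lying in $k^{\ast }$; moreover the verification of Op2) carried out above shows that every such $W$ is indeed realized. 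Writing, as in the power associative case, the straight product $x_{1}x_{2}$ as the unit $e$ and the flipped product $x_{2}x_{1}$ as the transposition $\left( 12\right) $ of the group algebra $k\mathbf{S}_{\mathbf{2}}$, the multiplication word of $W$ is then a monomial invertible element of $k\mathbf{S}_{\mathbf{2}}$, i.e. an element of $M=k^{\ast }e\cup k^{\ast }\left( 12\right) $. Since $\left( ae\right) \left( be\right) =ab\,e$, $\left( ae\right) \left( b\left( 12\right) \right) =ab\left( 12\right) $ and $\left( a\left( 12\right) \right) \left( b\left( 12\right) \right) =ab\,e$, the set $M$ is an abelian subgroup of $U\left( k\mathbf{S}_{\mathbf{2}}\right) $, isomorphic to $k^{\ast }\times \mathbf{S}_{\mathbf{2}}$. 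Reading off the composition law of strongly stable automorphisms as in \cite{Tsurkov} (substitution of one system of words into the other), I would record that $\mathfrak{S}\cong M\leftthreetimes \mathrm{Aut}k$, where $\varphi \in \mathrm{Aut}k$ acts by $\varphi \left( ae\right) =\varphi \left( a\right) e$ and $\varphi \left( b\left( 12\right) \right) =\varphi \left( b\right) \left( 12\right) $.

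Next I would compute $\mathfrak{S}\cap \mathfrak{Y}$ from the criterion for innerness established above (valid for all the varieties under consideration): $\Phi $ is inner precisely when $\varphi =id_{k}$ and the multiplication word is of the straight form (\ref{altern_mult_1}). Hence $\mathfrak{S}\cap \mathfrak{Y}$ is exactly the set of automorphisms parametrized by $k^{\ast }e\times \left\{ id_{k}\right\} $, so $\mathfrak{S}\cap \mathfrak{Y}\cong U\left( k\left\{ e\right\} \right) =k^{\ast }e$, the intersection of $M$ with the inner subgroup already found for power associative algebras. Because $\mathfrak{Y}$ is normal in $\mathfrak{A}$ (equivalently: $k^{\ast }e$ is normal in the abelian group $M$ and stable under $\mathrm{Aut}k$), the subgroup $\mathfrak{S}\cap \mathfrak{Y}$ is normal in $\mathfrak{S}$, and from the decomposition $\mathfrak{A=YS}$ we get $\mathfrak{A/Y}\cong \mathfrak{S}/\left( \mathfrak{S}\cap \mathfrak{Y}\right) \cong \left( M/k^{\ast }e\right) \leftthreetimes \mathrm{Aut}k$.

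Finally I would identify $M/k^{\ast }e$. Restricting to $M$ the isomorphism $U\left( k\mathbf{S}_{\mathbf{2}}\right) /U\left( k\left\{ e\right\} \right) \ni \left( ae+b\left( 12\right) \right) k^{\ast }e\mapsto \frac{a+b}{a-b}\in k^{\ast }$ used in the preceding theorem, the class of $e$ goes to $1$ and the class of $\left( 12\right) $ goes to $-1$, so $M/k^{\ast }e\cong \left\{ 1,-1\right\} \cong \mathbf{S}_{\mathbf{2}}$; and every $\varphi \in \mathrm{Aut}k$ fixes $\pm 1$, so $\mathrm{Aut}k$ acts trivially on $M/k^{\ast }e$. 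Therefore $\mathfrak{A/Y}\cong \mathbf{S}_{\mathbf{2}}\leftthreetimes \mathrm{Aut}k=\mathbf{S}_{\mathbf{2}}\times \mathrm{Aut}k$, which is the assertion. The only point that requires genuine care — exactly as in the power associative case — is to check that the correspondence sending an admissible system of words to its parameter in $M\leftthreetimes \mathrm{Aut}k$ is a group isomorphism, i.e. that composition of the associated strongly stable automorphisms corresponds to the stated product on $M\leftthreetimes \mathrm{Aut}k$; this is the verbatim restriction to monomial elements of the computation already carried out for power associative algebras in \cite{Tsurkov} and in the preceding theorem, so it brings in nothing new.
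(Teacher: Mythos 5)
Your proposal is correct and follows essentially the same route as the paper: the paper's own proof of this theorem is precisely the power-associative argument rerun with the admissible multiplication words restricted to the monomial forms (\ref{altern_mult_1}) and (\ref{altern_mult_2}), the innerness criterion $\alpha_{2,1}=0$, $\varphi=id_{k}$, and the identification of the quotient inside $U\left(k\mathbf{S_{2}}\right)/U\left(k\left\{e\right\}\right)$ as $\left\{\pm1\right\}\cong\mathbf{S_{2}}$ with trivial $\mathrm{Aut}k$-action. Your write-up just makes explicit the steps the paper compresses into ``by similar way we prove.''
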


And for other considered varieties we achieve

\begin{theorem}
\label{group_for_commutative}For the variety of the all commutative algebras%
\begin{equation*}
\mathfrak{A/Y\cong }\mathrm{Aut}k
\end{equation*}%
holds.
\end{theorem}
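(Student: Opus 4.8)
The plan is to combine the already-established structural facts into the final quotient computation for the commutative case. By the analysis in this section, an arbitrary strongly stable automorphism $\Phi$ of $\Theta^{0}$ corresponds to a system of words $W$ with $w_{0}=0$, $w_{\lambda}(x)=\varphi(\lambda)x$ for $\varphi\in\mathrm{Aut}\,k$, $w_{+}(x_{1},x_{2})=x_{1}+x_{2}$, and — since the commutative variety forces the shape \eqref{commut_mult} — $w_{\cdot}(x_{1},x_{2})=\alpha_{1,2}x_{1}x_{2}$ with $\alpha_{1,2}\in k^{\ast}$. The verification that every such $W$ does satisfy Op1) and Op2) was carried out above (the only nontrivial axioms being bilinearity of $\times$ and commutativity $x_{1}\times x_{2}=x_{2}\times x_{1}$, both immediate from the form of $w_{\cdot}$). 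Hence $\mathfrak{S}$ is parametrized exactly by pairs $(\varphi,\alpha_{1,2})\in\mathrm{Aut}\,k\times k^{\ast}$.

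The next step is to describe the group law on this parametrization and then factor out $\mathfrak{S}\cap\mathfrak{Y}$. Composing two strongly stable automorphisms multiplies the systems of words by substitution, which gives the semidirect-product multiplication $(\varphi_{1},\alpha_{1})(\varphi_{2},\alpha_{2})=(\varphi_{1}\varphi_{2},\,\alpha_{1}\,\varphi_{1}(\alpha_{2}))$, so $\mathfrak{S}\cong k^{\ast}\leftthreetimes\mathrm{Aut}\,k$. By the criterion quoted in this section (proved as in \cite{Tsurkov}), $\Phi$ is inner precisely when $\varphi=id_{k}$ and the "twisting" coefficient vanishes — but in the commutative case there is no $\alpha_{2,1}$, so the condition reduces to $\varphi=id_{k}$ alone, with $\alpha_{1,2}\in k^{\ast}$ arbitrary. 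Thus $\mathfrak{S}\cap\mathfrak{Y}$ is exactly the subgroup $\{(id_{k},\alpha)\mid\alpha\in k^{\ast}\}\cong k^{\ast}$, which is the kernel of the projection $(\varphi,\alpha)\mapsto\varphi$.

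Finally, using $\mathfrak{A/Y}\cong\mathfrak{S/S}\cap\mathfrak{Y}$ from Section \ref{intro}, the projection onto the first coordinate yields a surjection $\mathfrak{S}\to\mathrm{Aut}\,k$ with kernel $\mathfrak{S}\cap\mathfrak{Y}$, whence $\mathfrak{A/Y}\cong\mathrm{Aut}\,k$. The main obstacle I anticipate is bookkeeping rather than conceptual: making sure the "inner $\iff\varphi=id_{k}$" reduction is correctly transcribed from the general criterion (checking that in the commutative setting the isomorphisms $c_{F}:F\to F_{W}^{\ast}$ of the criterion genuinely exist for every $\alpha_{1,2}$ — one can take $c_{F}$ to act as the appropriate scalar on each homogeneous component $F_{i}$, namely multiplication by $\alpha_{1,2}^{\,i-1}$, and verify $c_{F}\psi=\psi c_{D}$ degree by degree), and confirming that the composition law genuinely produces the stated semidirect action. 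Once those points are pinned down, the theorem follows immediately.
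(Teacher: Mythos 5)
Your overall route is the paper's route: parametrize $\mathfrak{S}$ by pairs $(\varphi,\alpha_{1,2})\in\mathrm{Aut}k\times k^{\ast}$ via the words (\ref{scalar_mult}), (\ref{addition}), (\ref{commut_mult}), identify $\mathfrak{S}\cap\mathfrak{Y}$ with the pairs having $\varphi=id_{k}$, and pass to the quotient $\mathfrak{A/Y\cong S/S\cap Y}\cong\mathrm{Aut}k$. The one step that fails as written is your explicit witness for innerness. The map $c_{F}$ that multiplies the homogeneous component $F_{i}$ by $\alpha_{1,2}^{i-1}$ is indeed a homomorphism $F\rightarrow F_{W}^{\ast}$ — when $\varphi=id_{k}$ it is precisely the isomorphism $\sigma_{F}$ of condition Op2), which fixes the generators — but it does not satisfy the naturality condition $c_{F}\psi=\psi c_{D}$ of Criterion \ref{inner_stable}, because morphisms of $\Theta^{0}$ need not preserve the grading. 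Take $D=F\left( y\right) $, $F=F\left( x\right) $ and the homomorphism $\psi$ with $\psi\left( y\right) =x+x^{2}$: then $c_{F}\psi\left( y\right) =x+\alpha_{1,2}x^{2}$ while $\psi c_{D}\left( y\right) =\psi\left( y\right) =x+x^{2}$, so the required identity fails whenever $\alpha_{1,2}\neq 1$. Hence "checking $c_{F}\psi=\psi c_{D}$ degree by degree" cannot succeed: the family $\left\{ c_{F}\right\} $ must commute with arbitrary homomorphisms, not only graded ones, and your proposed family is the Op2) family $\left\{ \sigma_{F}\right\}$, which satisfies B1) but not this stronger naturality.

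The repair is immediate, and afterwards your argument coincides with the paper's. Take the constant homothety $c_{F}=\alpha_{1,2}^{-1}id_{F}$. Since $w_{+}\left( x_{1},x_{2}\right) =x_{1}+x_{2}$, $w_{\lambda}\left( x\right) =\lambda x$ (because $\varphi=id_{k}$) and $w_{\cdot}\left( x_{1},x_{2}\right) =\alpha_{1,2}x_{1}x_{2}$, one has $c_{F}\left( f_{1}f_{2}\right) =\alpha_{1,2}^{-1}f_{1}f_{2}=\alpha_{1,2}\left( \alpha_{1,2}^{-1}f_{1}\right) \left( \alpha_{1,2}^{-1}f_{2}\right) =c_{F}\left( f_{1}\right) \times c_{F}\left( f_{2}\right) $, and additivity and the action of scalars are clear, so $c_{F}:F\rightarrow F_{W}^{\ast}$ is an isomorphism; moreover $c_{F}\psi=\psi c_{D}$ for every morphism $\psi$ simply because $\psi$ is $k$-linear. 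This gives the sufficiency of $\varphi=id_{k}$; the necessity is quoted from \cite{Tsurkov}, exactly as the paper does. With that, your computation — $\mathfrak{S}\cong k^{\ast}\mathfrak{\leftthreetimes}\mathrm{Aut}k$ with the stated composition law, $\mathfrak{S}\cap\mathfrak{Y}\cong k^{\ast}$ equal to the kernel of the projection onto $\mathrm{Aut}k$, hence $\mathfrak{A/Y\cong}\mathrm{Aut}k$ — is the paper's argument specialised from the power associative case, where the group algebra $k\mathbf{S}_{\mathbf{2}}$ degenerates to the single parameter $\alpha_{1,2}$.
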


\begin{theorem}
\label{groupe_for_Jordan}For the variety of the all Jordan algebras 
\begin{equation*}
\mathfrak{A/Y\cong }\mathrm{Aut}k
\end{equation*}%
holds.
\end{theorem}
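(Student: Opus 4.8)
The plan is to specialize the general computation of the preceding section to the variety $\Theta$ of all Jordan algebras: I would describe the group $\mathfrak{S}$ of strongly stable automorphisms of $\Theta^{0}$ by listing all admissible systems of words $W$, equip $\mathfrak{S}$ with its group law, cut out the subgroup $\mathfrak{S}\cap\mathfrak{Y}$, and conclude via $\mathfrak{A/Y}\cong\mathfrak{S}/(\mathfrak{S}\cap\mathfrak{Y})$, which is available from~\eqref{decomp} since $\mathfrak{Y}$ is normal in $\mathfrak{A}$.

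First I would pin down the shape of $W$. Because the Jordan algebras form a subvariety of the power associative algebras, $F(x)$ is the algebra of polynomials of degree $\geq 1$, so, as recalled in the text, $w_{0}=0$, $w_{\lambda}(x)=\varphi(\lambda)x$ with $\varphi\in\mathrm{Aut}k$, and $w_{+}(x_{1},x_{2})=x_{1}+x_{2}$; and since Jordan algebras are commutative, \eqref{commut_mult} applies, $w_{\cdot}(x_{1},x_{2})=\alpha x_{1}x_{2}$ with $\alpha\in k^{\ast}$. No relation is imposed between $\alpha$ and $\varphi$: the Jordan identity has no scalar coefficients, and since passing from $\cdot$ to $\times$, where $h_{1}\times h_{2}=\alpha h_{1}h_{2}$, turns a monomial of degree $n$ into $\alpha^{n-1}$ times itself, the identity $((x_{1}\times x_{1})\times x_{2})\times x_{1}=(x_{1}\times x_{1})\times(x_{2}\times x_{1})$ holds in $H_{W}^{\ast}$ precisely because it is $\alpha^{3}$ times the (valid) Jordan identity in $H$. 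Conversely, for every $(\varphi,\alpha)\in\mathrm{Aut}k\times k^{\ast}$ the system $W$ built from it fulfils Op1) and Op2): Op1) is clear, and for Op2) one checks in $H_{W}^{\ast}$ only the non-formal axioms --- the two distributive laws, commutativity of $\times$, and the Jordan identity --- by the direct degree-counting calculations already indicated, after which $\sigma_{F}$ is an isomorphism as in \cite{Tsurkov}. Via the two correspondences of Section~\ref{operations} this identifies $\mathfrak{S}$ with the set $\mathrm{Aut}k\times k^{\ast}$.

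Next I would transport the group structure. Composing strongly stable automorphisms composes their systems of bijections, $s_{F}^{\Phi_{1}\Phi_{2}}=s_{F}^{\Phi_{1}}s_{F}^{\Phi_{2}}$, and substituting this into~\eqref{der_veb_opr} shows that if $\Phi_{i}$ has data $(\varphi_{i},\alpha_{i})$ then $\Phi_{1}\Phi_{2}$ has data $(\varphi_{1}\varphi_{2},\,\alpha_{1}\varphi_{1}(\alpha_{2}))$; hence $\mathfrak{S}\cong k^{\ast}\leftthreetimes\mathrm{Aut}k$ with the tautological action of field automorphisms on $k^{\ast}$. Then, by Criterion~\ref{inner_stable} --- equivalently, the criterion recalled in the text that such a $\Phi$ is inner iff $\alpha_{2,1}=0$ and $\varphi=\mathrm{id}_{k}$, the first condition being vacuous here --- the subgroup $\mathfrak{S}\cap\mathfrak{Y}$ consists exactly of the $(\mathrm{id}_{k},\alpha)$ with $\alpha\in k^{\ast}$, i.e.\ of the distinguished normal copy of $k^{\ast}$ inside $k^{\ast}\leftthreetimes\mathrm{Aut}k$. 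Therefore $\mathfrak{A/Y}\cong\mathfrak{S}/(\mathfrak{S}\cap\mathfrak{Y})\cong(k^{\ast}\leftthreetimes\mathrm{Aut}k)/k^{\ast}\cong\mathrm{Aut}k$.

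The routine part is the Op2) verification (the degree-counting ``direct calculations''). The step needing the most care --- the main obstacle --- is the precise determination of $\mathfrak{S}\cap\mathfrak{Y}$: one must show that every $\Phi$ with $\varphi\neq\mathrm{id}_{k}$ fails the criterion, i.e.\ that no natural family $c_{F}:F\to F_{W}^{\ast}$ exists. Naturality against the homomorphisms out of $D=F(x)$ forces $c_{F}$ to be the ``evaluation'' of the single element $c_{D}(x)\in F(x)$; compatibility of $c_{F}$ with the scalar operation of $F_{W}^{\ast}$ then forces $c_{D}(x)$ to be homogeneous of some degree $n$ with $\lambda^{n}=\varphi(\lambda)$ for all $\lambda\in k$, whence $n=1$ and $\varphi=\mathrm{id}_{k}$; conversely, when $\varphi=\mathrm{id}_{k}$ the multiplier $\alpha$ is harmless, since $c_{F}=\alpha^{-1}\mathrm{id}_{F}$ is then a natural isomorphism $F\to F_{W}^{\ast}$. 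This is precisely why the whole $k^{\ast}$ collapses in passing to the quotient, leaving $\mathrm{Aut}k$.
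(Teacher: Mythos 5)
Your proposal is correct and follows essentially the same route as the paper: classify the admissible word systems ($w_{0}=0$, $w_{\lambda}(x)=\varphi(\lambda)x$ with $\varphi\in\mathrm{Aut}\,k$, $w_{+}=x_{1}+x_{2}$, $w_{\cdot}=\alpha x_{1}x_{2}$, $\alpha\in k^{\ast}$), verify Op2) by the degree-counting calculations, identify $\mathfrak{S}\cap\mathfrak{Y}$ as the automorphisms with $\varphi=id_{k}$, and conclude via $\mathfrak{A/Y}\cong\mathfrak{S}/(\mathfrak{S}\cap\mathfrak{Y})\cong\mathrm{Aut}\,k$. The only difference is that you spell out the innerness criterion argument (both directions, including $c_{F}=\alpha^{-1}id_{F}$ for sufficiency) which the paper delegates to \cite{Tsurkov}; your sketch of it is sound.
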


\begin{theorem}
\label{group_for_anticommutative}For the arbitrary subvariety defined by
identities with coefficients from $%
\mathbb{Z}
$ of the variety of the all anticommutative algebras 
\begin{equation*}
\mathfrak{A/Y\cong }\mathrm{Aut}k
\end{equation*}%
holds.
\end{theorem}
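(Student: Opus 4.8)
The plan is to compute $\mathfrak{S}$ and its normal subgroup $\mathfrak{S}\cap\mathfrak{Y}$ explicitly, and then to use that, by (\ref{decomp}) and the normality of $\mathfrak{Y}$ in $\mathfrak{A}$, one has $\mathfrak{A/Y}\cong\mathfrak{S}/(\mathfrak{S}\cap\mathfrak{Y})$. First I would record what the work of Section \ref{operations_in_linear_alg} already gives for such a subvariety: a system of words $W$ as in (\ref{words_list}) corresponds to a strongly stable automorphism of $\Theta^0$ if and only if $w_0=0$, $w_\lambda(x)=\varphi(\lambda)x$ for some $\varphi\in\mathrm{Aut}k$, $w_+(x_1,x_2)=x_1+x_2$, and $w_\cdot(x_1,x_2)=\alpha_{1,2}x_1x_2$ with $\alpha_{1,2}\in k^\ast$ (formula (\ref{commut_mult})), all of Op1), Op2) having been verified above. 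Thus $\Phi\mapsto(\varphi,\alpha_{1,2})$ is a bijection of $\mathfrak{S}$ onto $\mathrm{Aut}k\times k^\ast$.

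Next I would determine the group law under this parametrization. If $\Phi_1,\Phi_2\in\mathfrak{S}$ have systems of bijections $\{\sigma^{(1)}_F\}$ and $\{\sigma^{(2)}_F\}$, then from (\ref{biject_action}) the composite $\Phi_1\Phi_2$ has system of bijections $\{\sigma^{(1)}_F\sigma^{(2)}_F\}$. By Op2) each $\sigma^{(i)}_F\colon F\to F^\ast_W$ is $\varphi_i$-semilinear and satisfies $\sigma^{(i)}_F(f_1f_2)=\alpha^{(i)}_{1,2}\sigma^{(i)}_F(f_1)\sigma^{(i)}_F(f_2)$; feeding $\sigma^{(1)}_F\sigma^{(2)}_F$ into formula (\ref{der_veb_opr}) and using $\sigma^{(i)}_F\mid_X=\mathrm{id}_X$ shows that $\Phi_1\Phi_2$ corresponds to the pair $(\varphi_1\varphi_2,\ \alpha^{(1)}_{1,2}\varphi_1(\alpha^{(2)}_{1,2}))$. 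Hence the group operation on $\mathrm{Aut}k\times k^\ast$ is the semidirect-product law, and $\mathfrak{S}\cong k^\ast\leftthreetimes\mathrm{Aut}k$ with $\mathrm{Aut}k$ acting on $k^\ast$ in the natural way.

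Finally I would locate $\mathfrak{S}\cap\mathfrak{Y}$. By the criterion recalled above (deduced from Criterion \ref{inner_stable} as in \cite{Tsurkov}), the automorphism $\Phi$ attached to $W$ is inner if and only if $\alpha_{2,1}=0$ and $\varphi=\mathrm{id}_k$; for the present varieties the first condition is vacuous, since in (\ref{commut_mult}) the word $w_\cdot$ carries no $x_2x_1$ term, so $\Phi$ is inner exactly when $\varphi=\mathrm{id}_k$. Therefore $\mathfrak{S}\cap\mathfrak{Y}=\{(\mathrm{id}_k,\alpha)\mid\alpha\in k^\ast\}$, which is precisely the normal factor $k^\ast$ of $\mathfrak{S}\cong k^\ast\leftthreetimes\mathrm{Aut}k$. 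Consequently $\mathfrak{A/Y}\cong\mathfrak{S}/(\mathfrak{S}\cap\mathfrak{Y})\cong(k^\ast\leftthreetimes\mathrm{Aut}k)/k^\ast\cong\mathrm{Aut}k$.

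The step I expect to be the genuine obstacle is the input that has been imported "as in \cite{Tsurkov}": that the listed systems $W$ really satisfy Op2) (equivalently that $H^\ast_W\in\Theta$ for all $H\in\Theta$ and that $\sigma_F$ is an isomorphism), and that the inner ones are exactly those with $\varphi=\mathrm{id}_k$. Granting these, the remainder is the routine transport of the scalar and multiplication operations through $\sigma_F$ and reading off the semidirect structure; the specific anticommutative identities of the subvariety enter only through the already-performed check that $H^\ast_W$ stays in $\Theta$.
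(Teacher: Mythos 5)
Your proposal is correct and follows essentially the same route as the paper: it uses the already-established forms $w_\lambda(x)=\varphi(\lambda)x$, $w_+=x_1+x_2$, $w_\cdot=\alpha_{1,2}x_1x_2$ (with Op1), Op2) verified), the identification $\mathfrak{A/Y}\cong\mathfrak{S}/(\mathfrak{S}\cap\mathfrak{Y})$, and the criterion that $\Phi$ is inner exactly when $\varphi=id_k$ (the condition $\alpha_{2,1}=0$ being vacuous here), which is precisely how the paper, deferring the details to \cite{Tsurkov}, obtains $\mathfrak{A/Y}\cong\mathrm{Aut}k$. Your explicit computation of the group law $(\varphi_1,\alpha^{(1)}_{1,2})(\varphi_2,\alpha^{(2)}_{1,2})=(\varphi_1\varphi_2,\ \alpha^{(1)}_{1,2}\varphi_1(\alpha^{(2)}_{1,2}))$ is the same semidirect-product bookkeeping the paper imports from \cite{Tsurkov}, so there is no substantive difference in approach.
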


\section{Varieties of nilpotent algebras.\label{Nilp}}

\setcounter{equation}{0}

We denote by $\mathfrak{P}_{n}$ the set of all arrangements of the brackets
on monomial which has $n$ factors. For every $\mathfrak{p\in P}_{n}$ we
denote by $\mathfrak{p}\left( x_{i_{1}},\ldots ,x_{i_{n}}\right) $ the
monomial from absolutely free linear algebra $\widetilde{F}\left( X\right) $%
, where $\left\{ x_{i_{1}},\ldots ,x_{i_{n}}\right\} \subset X$. This
monomial we obtain by putting brackets according the arrangement $\mathfrak{p%
}$ on associative word $x_{i_{1}}\ldots x_{i_{n}}$. In this notation the
variety of all nilpotent linear algebras of degree not more than $n$ is a
variety of linear algebras defined by identities 
\begin{equation}
\mathfrak{p}\left( x_{1},\ldots ,x_{n}\right) =0,  \label{nilp_identities}
\end{equation}%
where $\mathfrak{p\in P}_{n}$. These varieties we denote in this section by $%
\Theta _{n}$, where $n\geq 3$. The categories of the finitely generated free
algebras of these varieties we denote by $\Theta _{n}^{0}$. By $\mathfrak{A}%
_{n}$ we denote the group of the all automorphisms of the category $\Theta
_{n}^{0}$, by $\mathfrak{Y}_{n}$ the group of the all inner automorphisms of
this category and by $\mathfrak{S}_{n}$ the group of the all strongly stable
automorphisms of the category\textit{\ }$\Theta _{n}^{0}$

\subsection{Connection between groups $\mathfrak{A}_{n}\mathfrak{/Y}_{n}$ of
the categories $\Theta _{n}^{0}$.}

In this subsection we develop the method of \cite[Section 2]%
{TsurkovNilpGroup}.

\begin{proposition}
\label{F*i=Fi}If $F\in \mathrm{Ob}\Theta _{n}^{0}$ and the system of words $%
W $ fulfills conditions Op1) and Op2) in $\Theta _{n}^{0}$ then $%
F^{i}=\left( F_{W}^{\ast }\right) ^{i}$ holds for $1\leq i\leq n-1$.
\end{proposition}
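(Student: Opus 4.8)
The plan is to exploit the fact that the system of words $W$ is given explicitly by the forms derived in Section 4: $w_{0}=0$, $w_{\lambda}(x)=\varphi(\lambda)x$ with $\varphi\in\mathrm{Aut}k$, $w_{+}(x_{1},x_{2})=x_{1}+x_{2}$, and $w_{\cdot}(x_{1},x_{2})=\alpha_{1,2}x_{1}x_{2}+\alpha_{2,1}x_{2}x_{1}$ lying in $F_{2}(x_{1},x_{2})$ — the same derivation goes through verbatim for the varieties $\Theta_{n}$, since the arguments used only multi-homogeneity of the variety and the axioms of linear algebra, together with the fact that in $\Theta_{n}$ the algebra $F(x)$ is still spanned by $x$ alone (as $x^{2}=0$ for $n\geq 3$), so that (\ref{scalar_mult}) holds. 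In particular, I would first note that the verbal multiplication $\times$ sends $F_{i}\times F_{j}$ into $F_{i+j}$, because $w_{\cdot}$ is homogeneous of degree $2$: if $f\in F_{i}$ and $g\in F_{j}$ then $f\times g=\alpha_{1,2}fg+\alpha_{2,1}gf\in F_{i+j}$. Since $F=F_{W}^{\ast}$ as a set and the underlying additive and scalar structure is unchanged (by (\ref{addition}) and (\ref{scalar_mult})), the linear-space decomposition $F=\bigoplus_{i\geq 1}F_{i}$ is simultaneously the multi-homogeneous decomposition of $F_{W}^{\ast}$.

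Next I would argue by induction on $i$ that $(F_{W}^{\ast})^{i}=F^{i}$ for $1\leq i\leq n-1$. The base case $i=1$ is trivial since both sides equal $F$. For the inductive step, recall $F^{i+1}$ is the ideal generated by products of length $i+1$; concretely, using multi-homogeneity, $F^{i}=\bigoplus_{j\geq i}F_{j}$ and similarly $(F_{W}^{\ast})^{i}=\bigoplus_{j\geq i}(F_{W}^{\ast})_{j}$, but $(F_{W}^{\ast})_{j}=F_{j}$ by the previous paragraph, giving the claim directly. Alternatively, without invoking that $F^{i}=\bigoplus_{j\geq i}F_{j}$, one shows both inclusions: $(F_{W}^{\ast})^{i}\subseteq F^{i}$ because every $\times$-product of $i$ elements of $F$ expands (via $w_{\cdot}$) into a $k$-linear combination of ordinary products of $i$ elements, hence lies in $F^{i}$; and $F^{i}\subseteq(F_{W}^{\ast})^{i}$ by the symmetric argument applied to the inverse system of words $W'$ corresponding to $\sigma_{F}^{-1}$ (here one uses condition Op2) to invert, or equivalently uses that $\sigma_{F}:F\to F_{W}^{\ast}$ is an isomorphism carrying $F^{i}$ onto $(F_{W}^{\ast})^{i}$ and fixing generators, so it fixes the whole filtration). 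The restriction $i\leq n-1$ is exactly what guarantees $F^{i}\neq\{0\}$ is meaningful and the degree bookkeeping stays inside the truncated algebra; for $i\geq n$ both ideals vanish anyway by (\ref{nilp_identities}).

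The main obstacle, such as it is, is making sure the degree-counting argument for $w_{\cdot}$ is airtight: one must confirm that $w_{\cdot}\in F_{2}(x_{1},x_{2})$ — i.e. that the word defining the new multiplication is genuinely homogeneous of total degree $2$ — also in the nilpotent setting, which follows from the axioms $0\times x_{2}=x_{1}\times 0=0$ and $\lambda\ast(x_{1}\times x_{2})=(\lambda\ast x_{1})\times x_{2}$ exactly as in Section 4, noting that in $F(x_{1},x_{2})\in\mathrm{Ob}\,\Theta_{n}^{0}$ all components $F_{j}$ with $j\geq n$ already vanish, so no degree-$\geq n$ terms can appear to spoil the argument. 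Once that is in place, the filtration $\{F^{i}\}$ is intrinsic to the linear-space-plus-grading data that $F$ and $F_{W}^{\ast}$ share, and the proposition follows.
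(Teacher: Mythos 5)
Your proof rests on a premise that is false in the nilpotent varieties $\Theta _{n}$: you assume that the clean forms of Section 4 ($w_{+}=x_{1}+x_{2}$, $w_{\lambda }\left( x\right) =\varphi \left( \lambda \right) x$, $w_{\cdot }$ homogeneous of degree $2$) carry over verbatim, justifying this by ``$x^{2}=0$ for $n\geq 3$'' and ``$F\left( x\right) $ is spanned by $x$ alone''. Nilpotency of degree at most $n$ kills products of $n$ factors, not squares; in $\Theta _{3}$ the free algebra $F\left( x\right) $ is spanned by $x$ and $x^{2}$, and in $\Theta _{4}$ also by $x\left( x^{2}\right) $ and $\left( x^{2}\right) x$. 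The paper's own Subsection on $\mathfrak{A}_{3}\mathfrak{/Y}_{3}$ exhibits systems of words satisfying Op1) and Op2) with $w_{+,3}\left( x_{1},x_{2}\right) =x_{1}+x_{2}+\gamma _{1,2}x_{1}x_{2}+\gamma _{1,2}x_{2}x_{1}$ and $w_{\lambda ,3}\left( x\right) =\varphi \left( \lambda \right) x+\gamma _{1,2}\left( \varphi \left( \lambda \right) ^{2}-\varphi \left( \lambda \right) \right) x^{2}$ with $\gamma _{1,2}\neq 0$, so the forms you start from simply do not exhaust the admissible $W$ (and if they did, the later computation of $\mathfrak{S}_{3}$ and $\mathfrak{S}_{4}$ would be vacuous). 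For the same reason your stronger intermediate claim --- that the grading is preserved, i.e. $\left( F_{W}^{\ast }\right) _{j}=F_{j}$ --- is false; only the filtration equality $F^{i}=\left( F_{W}^{\ast }\right) ^{i}$ asserted in the proposition survives.

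What the argument actually needs, and what the paper extracts, is only leading-term information obtained from Op1) together with the axioms $x_{1}\perp 0=x_{1}$, $0\perp x_{2}=x_{2}$, $x_{1}\times 0=0\times x_{2}=0$: namely $w_{\lambda }\left( x\right) =\varphi \left( \lambda \right) x+l_{2}\left( x\right) $, $w_{+}\left( x_{1},x_{2}\right) =x_{1}+x_{2}+p_{2}\left( x_{1},x_{2}\right) $, $w_{\cdot }\left( x_{1},x_{2}\right) =\alpha _{1,2}x_{1}x_{2}+\alpha _{2,1}x_{2}x_{1}+q_{3}\left( x_{1},x_{2}\right) $ with $l_{2},p_{2}$ in the square and $q_{3}$ in the cube of the relevant free algebra; an induction on $i$ then yields $\left( F_{W}^{\ast }\right) ^{i}\subset F^{i}$ even in the presence of these higher-degree corrections. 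Your sketch of the forward inclusion would survive this repair, but as written it relies on exact homogeneity. The reverse inclusion also needs a genuine argument: the paper invokes \cite[Proposition 4.2]{TsurkovAutomEquiv}, and your parenthetical alternative (that $\sigma _{F}$ is an isomorphism carrying $F^{i}$ onto $\left( F_{W}^{\ast }\right) ^{i}$) can indeed be completed, but not by ``fixing generators'' alone --- one must combine $\sigma _{F}\left( F^{i}\right) =\left( F_{W}^{\ast }\right) ^{i}\subset F^{i}$ with injectivity of $\sigma _{F}$ and the finite dimensionality of free algebras in $\Theta _{n}$ to force equality. As it stands, the proposal has a genuine gap at its first step and overstates the conclusion.
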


\begin{proof}
By condition Op1) we have that $w_{0}=0$ and 
\begin{equation*}
w_{\lambda }\left( x\right) =\varphi \left( \lambda \right) x+l_{2}\left(
x\right) \in F\left( x\right) ,
\end{equation*}%
where $l_{2}\left( x\right) \in \left( F\left( x\right) \right) ^{2}$, $%
\varphi \left( \lambda \right) \in k$. Also 
\begin{equation*}
w_{+}\left( x_{1},x_{2}\right) =\alpha x_{1}+\beta x_{2}+p_{2}\left(
x_{1},x_{2}\right) \in F\left( x_{1},x_{2}\right) ,
\end{equation*}%
where $p_{2}\left( x_{1},x_{2}\right) \in \left( F\left( x_{1},x_{2}\right)
\right) ^{2}$, $\alpha ,\beta \in k$. By Op2) 
\begin{equation*}
w_{+}\left( x_{1},0\right) =\alpha x_{1}+p_{2}\left( x_{1},0\right) =x_{1}
\end{equation*}%
and 
\begin{equation*}
w_{+}\left( 0,x_{2}\right) =\beta x_{2}+p_{2}\left( 0,x_{2}\right) =x_{2}
\end{equation*}%
must fulfill. So $\alpha =\beta =1$. And 
\begin{equation*}
w_{+}\left( x_{1},x_{2}\right) =x_{1}+x_{2}+p_{2}\left( x_{1},x_{2}\right) .
\end{equation*}%
Analogously 
\begin{equation*}
w_{\cdot }\left( x_{1},x_{2}\right) =\gamma x_{1}+\delta x_{2}+q_{2}\left(
x_{1},x_{2}\right) \in F\left( x_{1},x_{2}\right) ,
\end{equation*}%
where $q_{2}\left( x_{1},x_{2}\right) \in \left( F\left( x_{1},x_{2}\right)
\right) ^{2}$, $\gamma ,\delta \in k$.%
\begin{equation*}
w_{\cdot }\left( x_{1},0\right) =\gamma x_{1}+q_{2}\left( x_{1},0\right) =0
\end{equation*}%
and%
\begin{equation*}
w_{\cdot }\left( 0,x_{2}\right) =\delta x_{2}+q_{2}\left( 0,x_{2}\right) =0
\end{equation*}%
must fulfill. So $\gamma =\delta =0$ and $q_{2}\left( x_{1},0\right)
=q_{2}\left( 0,x_{2}\right) =0$. $q_{2}\left( x_{1},0\right) $ is a sum of
monomials which contain only $x_{1}$ and $q_{2}\left( x_{2},0\right) $ is a
sum of monomials which contain only $x_{2}$. Therefore%
\begin{equation*}
w_{\cdot }\left( x_{1},x_{2}\right) =\alpha _{1,2}x_{1}x_{2}+\alpha
_{2,1}x_{2}x_{1}+q_{3}\left( x_{1},x_{2}\right) 
\end{equation*}%
where $q_{3}\left( x_{1},x_{2}\right) \in \left( F\left( x_{1},x_{2}\right)
\right) ^{3}$, $\alpha _{1,2},\alpha _{2,1}\in k$.

Now we can prove that 
\begin{equation}
\left( F_{W}^{\ast }\right) ^{i}\subset F^{i}  \label{F*isubFi}
\end{equation}%
holds for every $i\geq 1$. We will use induction by $i$. For $i=1$ (\ref%
{F*isubFi}) fulfills by definition of $F_{W}^{\ast }$. We assume that (\ref%
{F*isubFi}) proved for $i<j$. If $u\in \left( F_{W}^{\ast }\right) ^{r}$, $%
v\in \left( F_{W}^{\ast }\right) ^{s}$, such that $s+r=j$, $1\leq s+r$, then
by our assumption $u\in F^{r}$, $v\in F^{s}$. We have%
\begin{equation*}
u\times v=\alpha _{1,2}uv+\alpha _{2,1}vu+q_{3}\left( u,v\right) \in F^{j}.
\end{equation*}%
If $u,v\in F^{j}$ then%
\begin{equation*}
u\perp v=u+v+p_{2}\left( u,v\right) \in F^{j}.
\end{equation*}%
Also for every $\lambda \in k$%
\begin{equation*}
\lambda \ast u=\varphi \left( \lambda \right) u+l_{2}\left( u\right) \in
F^{j}.
\end{equation*}%
Therefore (\ref{F*isubFi}) is proved.

After this we can prove the inverse inclusion by \cite[Proposition 4.2]%
{TsurkovAutomEquiv}.
\end{proof}

\begin{proposition}
\label{sshomom}There is a homomorphism $\mathcal{S}_{n}:\mathfrak{S}%
_{n}\rightarrow \mathfrak{S}_{n-1}$.
\end{proposition}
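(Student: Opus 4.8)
The idea is to exploit the inclusion of varieties $\Theta_{n-1}\subset\Theta_n$. Since an $n$-nilpotent algebra lies in $\Theta_{n-1}$ exactly when in addition $F^{n-1}=0$, the free algebra of $\Theta_{n-1}$ on a finite set $X\subset X_0$ is the quotient $\overline F(X)=F(X)/F(X)^{n-1}$, where $F(X)=F_n(X)\in\mathrm{Ob}\,\Theta_n^0$; write $\pi_F\colon F\to\overline F$ for the canonical projection. (The verbal ideal of $F_n(X)$ generated by the monomials $\mathfrak p(x_1,\dots,x_{n-1})$, $\mathfrak p\in\mathfrak P_{n-1}$, is precisely the ideal $F_n(X)^{n-1}$.) Thus every object of $\Theta_{n-1}^0$ is of the form $\overline F$ for a unique $F\in\mathrm{Ob}\,\Theta_n^0$. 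Given $\Phi\in\mathfrak S_n$ with its uniquely determined system of bijections $\{s_F^\Phi\colon F\to F\mid F\in\mathrm{Ob}\,\Theta_n^0\}$ and corresponding system of words $W$, the plan is to define $\mathcal S_n(\Phi)\in\mathfrak S_{n-1}$ as the strongly stable automorphism of $\Theta_{n-1}^0$ whose system of bijections $\{\overline s_{\overline F}\colon\overline F\to\overline F\}$ is the one induced on the quotients; equivalently, $\mathcal S_n(\Phi)$ corresponds to the system of words $\overline W$ obtained from $W$ by reducing each $w_\omega$ modulo $(n-1)$-fold products.

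Step 1: the bijections descend. As recalled in Section~\ref{operations}, $s_F^\Phi=\sigma_F$ is an \emph{isomorphism} of linear algebras $F\to F_W^\ast$, hence it carries the power ideal $F^{n-1}$ of $F$ onto the power ideal $(F_W^\ast)^{n-1}$ of $F_W^\ast$; by Proposition~\ref{F*i=Fi}, $(F_W^\ast)^{n-1}=F^{n-1}$. Therefore $s_F^\Phi(F^{n-1})=F^{n-1}=\ker\pi_F$, and likewise $(s_F^\Phi)^{-1}(F^{n-1})=F^{n-1}$, so $s_F^\Phi$ induces mutually inverse bijections $\overline s_{\overline F}$, $\overline s_{\overline F}^{\,-1}$ of $\overline F$, determined by $\pi_F s_F^\Phi=\overline s_{\overline F}\,\pi_F$.

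Step 2: $\{\overline s_{\overline F}\}$ satisfies B1) and B2), hence defines an element $\mathcal S_n(\Phi)$ of $\mathfrak S_{n-1}$ by the correspondence of Section~\ref{operations}. Condition B2) is immediate: for a generator $x$ of $F$ we get $\overline s_{\overline F}(x)=\pi_F(s_F^\Phi(x))=\pi_F(x)=x$. For B1), let $\overline\psi\colon\overline A\to\overline B$ be a morphism of $\Theta_{n-1}^0$. Since $A\in\mathrm{Ob}\,\Theta_n^0$ is free, choosing for each generator $x$ of $A$ some $b_x\in B$ with $\pi_B(b_x)=\overline\psi(x)$ and extending yields a homomorphism $\psi\colon A\to B$ with $\pi_B\psi=\overline\psi\,\pi_A$. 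By B1) for $\Phi$, the map $g=s_B^\Phi\,\psi\,(s_A^\Phi)^{-1}\colon A\to B$ is a homomorphism, and using the relations from Step~1 one computes
\[
\overline s_{\overline B}\,\overline\psi\,\overline s_{\overline A}^{\,-1}\,\pi_A
=\overline s_{\overline B}\,\overline\psi\,\pi_A\,(s_A^\Phi)^{-1}
=\overline s_{\overline B}\,\pi_B\,\psi\,(s_A^\Phi)^{-1}
=\pi_B\,g .
\]
Because $\pi_A$ is a surjective homomorphism and $\pi_B g$ is a homomorphism, it follows that $\overline s_{\overline B}\,\overline\psi\,\overline s_{\overline A}^{\,-1}$ is a homomorphism; the same argument applied to $\overline s_{\overline B}^{\,-1}\,\overline\psi\,\overline s_{\overline A}$ completes B1).

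Step 3: $\mathcal S_n$ is a group homomorphism. Under $\Phi\mapsto\{s_F^\Phi\}$, composition in $\mathfrak S_n$ becomes pointwise composition of bijections, since $\Phi_1(\Phi_2(\psi))=s_F^{\Phi_1}s_F^{\Phi_2}\,\psi\,(s_D^{\Phi_1}s_D^{\Phi_2})^{-1}$ gives $s_F^{\Phi_1\Phi_2}=s_F^{\Phi_1}s_F^{\Phi_2}$. Passing to the quotient is functorial — the bijection of $\overline F$ induced by a composite of two bijections of $F$ that each preserve $F^{n-1}$ is the composite of the induced bijections — so $\overline s_{\overline F}^{\,\Phi_1\Phi_2}=\overline s_{\overline F}^{\,\Phi_1}\overline s_{\overline F}^{\,\Phi_2}$, i.e. $\mathcal S_n(\Phi_1\Phi_2)=\mathcal S_n(\Phi_1)\,\mathcal S_n(\Phi_2)$. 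The step I expect to require the most care is Step~2, specifically the identification of $F/F^{n-1}$ with the free algebra of $\Theta_{n-1}$ on the same generators and the verification that the descended maps are genuine homomorphisms; once Proposition~\ref{F*i=Fi} is available, Step~1 is automatic and Step~3 is purely formal.
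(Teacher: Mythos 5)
Your proposal is correct and follows essentially the same route as the paper: Proposition \ref{F*i=Fi} gives $s_{F}^{\Phi }\left( F^{n-1}\right) =F^{n-1}$ so the bijections descend to the quotients $F/F^{n-1}$, condition B1) is verified by lifting a morphism of $\Theta _{n-1}^{0}$ to the free algebras of $\Theta _{n}^{0}$ and using surjectivity of the projections, and the homomorphism property of $\mathcal{S}_{n}$ follows from compatibility of the descended bijections with the projections. The only point you gloss over is the well-definedness of the descended map on cosets (a set bijection preserving $F^{n-1}$ need not respect cosets a priori), which the paper secures by observing, via Remark 3.1 of \cite{TsurkovAutomEquiv}, that $\kappa _{F^{\left( n\right) }}$ is also a homomorphism $\left( F^{\left( n\right) }\right) _{W}^{\ast }\rightarrow \left( F^{\left( n-1\right) }\right) _{W}^{\ast }$, so $\kappa _{F^{\left( n\right) }}\sigma _{F^{\left( n\right) }}$ annihilates $\left( F^{\left( n\right) }\right) ^{n-1}$ and factors uniquely through the quotient.
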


\begin{proof}
We will define the mapping $\mathcal{S}_{n}:\mathfrak{S}_{n}\rightarrow 
\mathfrak{S}_{n-1}$ and after this we will prove that this mapping is a
homomorphism of groups. By Section \ref{operations} the element $\Phi \in 
\mathfrak{S}_{n}$ which is a strongly stable automorphism of the category%
\textit{\ }$\Theta _{n}^{0}$ can be defined by the system of words (\ref%
{words_list}) which fulfills conditions Op1) and Op20 as well as by the
system of bijections $\left\{ \sigma _{F^{\left( n\right) }}:F^{\left(
n\right) }\rightarrow F^{\left( n\right) }\mid F^{\left( n\right) }\in 
\mathrm{Ob}\Theta _{n}^{0}\right\} $ which fulfills conditions B1) and B2).
In this proof we actively use the second definition.

$F^{\left( n-1\right) }=F^{\left( n\right) }/\left( F^{\left( n\right)
}\right) ^{n-1}$ fulfills for every $F^{\left( n\right) }\in \mathrm{Ob}%
\Theta _{n}^{0}$, where $F^{\left( n-1\right) }\in \mathrm{Ob}\Theta
_{n-1}^{0}$. We consider $\Phi \in \mathfrak{S}_{n}$. By Section \ref%
{operations} $\sigma _{F^{\left( n\right) }}$ is an isomorphism $F^{\left(
n\right) }\rightarrow \left( F^{\left( n\right) }\right) _{W}^{\ast }$,
where $W=\left\{ \sigma _{F_{\omega }^{\left( n\right) }}\omega \mid \omega
\in \Omega \right\} $, $\Omega $ is a signature of linear algebras, $%
F_{\omega }^{\left( n\right) }=F^{\left( n\right) }\left( x_{1},\ldots
,x_{m}\right) \in \mathrm{Ob}\Theta _{n}^{0}$, $m$ is an arity of the
operation $\omega $. We denote $\kappa _{F^{\left( n\right) }}$ the
homomorphism $F^{\left( n\right) }\rightarrow F^{\left( n-1\right) }$. $%
F^{\left( n-1\right) }\in \Theta _{n}$, so in the set $F^{\left( n-1\right)
} $ we also can consider the verbal operations defined by the system of
words $W$. This algebra we denote $\left( F^{\left( n-1\right) }\right)
_{W}^{\ast } $. By \cite[Remark 3.1]{TsurkovAutomEquiv} $\kappa _{F^{\left(
n\right) }}$ is also homomorphism from $\left( F^{\left( n\right) }\right)
_{W}^{\ast }$ to $\left( F^{\left( n-1\right) }\right) _{W}^{\ast }$.

Now we will consider this diagram:%
\begin{equation}
\begin{array}{ccccccccc}
0 & \rightarrow  & \left( F^{\left( n\right) }\right) ^{n-1} & 
\hookrightarrow  & F^{\left( n\right) } & \underrightarrow{\kappa
_{F^{\left( n\right) }}} & F^{\left( n-1\right) } & \rightarrow  & 0 \\ 
&  & \sigma _{F^{\left( n\right) }}\downarrow  &  & \downarrow \sigma
_{F^{\left( n\right) }} &  & \downarrow \sigma _{F^{\left( n-1\right) }} & 
&  \\ 
0 & \rightarrow  & \left( F^{\left( n\right) }\right) ^{n-1} & 
\hookrightarrow  & \left( F^{\left( n\right) }\right) _{W}^{\ast } & 
\underrightarrow{\kappa _{F^{\left( n\right) }}} & \left( F^{\left(
n-1\right) }\right) _{W}^{\ast } & \rightarrow  & 0%
\end{array}%
.  \label{diagram}
\end{equation}%
Both rows of this diagram are exact because $\ker \kappa _{F^{\left(
n\right) }}=\left( F^{\left( n\right) }\right) ^{n-1}$. $\sigma _{F^{\left(
n\right) }}$is an isomorphism according the condition Op2). So 
\begin{equation}
\sigma _{F^{\left( n\right) }}\left( \left( F^{\left( n\right) }\right)
^{n-1}\right) =\left( \left( F^{\left( n\right) }\right) _{W}^{\ast }\right)
^{n-1}=\left( F^{\left( n\right) }\right) ^{n-1},  \label{surj_on_ker}
\end{equation}%
by Proposition \ref{F*i=Fi}. Therefore the left square of (\ref{diagram}) is
commutative. So we can close the right square of (\ref{diagram}) by uniquely
defined homomorphism $\sigma _{F^{\left( n-1\right) }}$ which fulfills 
\begin{equation}
\sigma _{F^{\left( n-1\right) }}\kappa _{F^{\left( n\right) }}=\kappa
_{F^{\left( n\right) }}\sigma _{F^{\left( n\right) }}.  \label{commut}
\end{equation}%
$\sigma _{F^{\left( n\right) }}$ is an isomorphism and (\ref{surj_on_ker})
fulfills, so $\sigma _{F^{\left( n-1\right) }}$ is an isomorphism.

Therefore for we have a system of bijections 
\begin{equation}
\left\{ \sigma _{F^{\left( n-1\right) }}:F^{\left( n-1\right) }\rightarrow
F^{\left( n-1\right) }\mid F^{\left( n-1\right) }\in \mathrm{Ob}\Theta
_{n-1}^{0}\right\} .  \label{bijectiobs}
\end{equation}%
We will check that (\ref{bijectiobs}) fulfills conditions B1) and B2). If $X$
is a set of free generators of $F^{\left( n\right) }\left( X\right)
=F^{\left( n\right) }\in \mathrm{Ob}\Theta _{n}^{0}$, then $\left\{ \kappa
_{F^{\left( n\right) }}\left( x\right) \mid x\in X\right\} $ is a set of
free generators of $F^{\left( n-1\right) }$. $\sigma _{F^{\left( n-1\right)
}}\kappa _{F^{\left( n\right) }}\left( x\right) =\kappa _{F^{\left( n\right)
}}\sigma _{F^{\left( n\right) }}\left( x\right) =\kappa _{F^{\left( n\right)
}}\left( x\right) $, so (\ref{bijectiobs}) fulfills condition B2).

We assume that $\psi :A^{\left( n-1\right) }\rightarrow B^{\left( n-1\right)
}\in \mathrm{Mor}\Theta _{n-1}^{0}$. $A^{\left( n-1\right) },B^{\left(
n-1\right) }\in \Theta _{n}$, $\kappa _{B^{\left( n\right) }}:B^{\left(
n\right) }\rightarrow B^{\left( n-1\right) }$ is an epimorphism, $A^{\left(
n\right) }$ is a free algebra in the variety $\Theta _{n}$, so there exists
a homomorphism $\widetilde{\psi }:A^{\left( n\right) }\rightarrow B^{\left(
n\right) }$, such that $\kappa _{B^{\left( n\right) }}\widetilde{\psi }=\psi
\kappa _{A^{\left( n\right) }}$. Therefore we have that 
\begin{equation*}
\sigma _{B^{\left( n-1\right) }}\psi \sigma _{A^{\left( n-1\right)
}}^{-1}\kappa _{A^{\left( n\right) }}=\sigma _{B^{\left( n-1\right) }}\psi
\kappa _{A^{\left( n\right) }}\sigma _{A^{\left( n\right) }}^{-1}=
\end{equation*}%
\begin{equation*}
\sigma _{B^{\left( n-1\right) }}\kappa _{B^{\left( n\right) }}\widetilde{%
\psi }\sigma _{A^{\left( n\right) }}^{-1}=\kappa _{B^{\left( n\right)
}}\sigma _{B^{\left( n\right) }}\widetilde{\psi }\sigma _{A^{\left( n\right)
}}^{-1}.
\end{equation*}%
The system of bijections $\left\{ \sigma _{F^{\left( n\right) }}\mid
F^{\left( n\right) }\in \mathrm{Ob}\Theta _{n}^{0}\right\} $ fulfills
condition B1), so $\sigma _{B^{\left( n\right) }}\widetilde{\psi }\sigma
_{A^{\left( n\right) }}^{-1}$ and $\kappa _{B^{\left( n\right) }}\sigma
_{B^{\left( n\right) }}\widetilde{\psi }\sigma _{A^{\left( n\right) }}^{-1}$
are homomorphisms. Therefore $\sigma _{B^{\left( n-1\right) }}\psi \sigma
_{A^{\left( n-1\right) }}^{-1}\kappa _{A^{\left( n\right) }}$ is a
homomorphism. If $\omega \in \Omega $ is a $m$-ary operation and $%
a_{1},\ldots ,a_{m}\in A^{\left( n-1\right) }$ then there are $f_{1},\ldots
,f_{m}\in A^{\left( n\right) }$ such that $a_{i}=\kappa _{A^{\left( n\right)
}}\left( f_{i}\right) $, $1\leq i\leq m$. So 
\begin{equation*}
\sigma _{B^{\left( n-1\right) }}\psi \sigma _{A^{\left( n-1\right)
}}^{-1}\omega \left( a_{1},\ldots ,a_{m}\right) =\sigma _{B^{\left(
n-1\right) }}\psi \sigma _{A^{\left( n-1\right) }}^{-1}\omega \left( \kappa
_{A^{\left( n\right) }}\left( f_{1}\right) ,\ldots ,\kappa _{A^{\left(
n\right) }}\left( f_{m}\right) \right) =
\end{equation*}%
\begin{equation*}
\sigma _{B^{\left( n-1\right) }}\psi \sigma _{A^{\left( n-1\right)
}}^{-1}\kappa _{A^{\left( n\right) }}\omega \left( f_{1},\ldots
,f_{m}\right) =
\end{equation*}%
\begin{equation*}
\omega \left( \sigma _{B^{\left( n-1\right) }}\psi \sigma _{A^{\left(
n-1\right) }}^{-1}\kappa _{A^{\left( n\right) }}\left( f_{1}\right) ,\ldots
,\sigma _{B^{\left( n-1\right) }}\psi \sigma _{A^{\left( n-1\right)
}}^{-1}\kappa _{A^{\left( n\right) }}\left( f_{m}\right) \right) =
\end{equation*}%
\begin{equation*}
\omega \left( \sigma _{B^{\left( n-1\right) }}\psi \sigma _{A^{\left(
n-1\right) }}^{-1}\left( a_{1}\right) ,\ldots ,\sigma _{B^{\left( n-1\right)
}}\psi \sigma _{A^{\left( n-1\right) }}^{-1}\left( a_{m}\right) \right) .
\end{equation*}%
Hence $\sigma _{B^{\left( n-1\right) }}\psi \sigma _{A^{\left( n-1\right)
}}^{-1}$ is a homomorphism. Analogously we can prove that $\sigma
_{B^{\left( n-1\right) }}^{-1}\psi \sigma _{A^{\left( n-1\right) }}$ is also
a homomorphism. Therefore (\ref{bijectiobs}) fulfills condition B1).

We can consider $\mathfrak{S}_{n}$ as set of bijections $\left\{ \sigma
_{F^{\left( n\right) }}:F^{\left( n\right) }\rightarrow F^{\left( n\right)
}\mid F^{\left( n\right) }\in \mathrm{Ob}\Theta _{n}^{0}\right\} $ which
fulfills conditions B1) and B2). And we can consider $\mathfrak{S}_{n-1}$ as
set of bijections (\ref{bijectiobs}) which fulfills conditions B1) and B2).
We prove that there is a mapping 
\begin{equation*}
\mathcal{S}_{n}:\mathfrak{S}_{n}\ni \left\{ \sigma _{F^{\left( n\right)
}}:F^{\left( n\right) }\rightarrow F^{\left( n\right) }\mid F^{\left(
n\right) }\in \mathrm{Ob}\Theta _{n}^{0}\right\} \rightarrow
\end{equation*}%
\begin{equation*}
\left\{ \sigma _{F^{\left( n-1\right) }}:F^{\left( n-1\right) }\rightarrow
F^{\left( n-1\right) }\mid F^{\left( n-1\right) }\in \mathrm{Ob}\Theta
_{n-1}^{0}\right\} \in \mathfrak{S}_{n-1}\text{.}
\end{equation*}%
Now we will prove that this mapping is a homomorphism. We also will use
notation $\mathcal{S}_{n}\left( \sigma _{F^{\left( n\right) }}\right)
=\sigma _{F^{\left( n-1\right) }}$, where $\sigma _{F^{\left( n-1\right) }}$
defined by condition (\ref{commut}). We assume that $\Phi _{1},\Phi _{2}\in 
\mathfrak{S}_{n}$. $\Phi _{i}$ is defined by system of bijection $\left\{
\sigma _{F^{\left( n\right) }}^{\left( i\right) }\mid F^{\left( n\right)
}\in \mathrm{Ob}\Theta _{n}^{0}\right\} $, $i=1,2$. $\mathcal{S}_{n}\left(
\Phi _{i}\right) $ is defined by system of bijection $\left\{ \mathcal{S}%
_{n}\left( \sigma _{F^{\left( n\right) }}^{\left( i\right) }\right) \mid
F^{\left( n\right) }\in \mathrm{Ob}\Theta _{n}^{0}\right\} $, $i=1,2$. $\Phi
_{1}\Phi _{2}$ is defined by system of bijection $\left\{ \sigma _{F^{\left(
n\right) }}^{\left( 1\right) }\sigma _{F^{\left( n\right) }}^{\left(
2\right) }\mid F^{\left( n\right) }\in \mathrm{Ob}\Theta _{n}^{0}\right\} $
and $\mathcal{S}_{n}\left( \Phi _{1}\Phi _{2}\right) $ is defined by system
of bijection $\left\{ \mathcal{S}_{n}\left( \sigma _{F^{\left( n\right)
}}^{\left( 1\right) }\sigma _{F^{\left( n\right) }}^{\left( 2\right)
}\right) \mid F^{\left( n\right) }\in \mathrm{Ob}\Theta _{n}^{0}\right\} $.
By (\ref{commut}) we have that%
\begin{equation*}
\mathcal{S}_{n}\left( \sigma _{F^{\left( n\right) }}^{\left( 1\right)
}\right) \mathcal{S}_{n}\left( \sigma _{F^{\left( n\right) }}^{\left(
2\right) }\right) \kappa _{F^{\left( n\right) }}=\mathcal{S}_{n}\left(
\sigma _{F^{\left( n\right) }}^{\left( 1\right) }\right) \kappa _{F^{\left(
n\right) }}\sigma _{F^{\left( n\right) }}^{\left( 2\right) }=\kappa
_{F^{\left( n\right) }}\left( \sigma _{F^{\left( n\right) }}^{\left(
1\right) }\sigma _{F^{\left( n\right) }}^{\left( 2\right) }\right)
\end{equation*}%
fulfills for every $F^{\left( n\right) }\in \mathrm{Ob}\Theta _{n}^{0}$.
Therefore 
\begin{equation*}
\mathcal{S}_{n}\left( \sigma _{F^{\left( n\right) }}^{\left( 1\right)
}\right) \mathcal{S}_{n}\left( \sigma _{F^{\left( n\right) }}^{\left(
2\right) }\right) =\mathcal{S}_{n}\left( \sigma _{F^{\left( n\right)
}}^{\left( 1\right) }\sigma _{F^{\left( n\right) }}^{\left( 2\right) }\right)
\end{equation*}%
and $\mathcal{S}_{n}\left( \Phi _{1}\right) \mathcal{S}_{n}\left( \Phi
_{2}\right) =\mathcal{S}_{n}\left( \Phi _{1}\Phi _{2}\right) $.
\end{proof}

\begin{corollary}
\label{rez}For every $\Phi \in \mathfrak{S}_{n}$ which defined by system of
words $W=\left\{ w_{\omega ,n}\mid \omega \in \Omega \right\} $ there exists 
$\Psi \in \mathfrak{S}_{n-1}$ which defined by system of words $\widetilde{W}%
=\left\{ \kappa _{F_{\omega }^{\left( n\right) }}w_{\omega ,n-1}\mid \omega
\in \Omega \right\} $ such that $w_{\omega ,n}=w_{\omega ,n-1}+r_{\omega ,n}$%
, $w_{\omega ,n}\in F_{m_{\omega }}^{\left( n\right) }$, $w_{\omega ,n-1}\in
\bigoplus\limits_{i=1}^{n-2}\left( F_{m_{\omega }}^{\left( n\right) }\right)
_{i}$, $r_{\omega ,n}\in \left( F_{m_{\omega }}^{\left( n\right) }\right)
_{n-1}$, $F_{m_{\omega }}^{\left( n\right) }=F^{\left( n\right) }\left(
x_{1},\ldots ,x_{m_{\omega }}\right) \in \mathrm{Ob}\Theta _{i}^{0}$, $%
m_{\omega }$ is arity of $\omega $.
\end{corollary}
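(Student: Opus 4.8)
The plan is to produce $\Psi$ as $\mathcal{S}_{n}\left( \Phi \right) $, where $\mathcal{S}_{n}:\mathfrak{S}_{n}\rightarrow \mathfrak{S}_{n-1}$ is the homomorphism of Proposition \ref{sshomom}, and then to read off its system of words from the corresponding system of bijections by means of formula (\ref{der_veb_opr}). So I would start with $\Phi \in \mathfrak{S}_{n}$, viewed as its system of bijections $\left\{ \sigma _{F^{\left( n\right) }}\mid F^{\left( n\right) }\in \mathrm{Ob}\Theta _{n}^{0}\right\} $. According to the construction carried out in the proof of Proposition \ref{sshomom}, the strongly stable automorphism $\Psi =\mathcal{S}_{n}\left( \Phi \right) \in \mathfrak{S}_{n-1}$ has system of bijections $\left\{ \sigma _{F^{\left( n-1\right) }}\mid F^{\left( n-1\right) }\in \mathrm{Ob}\Theta _{n-1}^{0}\right\} $ characterized by the commutation relation (\ref{commut}), that is, $\sigma _{F^{\left( n-1\right) }}\kappa _{F^{\left( n\right) }}=\kappa _{F^{\left( n\right) }}\sigma _{F^{\left( n\right) }}$ for every $F^{\left( n\right) }\in \mathrm{Ob}\Theta _{n}^{0}$.

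The next step is to compute the word that $\Psi $ attaches to an operation $\omega \in \Omega $. By (\ref{der_veb_opr}) read in $\Theta _{n-1}^{0}$, this word equals $\sigma _{F_{m_{\omega }}^{\left( n-1\right) }}\left( \omega \left( x_{1},\ldots ,x_{m_{\omega }}\right) \right) $, with $F_{m_{\omega }}^{\left( n-1\right) }=F^{\left( n-1\right) }\left( x_{1},\ldots ,x_{m_{\omega }}\right) $. Since $\kappa _{F_{m_{\omega }}^{\left( n\right) }}$ is a homomorphism of linear algebras sending the free generators of $F_{m_{\omega }}^{\left( n\right) }$ to those of $F_{m_{\omega }}^{\left( n-1\right) }$, the element $\omega \left( x_{1},\ldots ,x_{m_{\omega }}\right) $ of $F_{m_{\omega }}^{\left( n-1\right) }$ is the $\kappa _{F_{m_{\omega }}^{\left( n\right) }}$-image of the element $\omega \left( x_{1},\ldots ,x_{m_{\omega }}\right) $ of $F_{m_{\omega }}^{\left( n\right) }$. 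Combining this with (\ref{commut}) and then with (\ref{der_veb_opr}) applied in $\Theta _{n}^{0}$ I would obtain
\begin{equation*}
\sigma _{F_{m_{\omega }}^{\left( n-1\right) }}\left( \omega \left( x_{1},\ldots ,x_{m_{\omega }}\right) \right) =\kappa _{F_{m_{\omega }}^{\left( n\right) }}\left( \sigma _{F_{m_{\omega }}^{\left( n\right) }}\left( \omega \left( x_{1},\ldots ,x_{m_{\omega }}\right) \right) \right) =\kappa _{F_{m_{\omega }}^{\left( n\right) }}\left( w_{\omega ,n}\right) .
\end{equation*}

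It then remains to dispose of the top homogeneous component of $w_{\omega ,n}$. Because every variety of linear algebras over the infinite field $k$ is multi-homogeneous and $\Theta _{n}$ is nilpotent of degree $n$, one has $F_{m_{\omega }}^{\left( n\right) }=\bigoplus\nolimits_{i=1}^{n-1}\left( F_{m_{\omega }}^{\left( n\right) }\right) _{i}$ and $\left( F_{m_{\omega }}^{\left( n\right) }\right) ^{n-1}=\left( F_{m_{\omega }}^{\left( n\right) }\right) _{n-1}=\ker \kappa _{F_{m_{\omega }}^{\left( n\right) }}$. Decomposing $w_{\omega ,n}=w_{\omega ,n-1}+r_{\omega ,n}$, where $w_{\omega ,n-1}$ is the sum of the homogeneous components of $w_{\omega ,n}$ of degrees $1,\ldots ,n-2$ and $r_{\omega ,n}$ is its homogeneous component of degree $n-1$, we get $r_{\omega ,n}\in \ker \kappa _{F_{m_{\omega }}^{\left( n\right) }}$, hence $\kappa _{F_{m_{\omega }}^{\left( n\right) }}\left( w_{\omega ,n}\right) =\kappa _{F_{m_{\omega }}^{\left( n\right) }}\left( w_{\omega ,n-1}\right) $. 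Therefore $\Psi $ is defined by the system of words $\widetilde{W}=\left\{ \kappa _{F_{m_{\omega }}^{\left( n\right) }}w_{\omega ,n-1}\mid \omega \in \Omega \right\} $, which is what was claimed.

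On the whole this is routine bookkeeping resting on Proposition \ref{sshomom}; the one place that demands a bit of attention is the last paragraph, where I must know that $\kappa _{F_{m_{\omega }}^{\left( n\right) }}$ kills exactly the top homogeneous piece $\left( F_{m_{\omega }}^{\left( n\right) }\right) _{n-1}$, so that the degree-$(n-1)$ term $r_{\omega ,n}$ drops out under $\kappa _{F_{m_{\omega }}^{\left( n\right) }}$. That is the identity $\left( F_{m_{\omega }}^{\left( n\right) }\right) ^{n-1}=\left( F_{m_{\omega }}^{\left( n\right) }\right) _{n-1}$ in a free nilpotent algebra of degree $n$, which follows from multi-homogeneity together with the vanishing in $\Theta _{n}$ of every homogeneous component of degree at least $n$. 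A secondary point to keep straight is getting the direction of the maps $\kappa _{F_{m_{\omega }}^{\left( n\right) }}$ right when passing from (\ref{commut}) to the displayed computation.
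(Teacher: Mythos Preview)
Your argument is correct and follows essentially the same route as the paper: set $\Psi=\mathcal{S}_n(\Phi)$, use the commutation relation (\ref{commut}) together with (\ref{der_veb_opr}) to identify the word of $\Psi$ for each $\omega$ as $\kappa_{F_{m_\omega}^{(n)}}(w_{\omega,n})$, and then drop the top homogeneous component $r_{\omega,n}$ since it lies in $\ker\kappa_{F_{m_\omega}^{(n)}}$. Your write-up is in fact a bit more careful than the paper's (which has a couple of notational slips), particularly in spelling out why $\left(F_{m_\omega}^{(n)}\right)^{n-1}=\left(F_{m_\omega}^{(n)}\right)_{n-1}$.
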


\begin{proof}
We take $\Psi =\mathcal{S}_{n}\left( \Phi \right) $. The system of bijection 
$\left\{ \sigma _{F^{\left( n\right) }}\mid F^{\left( n\right) }\in \mathrm{%
Ob}\Theta _{n}^{0}\right\} $ corresponds to the automorphism $\Phi $, the
system of bijection $\left\{ \sigma _{F^{\left( n-1\right) }}\mid F^{\left(
n-1\right) }\in \mathrm{Ob}\Theta _{n-1}^{0}\right\} $ corresponds to the
automorphism $\Psi $. The system of words $\left\{ \sigma _{F^{\left(
n-1\right) }}\kappa _{F_{\omega }^{\left( n\right) }}\omega \mid \omega \in
\Omega \right\} $ defines the automorphism $\Psi $. For every $\omega \in
\Omega $ we can decompose $\sigma _{F^{\left( n-1\right) }}\omega =w_{\omega
,n}=w_{\omega ,n-1}+r_{\omega ,n}$. We have that 
\begin{equation*}
\kappa _{F_{\omega }^{\left( n\right) }}w_{\omega ,n-1}=\kappa _{F_{\omega
}^{\left( n\right) }}w_{\omega ,n}=\kappa _{F_{\omega }^{\left( n\right)
}}\sigma _{F^{\left( n-1\right) }}\omega =\sigma _{F^{\left( n-1\right)
}}\kappa _{F_{\omega }^{\left( n\right) }}\omega .
\end{equation*}
\end{proof}

\setcounter{corollary}{0}

\begin{proposition}
\label{i_homom}If $\Phi \in \mathfrak{S}_{n}\mathfrak{\cap Y}_{n}$, then $%
\mathcal{S}_{n}\left( \Phi \right) \in \mathfrak{S}_{n-1}\mathfrak{\cap Y}%
_{n-1}$.
\end{proposition}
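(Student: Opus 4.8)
The plan is to repeat the construction of $\mathcal{S}_{n}$ from the proof of Proposition \ref{sshomom}, this time applied not to the bijections $\sigma_{F^{(n)}}$ themselves but to the \emph{intertwining} isomorphisms furnished by Criterion \ref{inner_stable}, and then to check that the pushed-down isomorphisms still intertwine. Let $\Phi\in\mathfrak{S}_{n}\cap\mathfrak{Y}_{n}$ be defined by the system of words $W$, and set $\Psi=\mathcal{S}_{n}\left( \Phi\right) $. Recall from the proof of Proposition \ref{sshomom} that $\kappa_{F^{\left( n\right) }}$ is a homomorphism $\left( F^{\left( n\right) }\right) _{W}^{\ast}\rightarrow\left( F^{\left( n-1\right) }\right) _{W}^{\ast}$ for every $F^{\left( n\right) }\in\mathrm{Ob}\,\Theta_{n}^{0}$, and that $\Psi$ is carried by the system of bijections $\left\{ \sigma_{F^{\left( n-1\right) }}:F^{\left( n-1\right) }\rightarrow\left( F^{\left( n-1\right) }\right) _{W}^{\ast}\right\} $ produced there; hence, by Criterion \ref{inner_stable}, to prove $\Psi\in\mathfrak{Y}_{n-1}$ it suffices to produce, for every $F^{\left( n-1\right) }\in\mathrm{Ob}\,\Theta_{n-1}^{0}$, an isomorphism $c_{F^{\left( n-1\right) }}:F^{\left( n-1\right) }\rightarrow\left( F^{\left( n-1\right) }\right) _{W}^{\ast}$ such that $c_{F^{\left( n-1\right) }}\psi=\psi c_{D^{\left( n-1\right) }}$ for every $\left( \psi:D^{\left( n-1\right) }\rightarrow F^{\left( n-1\right) }\right) \in\mathrm{Mor}\,\Theta_{n-1}^{0}$.

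I would obtain these $c_{F^{(n-1)}}$ by pushing the intertwining isomorphisms of $\Phi$ down through $\kappa$. Since $\Phi\in\mathfrak{Y}_{n}$, Criterion \ref{inner_stable} gives isomorphisms $c_{F^{\left( n\right) }}:F^{\left( n\right) }\rightarrow\left( F^{\left( n\right) }\right) _{W}^{\ast}$ with $c_{F^{\left( n\right) }}\psi=\psi c_{D^{\left( n\right) }}$ for every morphism $\psi:D^{\left( n\right) }\rightarrow F^{\left( n\right) }$ of $\Theta_{n}^{0}$. By Proposition \ref{F*i=Fi}, $\left( \left( F^{\left( n\right) }\right) _{W}^{\ast}\right) ^{n-1}=\left( F^{\left( n\right) }\right) ^{n-1}$, so the isomorphism $c_{F^{\left( n\right) }}$ maps the power ideal $\left( F^{\left( n\right) }\right) ^{n-1}$ onto $\left( F^{\left( n\right) }\right) ^{n-1}$, i.e. it restricts to an automorphism of $\left( F^{\left( n\right) }\right) ^{n-1}$. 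Then in the diagram obtained from (\ref{diagram}) by replacing each $\sigma$ with the corresponding $c$, both rows are exact (the kernels being $\ker\kappa_{F^{\left( n\right) }}=\left( F^{\left( n\right) }\right) ^{n-1}$) and the left square commutes, so the right square can be closed by a uniquely determined homomorphism $c_{F^{\left( n-1\right) }}:F^{\left( n-1\right) }\rightarrow\left( F^{\left( n-1\right) }\right) _{W}^{\ast}$ with $c_{F^{\left( n-1\right) }}\kappa_{F^{\left( n\right) }}=\kappa_{F^{\left( n\right) }}c_{F^{\left( n\right) }}$; and, exactly as for $\sigma_{F^{\left( n-1\right) }}$ in Proposition \ref{sshomom}, the fact that $c_{F^{\left( n\right) }}$ is an isomorphism inducing an isomorphism on the kernels forces $c_{F^{\left( n-1\right) }}$ to be an isomorphism.

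It then remains to verify the intertwining identity for the $c_{F^{(n-1)}}$. Given $\psi:D^{\left( n-1\right) }\rightarrow F^{\left( n-1\right) }$ in $\Theta_{n-1}^{0}$, write $D^{\left( n-1\right) }=D^{\left( n\right) }/\left( D^{\left( n\right) }\right) ^{n-1}$ with $D^{\left( n\right) }$ the free algebra of $\Theta_{n}$ on the free generators of $D^{\left( n-1\right) }$; as in the proof of Proposition \ref{sshomom}, freeness of $D^{\left( n\right) }$ and surjectivity of $\kappa_{F^{\left( n\right) }}$ yield a lift $\widetilde{\psi}:D^{\left( n\right) }\rightarrow F^{\left( n\right) }$ with $\kappa_{F^{\left( n\right) }}\widetilde{\psi}=\psi\kappa_{D^{\left( n\right) }}$. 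Then
\begin{align*}
c_{F^{\left( n-1\right) }}\psi\kappa_{D^{\left( n\right) }} &= c_{F^{\left( n-1\right) }}\kappa_{F^{\left( n\right) }}\widetilde{\psi}=\kappa_{F^{\left( n\right) }}c_{F^{\left( n\right) }}\widetilde{\psi}=\kappa_{F^{\left( n\right) }}\widetilde{\psi}\,c_{D^{\left( n\right) }} \\
&= \psi\kappa_{D^{\left( n\right) }}c_{D^{\left( n\right) }}=\psi\,c_{D^{\left( n-1\right) }}\kappa_{D^{\left( n\right) }},
\end{align*}
where the third equality uses that $\Phi$ is inner (Criterion \ref{inner_stable} applied to the morphism $\widetilde{\psi}$) and the last uses the defining relation of $c_{D^{\left( n-1\right) }}$. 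Since $\kappa_{D^{\left( n\right) }}$ is an epimorphism it may be cancelled on the right, giving $c_{F^{\left( n-1\right) }}\psi=\psi c_{D^{\left( n-1\right) }}$. By Criterion \ref{inner_stable} this shows $\Psi=\mathcal{S}_{n}\left( \Phi\right) \in\mathfrak{S}_{n-1}\cap\mathfrak{Y}_{n-1}$.

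The argument is almost word-for-word parallel to that of Proposition \ref{sshomom}, so I do not expect a genuine obstacle. The two points requiring care are: (i) that Criterion \ref{inner_stable} applies to $\Psi$ with exactly the target algebras $\left( F^{\left( n-1\right) }\right) _{W}^{\ast}$ that appeared in the construction of $\mathcal{S}_{n}$ (which is precisely what the proof of Proposition \ref{sshomom}, via Remark 3.1 of \cite{TsurkovAutomEquiv}, records), and (ii) the final right-cancellation of the epimorphism $\kappa_{D^{\left( n\right) }}$, which is the step that lets the naturality of the family $\left\{ c_{F^{\left( n\right) }}\right\} $ descend to naturality of the family $\left\{ c_{F^{\left( n-1\right) }}\right\} $.
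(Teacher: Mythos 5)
Your proof is correct, and its skeleton (push the witnessing isomorphisms down through $\kappa _{F^{\left( n\right) }}$ via the diagram of Proposition \ref{sshomom}, lift a morphism of $\Theta _{n-1}^{0}$ to $\Theta _{n}^{0}$, and cancel the epimorphism $\kappa _{D^{\left( n\right) }}$ on the right) is the same as the paper's; what differs is which characterization of innerness you push down. The paper works directly with Definition \ref{inner}: it takes the natural isomorphisms $s_{F^{\left( n\right) }}:F^{\left( n\right) }\rightarrow F^{\left( n\right) }$ satisfying $s_{B^{\left( n\right) }}\psi =\Phi \left( \psi \right) s_{A^{\left( n\right) }}$, for which preservation of $\left( F^{\left( n\right) }\right) ^{n-1}$ is automatic (they are ordinary automorphisms of $F^{\left( n\right) }$, so no appeal to Proposition \ref{F*i=Fi} is needed --- the paper calls this step ``simpler'' than the corresponding step for the $\sigma $'s), and it then verifies $\left( \mathcal{S}_{n}\left( \Phi \right) \right) \left( \psi \right) s_{A^{\left( n-1\right) }}=s_{B^{\left( n-1\right) }}\psi $ by a longer computation that also involves the $\sigma $-system and $\Phi \left( \widetilde{\psi }\right) $. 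You instead use Criterion \ref{inner_stable} in both directions: your $c_{F^{\left( n\right) }}:F^{\left( n\right) }\rightarrow \left( F^{\left( n\right) }\right) _{W}^{\ast }$ do need Proposition \ref{F*i=Fi} to see that they carry $\left( F^{\left( n\right) }\right) ^{n-1}$ onto itself, but in exchange the final intertwining computation $c_{F^{\left( n-1\right) }}\psi =\psi c_{D^{\left( n-1\right) }}$ is shorter and never mentions $\Phi \left( \psi \right) $ or the $\sigma $'s. Your point (i) is indeed the one identification that must be made explicit: the target algebras in Criterion \ref{inner_stable} applied to $\Psi =\mathcal{S}_{n}\left( \Phi \right) $ are built from the word system $\widetilde{W}$ of Corollary \ref{rez}, and these coincide with the algebras $\left( F^{\left( n-1\right) }\right) _{W}^{\ast }$ used in Proposition \ref{sshomom}, because $w_{\omega ,n}$ and $\kappa _{F_{\omega }^{\left( n\right) }}w_{\omega ,n}$ induce the same verbal operation on any algebra of $\Theta _{n-1}\subset \Theta _{n}$. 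With that noted, your argument is complete and equally valid.
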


\begin{proof}
If $\Phi \in \mathfrak{S}_{n}\mathfrak{\cap Y}_{n}$, then by definitions \ref%
{inner} and \ref{str_stab_aut} there exists a system of isomorphisms $%
\left\{ s_{F^{\left( n\right) }}:F^{\left( n\right) }\rightarrow F^{\left(
n\right) }\mid F^{\left( n\right) }\in \mathrm{Ob}\Theta _{n}^{0}\right\} $
such that $s_{B^{\left( n\right) }}\psi =\Phi \left( \psi \right)
s_{A^{\left( n\right) }}$ fulfills for every $\psi \in \mathrm{Mor}_{\Theta
_{n}^{0}}\left( A^{\left( n\right) },B^{\left( n\right) }\right) $. For
every $F^{\left( n\right) }\in \mathrm{Ob}\Theta _{n}^{0}$ we consider the
diagram%
\begin{equation*}
\begin{array}{ccccccccc}
0 & \rightarrow  & \left( F^{\left( n\right) }\right) ^{n-1} & 
\hookrightarrow  & F^{\left( n\right) } & \underrightarrow{\kappa
_{F^{\left( n\right) }}} & F^{\left( n-1\right) } & \rightarrow  & 0 \\ 
&  & s_{F^{\left( n\right) }}\downarrow  &  & \downarrow s_{F^{\left(
n\right) }} &  & \downarrow s_{F^{\left( n-1\right) }} &  &  \\ 
0 & \rightarrow  & \left( F^{\left( n\right) }\right) ^{n-1} & 
\hookrightarrow  & F^{\left( n\right) } & \underrightarrow{\kappa
_{F^{\left( n\right) }}} & F^{\left( n-1\right) } & \rightarrow  & 0%
\end{array}%
\end{equation*}%
and simpler than after the consideration of the diagram (\ref{diagram})
conclude that we can close the right square of this diagram by isomorphism $%
s_{F^{\left( n-1\right) }}$ which fulfills $s_{F^{\left( n-1\right) }}\kappa
_{F^{\left( n\right) }}=\kappa _{F^{\left( n\right) }}s_{F^{\left( n\right)
}}$.

We will consider a homomorphism $\psi \in \mathrm{Mor}_{\Theta
_{n-1}^{0}}\left( A^{\left( n-1\right) },B^{\left( n-1\right) }\right) $. As
in the proof of the Proposition \ref{sshomom} we can conclude that there
exists a homomorphism $\widetilde{\psi }\in \mathrm{Mor}_{\Theta
_{n}^{0}}\left( A^{\left( n\right) },B^{\left( n\right) }\right) $ such that 
$\kappa _{B^{\left( n\right) }}\widetilde{\psi }=\psi \kappa _{A^{\left(
n\right) }}$ fulfills. The system of bijections which corresponds to the
automorhism $\Phi $ we denote by $\left\{ \sigma _{F^{\left( n\right) }}\mid
F^{\left( n\right) }\in \mathrm{Ob}\Theta _{n}^{0}\right\} $ and the system
of bijections which corresponds to the automorphism $\mathcal{S}_{n}\left(
\Phi \right) $ we denote by $\left\{ \mathcal{S}_{n}\left( \sigma
_{F^{\left( n\right) }}\right) :F^{\left( n-1\right) }\rightarrow F^{\left(
n-1\right) }\mid F^{\left( n\right) }\in \mathrm{Ob}\Theta _{n}^{0}\right\} $%
. By definition \ref{str_stab_aut} the $\Phi \left( \widetilde{\psi }\right)
=\sigma _{B^{\left( n\right) }}\widetilde{\psi }\sigma _{A^{\left( n\right)
}}^{-1}$ and $\left( \mathcal{S}_{n}\left( \Phi \right) \right) \left( \psi
\right) =\mathcal{S}_{n}\left( \sigma _{B^{\left( n\right) }}\right) \psi
\left( \mathcal{S}_{n}\left( \sigma _{A^{\left( n\right) }}\right) \right)
^{-1}$ fulfills. So we have that 
\begin{equation*}
\left( \left( \mathcal{S}_{n}\left( \Phi \right) \right) \left( \psi \right)
\right) s_{A^{\left( n-1\right) }}\kappa _{A^{\left( n\right) }}=\mathcal{S}%
_{n}\left( \sigma _{B^{\left( n\right) }}\right) \psi \left( \mathcal{S}%
_{n}\left( \sigma _{A^{\left( n\right) }}\right) \right) ^{-1}\kappa
_{A^{\left( n\right) }}s_{A^{\left( n\right) }}=
\end{equation*}%
\begin{equation*}
\mathcal{S}_{n}\left( \sigma _{B^{\left( n\right) }}\right) \psi \kappa
_{A^{\left( n\right) }}\sigma _{A^{\left( n\right) }}^{-1}s_{A^{\left(
n\right) }}=\mathcal{S}_{n}\left( \sigma _{B^{\left( n\right) }}\right)
\kappa _{B^{\left( n\right) }}\widetilde{\psi }\sigma _{A^{\left( n\right)
}}^{-1}s_{A^{\left( n\right) }}=
\end{equation*}%
\begin{equation*}
\kappa _{B^{\left( n\right) }}\sigma _{B^{\left( n\right) }}\widetilde{\psi }%
\sigma _{A^{\left( n\right) }}^{-1}s_{A^{\left( n\right) }}=\kappa
_{B^{\left( n\right) }}\Phi \left( \widetilde{\psi }\right) s_{A^{\left(
n\right) }}=
\end{equation*}%
\begin{equation*}
\kappa _{B^{\left( n\right) }}s_{B^{\left( n\right) }}\widetilde{\psi }%
=s_{B^{\left( n-1\right) }}\kappa _{B^{\left( n\right) }}\widetilde{\psi }%
=s_{B^{\left( n-1\right) }}\psi \kappa _{A^{\left( n\right) }}
\end{equation*}%
holds. Therefore $\left( \left( \mathcal{S}_{n}\left( \Phi \right) \right)
\left( \psi \right) \right) s_{A^{\left( n-1\right) }}=s_{B^{\left(
n-1\right) }}\psi $ and $\mathcal{S}_{n}\left( \Phi \right) $ is an inner
automorphism with the system of isomorphisms $\left\{ s_{F^{\left(
n-1\right) }}\mid F^{\left( n-1\right) }\in \mathrm{Ob}\Theta
_{n-1}^{0}\right\} $.
\end{proof}

From Propositions \ref{sshomom} and \ref{i_homom} we can conclude the

\begin{theorem}
There is a homomorphism $\widetilde{\mathcal{S}}_{n}:\mathfrak{A}_{n}%
\mathfrak{/Y}_{n}\rightarrow \mathfrak{A}_{n-1}\mathfrak{/Y}_{n-1}$.
\end{theorem}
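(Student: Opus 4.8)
The plan is to obtain $\widetilde{\mathcal{S}}_{n}$ directly from Propositions \ref{sshomom} and \ref{i_homom} together with the decomposition (\ref{decomp}), applied to both categories $\Theta _{n}^{0}$ and $\Theta _{n-1}^{0}$. First I would record that both varieties $\Theta _{n}$ and $\Theta _{n-1}$ possess the IBN property: for a free algebra $F=F\left( X\right) $ of a nilpotent variety of linear algebras of degree not more than $n$ with $n\geq 3$ the span of $X$ meets the ideal $F^{2}$ trivially, so $\left\vert X\right\vert =\dim F/F^{2}$ and the IBN property follows exactly as for the varieties of Section \ref{operations_in_linear_alg}. Hence we have $\mathfrak{A}_{n}=\mathfrak{Y}_{n}\mathfrak{S}_{n}$ and $\mathfrak{A}_{n-1}=\mathfrak{Y}_{n-1}\mathfrak{S}_{n-1}$, and since $\mathfrak{Y}_{n}$ is normal in $\mathfrak{A}_{n}$ and $\mathfrak{Y}_{n-1}$ is normal in $\mathfrak{A}_{n-1}$, the second isomorphism theorem for groups gives canonical isomorphisms $\mathfrak{A}_{n}/\mathfrak{Y}_{n}\cong \mathfrak{S}_{n}/\left( \mathfrak{S}_{n}\cap \mathfrak{Y}_{n}\right) $ and $\mathfrak{A}_{n-1}/\mathfrak{Y}_{n-1}\cong \mathfrak{S}_{n-1}/\left( \mathfrak{S}_{n-1}\cap \mathfrak{Y}_{n-1}\right) $.

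Next I would invoke Proposition \ref{sshomom}, which furnishes a group homomorphism $\mathcal{S}_{n}:\mathfrak{S}_{n}\rightarrow \mathfrak{S}_{n-1}$, and Proposition \ref{i_homom}, which says precisely that $\mathcal{S}_{n}$ maps $\mathfrak{S}_{n}\cap \mathfrak{Y}_{n}$ into $\mathfrak{S}_{n-1}\cap \mathfrak{Y}_{n-1}$. The subgroup $\mathfrak{S}_{n}\cap \mathfrak{Y}_{n}$ is normal in $\mathfrak{S}_{n}$ because $\mathfrak{Y}_{n}$ is normal in $\mathfrak{A}_{n}$, and likewise for $n-1$; therefore the composite of $\mathcal{S}_{n}$ with the canonical projection $\mathfrak{S}_{n-1}\rightarrow \mathfrak{S}_{n-1}/\left( \mathfrak{S}_{n-1}\cap \mathfrak{Y}_{n-1}\right) $ kills $\mathfrak{S}_{n}\cap \mathfrak{Y}_{n}$ and so factors through a well-defined homomorphism $\overline{\mathcal{S}}_{n}:\mathfrak{S}_{n}/\left( \mathfrak{S}_{n}\cap \mathfrak{Y}_{n}\right) \rightarrow \mathfrak{S}_{n-1}/\left( \mathfrak{S}_{n-1}\cap \mathfrak{Y}_{n-1}\right) $. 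Transporting $\overline{\mathcal{S}}_{n}$ along the two isomorphisms of the previous paragraph yields the asserted homomorphism $\widetilde{\mathcal{S}}_{n}:\mathfrak{A}_{n}/\mathfrak{Y}_{n}\rightarrow \mathfrak{A}_{n-1}/\mathfrak{Y}_{n-1}$.

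In this argument there is no real obstacle left: the substance lies entirely in Propositions \ref{sshomom} and \ref{i_homom}, i.e.\ in checking that the quotient construction $F^{\left( n\right) }\mapsto F^{\left( n\right) }/\left( F^{\left( n\right) }\right) ^{n-1}$ is compatible with the systems of bijections that define strongly stable automorphisms (here Proposition \ref{F*i=Fi}, that verbal operations leave the ideals $F^{i}$ invariant, is essential) and with the systems of isomorphisms witnessing that an automorphism is inner. Granting those two propositions, the present theorem is a purely formal consequence of the homomorphism and isomorphism theorems for groups; the only extra remarks needed are the IBN property of the nilpotent varieties, so that (\ref{decomp}) is available for both $n$ and $n-1$, and the normality of $\mathfrak{S}_{n}\cap \mathfrak{Y}_{n}$ in $\mathfrak{S}_{n}$, both of which are immediate.
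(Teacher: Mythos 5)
Your proposal is correct and follows exactly the route the paper intends: the paper simply states that the theorem follows from Propositions \ref{sshomom} and \ref{i_homom}, and your argument (IBN giving the decomposition (\ref{decomp}) for both $\Theta _{n}^{0}$ and $\Theta _{n-1}^{0}$, the canonical isomorphisms $\mathfrak{A}_{n}/\mathfrak{Y}_{n}\cong \mathfrak{S}_{n}/\left( \mathfrak{S}_{n}\cap \mathfrak{Y}_{n}\right)$, and factoring $\mathcal{S}_{n}$ through the quotients) is precisely the formal verification the paper leaves implicit. No gaps.
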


\subsection{Group $\mathfrak{A}_{3}\mathfrak{/Y}_{3}$.\label{nilp3}}

\begin{theorem}
$\mathfrak{A}_{3}\mathfrak{/Y}_{3}\cong k^{\ast }\mathfrak{\leftthreetimes }%
\mathrm{Aut}k$.
\end{theorem}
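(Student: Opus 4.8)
The plan is to compute $\mathfrak{S}_3$ and $\mathfrak{S}_3 \cap \mathfrak{Y}_3$ directly, following the template already established in Section~4 for the classical varieties, and then to package the result via $\mathfrak{A}_3/\mathfrak{Y}_3 \cong \mathfrak{S}_3/(\mathfrak{S}_3 \cap \mathfrak{Y}_3)$. First I would fix an arbitrary strongly stable automorphism $\Phi$ of $\Theta_3^0$ with its system of words $W = \{w_0, w_\lambda, w_+, w_\cdot\}$ fulfilling Op1) and Op2). Since $\Theta_3$ is the nilpotent variety of degree $3$, the free algebra $F(x)$ on one generator is $k x \oplus k x^2$, a $2$-dimensional space with $x \cdot x = x^2$ and all degree-$3$ products zero. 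From the invertibility relation \eqref{lambda_invers} and the analysis in Proposition~\ref{F*i=Fi} (which gives $w_\lambda(x) = \varphi(\lambda) x + l_2(x)$ with $l_2 \in (F(x))^2$), together with the axioms $\lambda\ast(\mu\ast x) = (\lambda\mu)\ast x$ and $(\lambda+\mu)\ast x = \lambda\ast x + \mu\ast x$, I expect to deduce $\varphi \in \mathrm{Aut}\,k$ exactly as in \cite{Tsurkov}; the coefficient of $x^2$ in $w_\lambda$ is then forced (by homogeneity and multiplicativity of the scalar action) to be determined, and in fact one checks it must vanish, so $w_\lambda(x) = \varphi(\lambda) x$.

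Next I would pin down $w_+$ and $w_\cdot$. By the computation inside Proposition~\ref{F*i=Fi}, $w_+(x_1,x_2) = x_1 + x_2 + p_2(x_1,x_2)$ with $p_2$ of degree $2$; the axioms $\lambda\ast(x_1\perp x_2) = (\lambda\ast x_1)\perp(\lambda\ast x_2)$ and commutativity/associativity of $\perp$ (the latter available since addition must remain a group operation) force $p_2 = 0$, so $w_+(x_1,x_2) = x_1 + x_2$ as in \eqref{addition}. Again by Proposition~\ref{F*i=Fi}, $w_\cdot(x_1,x_2) = \alpha_{1,2} x_1 x_2 + \alpha_{2,1} x_2 x_1 + q_3(x_1,x_2)$, and since $\Theta_3$ kills all degree-$3$ monomials, $q_3 = 0$ in $F(x_1,x_2)$; so $w_\cdot(x_1,x_2) = \alpha_{1,2} x_1 x_2 + \alpha_{2,1} x_2 x_1$ — the same shape as the power-associative case \eqref{power_assoc_mult}. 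The reverse direction — that every such $W$ with $(\alpha_{1,2},\alpha_{2,1})$ nonzero in the appropriate sense actually satisfies Op2) — is easy here: the only linear-algebra axioms to verify are bilinearity of $\times$, which is immediate, and there are no further identities to check because in $\Theta_3$ every instance of a defining identity \eqref{nilp_identities} is a degree-$3$ expression, hence automatically zero in $(F_W^\ast)$ just as it is in $F$. This gives $\mathfrak{S}_3 \cong U(k\mathbf{S}_2) \leftthreetimes \mathrm{Aut}\,k$, with $U(k\mathbf{S}_2) = \{\alpha_{1,2} e + \alpha_{2,1}(12)\}^{\times}$ the invertible elements.

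The condition for $\Phi$ to be inner is handled by Criterion~\ref{inner_stable}, exactly as for the classical varieties: I would argue that $\Phi$ is inner if and only if $\alpha_{2,1} = 0$ and $\varphi = \mathrm{id}_k$, i.e. $\mathfrak{S}_3 \cap \mathfrak{Y}_3 \cong U(k\{e\}) = k^\ast e$. The ``only if'' direction uses that an inner $\Phi$ must, on the $2$-generated free algebra, be realized by a genuine algebra isomorphism $c_F$ compatible with all morphisms, which (testing against the coordinate projections and the swap of generators) kills any twist by the transposition and any field automorphism; the ``if'' direction is the observation that scaling by $\alpha_{1,2} \in k^\ast$ is implemented uniformly by $f \mapsto $ (the automorphism multiplying degree-$i$ components by $\alpha_{1,2}^{i-1}$). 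Then, via the group isomorphism $U(k\mathbf{S}_2) \ni a e + b(12) \mapsto (a+b, a-b) \in k^\ast \times k^\ast$ displayed in the excerpt, $U(k\mathbf{S}_2)/k^\ast e \cong k^\ast$, and the $\mathrm{Aut}\,k$-action survives the quotient, yielding $\mathfrak{A}_3/\mathfrak{Y}_3 \cong k^\ast \leftthreetimes \mathrm{Aut}\,k$. The main obstacle I anticipate is the rigorous verification that $w_\lambda$ has no degree-$2$ tail and that $q_3 = 0$ is the \emph{only} constraint — i.e. that no hidden interaction between $w_\lambda$, $w_+$, $w_\cdot$ imposes a relation among $\alpha_{1,2}, \alpha_{2,1}, \varphi$ beyond $\alpha_{1,2} + \alpha_{2,1} \neq 0$ and (for non-degeneracy, so the product on $F_W^\ast$ is not identically zero) $(\alpha_{1,2},\alpha_{2,1}) \neq (0,0)$; this is where I would need to be careful that the argument genuinely parallels \cite{Tsurkov} rather than merely resembling it.
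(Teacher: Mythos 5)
There is a genuine gap: your computation of $\mathfrak{S}_{3}$ is wrong, because the step in which you claim the quadratic tails vanish fails in the nilpotent variety. In $\Theta _{3}$ the operation $x_{1}\perp x_{2}=x_{1}+x_{2}+\gamma _{1,2}x_{1}x_{2}+\gamma _{1,2}x_{2}x_{1}$ with $\gamma _{1,2}\neq 0$ is still commutative, associative, has $0$ as neutral element and admits inverses (all the obstructing terms are of degree $3$ and die by nilpotency), and it is compatible with the deformed scalar operation $w_{\lambda }\left( x\right) =\varphi \left( \lambda \right) x+\gamma _{1,2}\left( \varphi \left( \lambda \right) ^{2}-\varphi \left( \lambda \right) \right) x^{2}$. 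So neither ``commutativity/associativity of $\perp $'' forces $p_{2}=0$, nor is the $x^{2}$-coefficient of $w_{\lambda }$ forced to vanish; the grading argument you import from the classical (non-nilpotent) varieties of Section 4 and \cite{Tsurkov} breaks down precisely because the truncation at degree $3$ removes the higher-degree constraints. The paper's proof instead derives the general form (\ref{real_list_nilp3}), in which the extra parameter $\gamma _{1,2}\in k$ genuinely occurs, verifies Op2) for all of these systems of words via the composition law (\ref{homom_mult_3}) and the decomposition (\ref{autom_decomp_3}), and only then computes $\mathfrak{Y}_{3}\cap \mathfrak{S}_{3}$: the automorphisms with $\alpha _{2,1}=0$ and $\varphi =id_{k}$ are shown to be inner by the explicit central isomorphisms $c_{F}\left( f\right) =\alpha _{1,2}^{-1}f+\alpha _{1,2}^{-2}\gamma _{1,2}f^{2}$, which is exactly how the $\gamma _{1,2}$-family gets absorbed into $\mathfrak{Y}_{3}$. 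Your final answer agrees with the theorem only because this extra family happens to be inner, a fact your argument never encounters and therefore never proves; as written, your identification $\mathfrak{S}_{3}\cong U\left( k\mathbf{S}_{\mathbf{2}}\right) \leftthreetimes \mathrm{Aut}k$ is false and the computation of the quotient does not go through.

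A secondary inaccuracy: the correct non-degeneracy condition on the multiplication word is $\alpha _{1,2}\neq \pm \alpha _{2,1}$ (invertibility of the matrix acting on the span of $x_{1}x_{2},x_{2}x_{1}$, equivalently invertibility of $\alpha _{1,2}e+\alpha _{2,1}\left( 12\right) $ in $k\mathbf{S}_{\mathbf{2}}$), not ``$\alpha _{1,2}+\alpha _{2,1}\neq 0$ and $\left( \alpha _{1,2},\alpha _{2,1}\right) \neq \left( 0,0\right) $'' as you state; with $\alpha _{1,2}=\alpha _{2,1}\neq 0$ the map $\sigma _{F}$ is not injective on $F_{2}$, so Op2) fails even though your conditions hold. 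Your treatment of the inner part (the ``only if'' via testing against morphisms, the ``if'' via scaling degree-$i$ components by $\alpha _{1,2}^{i-1}$) is in the right spirit and matches the paper's $c_{F}$ when $\gamma _{1,2}=0$, but it must be redone for the full four-parameter family $\left( \varphi ,\gamma _{1,2},\alpha _{1,2},\alpha _{2,1}\right) $ to yield the theorem.
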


\begin{proof}
We assume that the system of words $W$ fulfills conditions Op1) and Op2) in
the variety $\Theta _{3}$. We will find the specific form of these words. As
in the proof of the Proposition \ref{F*i=Fi} we have that $w_{0,3}=0$, $%
w_{+,3}\left( x_{1},x_{2}\right) =x_{1}+x_{2}+p_{2}\left( x_{1},x_{2}\right) 
$, where $p_{2}\left( x_{1},x_{2}\right) \in \left( F^{\left( 3\right)
}\left( x_{1},x_{2}\right) \right) ^{2}$ and $p_{2}\left( x_{1},0\right)
=p_{2}\left( 0,x_{2}\right) =0$ fulfills. Therefore $p_{2}\left(
x_{1},x_{2}\right) =\gamma _{1,2}x_{1}x_{2}+\gamma _{2,1}x_{2}x_{1}$, where $%
\gamma _{1,2},\gamma _{2,1}\in k$. By Op2) $w_{+,3}\left( x_{1},x_{2}\right)
=w_{+,3}\left( x_{2},x_{1}\right) $ must fulfill, so $\gamma _{1,2}=\gamma
_{2,1}$.

By Op1) $w_{\lambda ,3}\left( x\right) =\varphi \left( \lambda \right)
x+\psi \left( \lambda \right) x^{2}\in F\left( x\right) $, where $\varphi
\left( \lambda \right) ,\psi \left( \lambda \right) \in k$, fulfills for
every $\lambda \in k$. $\lambda \ast \left( x_{1}\perp x_{2}\right) =\left(
\lambda \ast x_{1}\right) \perp \left( \lambda \ast x_{2}\right) $ must
fulfill in $F^{\left( 3\right) }\left( x_{1},x_{2}\right) $. 
\begin{equation*}
\lambda \ast \left( x_{1}\perp x_{2}\right) =\lambda \ast \left(
x_{1}+x_{2}+\gamma _{1,2}x_{1}x_{2}+\gamma _{1,2}x_{2}x_{1}\right) =
\end{equation*}%
\begin{equation*}
\varphi \left( \lambda \right) \left( x_{1}+x_{2}+\gamma
_{1,2}x_{1}x_{2}+\gamma _{1,2}x_{2}x_{1}\right) +\psi \left( \lambda \right)
\left( x_{1}+x_{2}+\gamma _{1,2}x_{1}x_{2}+\gamma _{1,2}x_{2}x_{1}\right)
^{2}=
\end{equation*}%
\begin{equation*}
\varphi \left( \lambda \right) x_{1}+\varphi \left( \lambda \right)
x_{2}+\varphi \left( \lambda \right) \gamma _{1,2}x_{1}x_{2}+\varphi \left(
\lambda \right) \gamma _{1,2}x_{2}x_{1}+
\end{equation*}%
\begin{equation*}
\psi \left( \lambda \right) x_{1}^{2}+\psi \left( \lambda \right)
x_{2}^{2}+\psi \left( \lambda \right) x_{1}x_{2}+\psi \left( \lambda \right)
x_{2}x_{1}.
\end{equation*}%
\begin{equation*}
\left( \lambda \ast x_{1}\right) \perp \left( \lambda \ast x_{2}\right)
=\left( \varphi \left( \lambda \right) x_{1}+\psi \left( \lambda \right)
x_{1}^{2}\right) \perp \left( \varphi \left( \lambda \right) x_{2}+\psi
\left( \lambda \right) x_{2}^{2}\right) =
\end{equation*}%
\begin{equation*}
\varphi \left( \lambda \right) x_{1}+\psi \left( \lambda \right)
x_{1}^{2}+\varphi \left( \lambda \right) x_{2}+\psi \left( \lambda \right)
x_{2}^{2}+
\end{equation*}%
\begin{equation*}
\gamma _{1,2}\left( \varphi \left( \lambda \right) x_{1}+\psi \left( \lambda
\right) x_{1}^{2}\right) \left( \varphi \left( \lambda \right) x_{2}+\psi
\left( \lambda \right) x_{2}^{2}\right) +
\end{equation*}%
\begin{equation*}
\gamma _{1,2}\left( \varphi \left( \lambda \right) x_{2}+\psi \left( \lambda
\right) x_{2}^{2}\right) \left( \varphi \left( \lambda \right) x_{1}+\psi
\left( \lambda \right) x_{1}^{2}\right) =
\end{equation*}%
\begin{equation*}
\varphi \left( \lambda \right) x_{1}+\psi \left( \lambda \right)
x_{1}^{2}+\varphi \left( \lambda \right) x_{2}+\psi \left( \lambda \right)
x_{2}^{2}+\gamma _{1,2}\left( \varphi \left( \lambda \right) \right)
^{2}x_{1}x_{2}+\gamma _{1,2}\left( \varphi \left( \lambda \right) \right)
^{2}x_{2}x_{1}.
\end{equation*}%
So $\varphi \left( \lambda \right) \gamma _{1,2}+\psi \left( \lambda \right)
=\gamma _{1,2}\left( \varphi \left( \lambda \right) \right) ^{2}$ and $\psi
\left( \lambda \right) =\gamma _{1,2}\left( \left( \varphi \left( \lambda
\right) \right) ^{2}-\varphi \left( \lambda \right) \right) $.

$\left( \mu +\lambda \right) \ast x=\left( \mu \ast x\right) \perp \left(
\lambda \ast x\right) $ must fulfill in $F^{\left( 3\right) }\left( x\right) 
$ for every $\mu ,\lambda \in k$.%
\begin{equation*}
\left( \mu +\lambda \right) \ast x\equiv \varphi \left( \mu +\lambda \right)
x\left( \func{mod}\left( F^{\left( 3\right) }\left( x\right) \right)
^{2}\right)
\end{equation*}%
\begin{equation*}
\left( \mu \ast x\right) \perp \left( \lambda \ast x\right) \equiv \left(
\varphi \left( \mu \right) +\varphi \left( \lambda \right) \right) x\left( 
\func{mod}\left( F^{\left( 3\right) }\left( x\right) \right) ^{2}\right) ,
\end{equation*}%
so $\varphi \left( \mu +\lambda \right) =\varphi \left( \mu \right) +\varphi
\left( \lambda \right) $. From $\left( \mu \lambda \right) \ast x=\mu \ast
\left( \lambda \ast x\right) $ we conclude that $\varphi \left( \mu \lambda
\right) =\varphi \left( \mu \right) \varphi \left( \lambda \right) $. We
consider $\mu \in k$. By condition Op1) $\sigma _{F^{\left( 3\right) }\left(
x\right) }:F^{\left( 3\right) }\left( x\right) \rightarrow \left( F^{\left(
3\right) }\left( x\right) \right) _{W}^{\ast }$ is an isomorphism. So $\mu
x=\lambda \ast x\perp \nu \ast \left( x\times x\right) $, where $\lambda
,\nu \in k$.

As in the proof of the Proposition \ref{F*i=Fi} we have that $w_{\cdot
}\left( x_{1},x_{2}\right) =\alpha _{1,2}x_{1}x_{2}+\alpha _{2,1}x_{2}x_{1}$.

By Proposition \ref{F*i=Fi} we have that $\mu x\equiv \lambda \ast x\equiv
\varphi \left( \lambda \right) x\left( \func{mod}\left( F^{\left( 3\right)
}\left( x\right) \right) ^{2}\right) $, so $\mu =\varphi \left( \lambda
\right) $. Therefore $\varphi \in \mathrm{Aut}k$.

We prove that if the system of words (\ref{words_list}) fulfills conditions
Op1) and Op2) then necessary%
\begin{equation*}
w_{0,3}=0,w_{+,3}\left( x_{1},x_{2}\right) =x_{1}+x_{2}+\gamma
_{1,2}x_{1}x_{2}+\gamma _{1,2}x_{2}x_{1},
\end{equation*}%
\begin{equation}
w_{\lambda ,3}\left( x_{1}\right) =\varphi \left( \lambda \right)
x_{1}+\gamma _{1,2}\left( \left( \varphi \left( \lambda \right) \right)
^{2}-\varphi \left( \lambda \right) \right) x_{1}^{2}\left( \lambda \in
k\right) ,  \label{real_list_nilp3}
\end{equation}%
\begin{equation*}
w_{\cdot ,3}\left( x_{1},x_{2}\right) =\alpha _{1,2}x_{1}x_{2}+\alpha
_{2,1}x_{2}x_{1},
\end{equation*}%
where $\alpha _{1,2},\alpha _{2,1},\gamma _{1,2}\in k$, $\varphi \in \mathrm{%
Aut}k$. If we will construct the isomorphism $\sigma _{F}:F\rightarrow
F_{W}^{\ast }$ for every $F\in \mathrm{Ob}\Theta _{3}^{0}$, then, as in the 
\cite{Tsurkov} we must demand $\alpha _{1,2}\neq \pm \alpha _{2,1}$.

Now we will prove that if the system of words $W$ has form (\ref%
{real_list_nilp3}) then this system of words fulfills conditions Op1) and
Op2). First of all we must check that if $H\in \Theta _{3}$ and the system
of words $W$ has form (\ref{real_list_nilp3}) then $H_{W}^{\ast }\in \Theta
_{3}$. We can check it by direct calculation. For example, if $h\in H\in
\Theta _{3}$, $\mu ,\lambda \in k$ then 
\begin{equation*}
\left( \mu \lambda \right) \ast h=\varphi \left( \mu \lambda \right)
h+\gamma _{1,2}\left( \left( \varphi \left( \mu \lambda \right) \right)
^{2}-\varphi \left( \mu \lambda \right) \right) h^{2},
\end{equation*}%
\begin{equation*}
\mu \ast \left( \lambda \ast h\right) =\mu \ast \left( \varphi \left(
\lambda \right) h+\gamma _{1,2}\left( \left( \varphi \left( \lambda \right)
\right) ^{2}-\varphi \left( \lambda \right) \right) h^{2}\right) =
\end{equation*}%
\begin{equation*}
\varphi \left( \mu \right) \left( \varphi \left( \lambda \right) h+\gamma
_{1,2}\left( \left( \varphi \left( \lambda \right) \right) ^{2}-\varphi
\left( \lambda \right) \right) h^{2}\right) +\gamma _{1,2}\left( \left(
\varphi \left( \mu \right) \right) ^{2}-\varphi \left( \mu \right) \right)
\left( \varphi \left( \lambda \right) \right) ^{2}h^{2}=
\end{equation*}%
\begin{equation*}
\varphi \left( \mu \right) \varphi \left( \lambda \right) h+\gamma
_{1,2}\left( \varphi \left( \mu \right) \left( \left( \varphi \left( \lambda
\right) \right) ^{2}-\varphi \left( \lambda \right) \right) +\left( \left(
\varphi \left( \mu \right) \right) ^{2}-\varphi \left( \mu \right) \right)
\left( \varphi \left( \lambda \right) \right) ^{2}\right) h^{2}=
\end{equation*}%
\begin{equation*}
\varphi \left( \mu \right) \varphi \left( \lambda \right) h+\gamma
_{1,2}\left( \left( \varphi \left( \mu \right) \right) ^{2}\left( \varphi
\left( \lambda \right) \right) ^{2}-\varphi \left( \mu \right) \varphi
\left( \lambda \right) \right) h^{2}.
\end{equation*}%
If $h_{1},h_{2}\in H\in \Theta _{3}$ then $\lambda \in k$%
\begin{equation*}
\left( \lambda \ast h_{1}\right) \times h_{2}=
\end{equation*}%
\begin{equation*}
\left( \varphi \left( \lambda \right) h_{1}+\gamma _{1,2}\left( \left(
\varphi \left( \lambda \right) \right) ^{2}-\varphi \left( \lambda \right)
\right) h_{1}^{2}\right) \times h_{2}=
\end{equation*}%
\begin{equation*}
\alpha _{1,2}\varphi \left( \lambda \right) h_{1}h_{2}+\alpha _{2,1}\varphi
\left( \lambda \right) h_{2}h_{1},
\end{equation*}%
\begin{equation*}
\lambda \ast \left( h_{1}\times h_{2}\right) =
\end{equation*}%
\begin{equation*}
\lambda \ast \left( \alpha _{1,2}h_{1}h_{2}+\alpha _{2,1}h_{2}h_{1}\right) =
\end{equation*}%
\begin{equation*}
\varphi \left( \lambda \right) \left( \alpha _{1,2}h_{1}h_{2}+\alpha
_{2,1}h_{2}h_{1}\right) .
\end{equation*}%
By similar way we can prove that $h_{1}\times \left( \lambda \ast
h_{2}\right) =\lambda \ast \left( h_{1}\times h_{2}\right) $. Other axioms
of the variety $\Theta _{3}$ fulfill in $H_{W}^{\ast }$ by the constructions
of the words (\ref{real_list_nilp3}).

It means that if the system of words $W$ has form (\ref{real_list_nilp3})
then for every $F=F\left( X\right) \in \mathrm{Ob}\Theta _{3}^{0}$ exists a
homomorphism $\sigma _{F}:F\rightarrow F_{W}^{\ast }$ such that $\sigma
_{F}\mid _{X}=id_{X}$. Our goal is to prove that these homomorphisms are
isomorphisms. For this purpose we will research the superpositions of these
homomorphisms. If the system of words $W$ has form (\ref{real_list_nilp3})
then it depends on the parameters $\varphi $, $\gamma _{1,2}$, $\alpha _{1,2}
$ and $\alpha _{2,1}$, where $\varphi \in \mathrm{Aut}k$, $\gamma
_{1,2},\alpha _{1,2},\alpha _{2,1}\in k$, $\alpha _{1,2}\neq \pm \alpha
_{2,1}$. We will denote the homomorphism $\sigma _{F}$ which corresponds to
the system of words $W$ with parameters $\varphi $, $\gamma _{1,2}$, $\alpha
_{1,2}$, $\alpha _{2,1}$ by $\sigma _{F}=\sigma _{F}\left( \varphi ,\gamma
_{1,2},\alpha _{1,2},\alpha _{2,1}\right) $, or for shortness, $\sigma
_{F}\left( \varphi ,\gamma _{1,2},\alpha _{1,2},\alpha _{2,1}\right) $. It
is clear that $\sigma _{F}\left( id_{k},0,1,0\right) =id_{F}$ for every $%
F\in \mathrm{Ob}\Theta _{3}^{0}$. We consider two system of of words $%
W^{\left( i\right) }$, $i=1,2$. Both these systems have form (\ref%
{real_list_nilp3}) and defined the system of homomorphisms $\left\{ \sigma
_{F}\left( \varphi ^{\left( i\right) },\gamma _{1,2}^{\left( i\right)
},\alpha _{1,2}^{\left( i\right) },\alpha _{2,1}^{\left( i\right) }\right)
\mid F\in \mathrm{Ob}\Theta _{3}^{0}\right\} $, $i=1,2$. We have in $F\left(
x\right) \in \mathrm{Ob}\Theta _{3}^{0}$ for every $\lambda \in k$%
\begin{equation*}
\sigma \left( \varphi ^{\left( 2\right) },\gamma _{1,2}^{\left( 2\right)
},\alpha _{1,2}^{\left( 2\right) },\alpha _{2,1}^{\left( 2\right) }\right)
\sigma \left( \varphi ^{\left( 1\right) },\gamma _{1,2}^{\left( 1\right)
},\alpha _{1,2}^{\left( 1\right) },\alpha _{2,1}^{\left( 1\right) }\right)
\left( \lambda x\right) =
\end{equation*}%
\begin{equation*}
\sigma \left( \varphi ^{\left( 2\right) },\gamma _{1,2}^{\left( 2\right)
},\alpha _{1,2}^{\left( 2\right) },\alpha _{2,1}^{\left( 2\right) }\right)
\left( \varphi ^{\left( 1\right) }\left( \lambda \right) x+\gamma
_{1,2}^{\left( 1\right) }\left( \left( \varphi ^{\left( 1\right) }\left(
\lambda \right) \right) ^{2}-\varphi ^{\left( 1\right) }\left( \lambda
\right) \right) x^{2}\right) =
\end{equation*}%
\begin{equation*}
\varphi ^{\left( 1\right) }\left( \lambda \right) \underset{\left( 2\right) }%
{\ast }x\underset{\left( 2\right) }{\perp }\gamma _{1,2}^{\left( 1\right)
}\left( \left( \varphi ^{\left( 1\right) }\left( \lambda \right) \right)
^{2}-\varphi ^{\left( 1\right) }\left( \lambda \right) \right) \underset{%
\left( 2\right) }{\ast }\left( x\underset{\left( 2\right) }{\times }x\right)
=
\end{equation*}%
\begin{equation*}
\left( \varphi ^{\left( 2\right) }\left( \varphi ^{\left( 1\right) }\left(
\lambda \right) \right) x+\gamma _{1,2}^{\left( 2\right) }\left( \left(
\varphi ^{\left( 2\right) }\left( \varphi ^{\left( 1\right) }\left( \lambda
\right) \right) \right) ^{2}-\varphi ^{\left( 2\right) }\left( \varphi
^{\left( 1\right) }\left( \lambda \right) \right) \right) x^{2}\right) 
\underset{\left( 2\right) }{\perp }
\end{equation*}%
\begin{equation*}
\varphi ^{\left( 2\right) }\left( \gamma _{1,2}^{\left( 1\right) }\left(
\left( \varphi ^{\left( 1\right) }\left( \lambda \right) \right)
^{2}-\varphi ^{\left( 1\right) }\left( \lambda \right) \right) \right)
\left( \alpha _{1,2}^{\left( 2\right) }+\alpha _{2,1}^{\left( 2\right)
}\right) x^{2}=
\end{equation*}%
\begin{equation*}
\left( \varphi ^{\left( 2\right) }\varphi ^{\left( 1\right) }\right) \left(
\lambda \right) x+
\end{equation*}%
\begin{equation*}
\left( \gamma _{1,2}^{\left( 2\right) }+\varphi ^{\left( 2\right) }\left(
\gamma _{1,2}^{\left( 1\right) }\right) \left( \alpha _{1,2}^{\left(
2\right) }+\alpha _{2,1}^{\left( 2\right) }\right) \right) \left( \left(
\left( \varphi ^{\left( 2\right) }\varphi ^{\left( 1\right) }\right) \left(
\lambda \right) \right) ^{2}-\left( \varphi ^{\left( 2\right) }\varphi
^{\left( 1\right) }\right) \left( \lambda \right) \right) x^{2}.
\end{equation*}%
Also we have in $F\left( x_{1},x_{2}\right) \in \mathrm{Ob}\Theta _{3}^{0}$%
\begin{equation*}
\sigma \left( \varphi ^{\left( 2\right) },\gamma _{1,2}^{\left( 2\right)
},\alpha _{1,2}^{\left( 2\right) },\alpha _{2,1}^{\left( 2\right) }\right)
\sigma \left( \varphi ^{\left( 1\right) },\gamma _{1,2}^{\left( 1\right)
},\alpha _{1,2}^{\left( 1\right) },\alpha _{2,1}^{\left( 1\right) }\right)
\left( x_{1}+x_{2}\right) =
\end{equation*}%
\begin{equation*}
\sigma \left( \varphi ^{\left( 2\right) },\gamma _{1,2}^{\left( 2\right)
},\alpha _{1,2}^{\left( 2\right) },\alpha _{2,1}^{\left( 2\right) }\right)
\left( x_{1}+x_{2}+\gamma _{1,2}^{\left( 1\right) }x_{1}x_{2}+\gamma
_{1,2}^{\left( 1\right) }x_{2}x_{1}\right) =
\end{equation*}%
\begin{equation*}
x_{1}\underset{\left( 2\right) }{\perp }x_{2}\underset{\left( 2\right) }{%
\perp }\gamma _{1,2}^{\left( 1\right) }\underset{\left( 2\right) }{\ast }%
\left( x_{1}\underset{\left( 2\right) }{\times }x_{2}\right) \underset{%
\left( 2\right) }{\perp }\gamma _{1,2}^{\left( 1\right) }\underset{\left(
2\right) }{\ast }\left( x_{2}\underset{\left( 2\right) }{\times }%
x_{1}\right) =
\end{equation*}%
\begin{equation*}
\left( x_{1}+x_{2}+\gamma _{1,2}^{\left( 2\right) }x_{1}x_{2}+\gamma
_{1,2}^{\left( 2\right) }x_{2}x_{1}\right) +
\end{equation*}%
\begin{equation*}
\varphi ^{\left( 2\right) }\left( \gamma _{1,2}^{\left( 1\right) }\right)
\left( \alpha _{1,2}^{\left( 2\right) }x_{1}x_{2}+\alpha _{2,1}^{\left(
2\right) }x_{2}x_{1}\right) +\varphi ^{\left( 2\right) }\left( \gamma
_{1,2}^{\left( 1\right) }\right) \left( \alpha _{1,2}^{\left( 2\right)
}x_{2}x_{1}+\alpha _{2,1}^{\left( 2\right) }x_{1}x_{2}\right) =
\end{equation*}%
\begin{equation*}
x_{1}+x_{2}+\left( \gamma _{1,2}^{\left( 2\right) }+\varphi ^{\left(
2\right) }\left( \gamma _{1,2}^{\left( 1\right) }\right) \left( \alpha
_{1,2}^{\left( 2\right) }+\alpha _{2,1}^{\left( 2\right) }\right) \right)
x_{1}x_{2}+
\end{equation*}%
\begin{equation*}
\left( \gamma _{1,2}^{\left( 2\right) }+\varphi ^{\left( 2\right) }\left(
\gamma _{1,2}^{\left( 1\right) }\right) \left( \alpha _{1,2}^{\left(
2\right) }+\alpha _{2,1}^{\left( 2\right) }\right) \right) x_{2}x_{1}.
\end{equation*}%
\begin{equation*}
\sigma \left( \varphi ^{\left( 2\right) },\gamma _{1,2}^{\left( 2\right)
},\alpha _{1,2}^{\left( 2\right) },\alpha _{2,1}^{\left( 2\right) }\right)
\sigma \left( \varphi ^{\left( 1\right) },\gamma _{1,2}^{\left( 1\right)
},\alpha _{1,2}^{\left( 1\right) },\alpha _{2,1}^{\left( 1\right) }\right)
\left( x_{1}x_{2}\right) =
\end{equation*}%
\begin{equation*}
\sigma \left( \varphi ^{\left( 2\right) },\gamma _{1,2}^{\left( 2\right)
},\alpha _{1,2}^{\left( 2\right) },\alpha _{2,1}^{\left( 2\right) }\right)
\left( \alpha _{1,2}^{\left( 1\right) }x_{1}x_{2}+\alpha _{2,1}^{\left(
1\right) }x_{2}x_{1}\right) =
\end{equation*}%
\begin{equation*}
\alpha _{1,2}^{\left( 1\right) }\underset{\left( 2\right) }{\ast }\left(
x_{1}\underset{\left( 2\right) }{\times }x_{2}\right) \underset{\left(
2\right) }{\perp }\alpha _{2,1}^{\left( 1\right) }\underset{\left( 2\right) }%
{\ast }\left( x_{2}\underset{\left( 2\right) }{\times }x_{1}\right) =
\end{equation*}%
\begin{equation*}
\varphi ^{\left( 2\right) }\left( \alpha _{1,2}^{\left( 1\right) }\right)
\left( \alpha _{1,2}^{\left( 2\right) }x_{1}x_{2}+\alpha _{2,1}^{\left(
2\right) }x_{2}x_{1}\right) +\varphi ^{\left( 2\right) }\left( \alpha
_{2,1}^{\left( 1\right) }\right) \left( \alpha _{1,2}^{\left( 2\right)
}x_{2}x_{1}+\alpha _{2,1}^{\left( 2\right) }x_{1}x_{2}\right) =
\end{equation*}%
\begin{equation*}
\left( \varphi ^{\left( 2\right) }\left( \alpha _{1,2}^{\left( 1\right)
}\right) \alpha _{1,2}^{\left( 2\right) }+\varphi ^{\left( 2\right) }\left(
\alpha _{2,1}^{\left( 1\right) }\right) \alpha _{2,1}^{\left( 2\right)
}\right) x_{1}x_{2}+\left( \varphi ^{\left( 2\right) }\left( \alpha
_{1,2}^{\left( 1\right) }\right) \alpha _{2,1}^{\left( 2\right) }+\varphi
^{\left( 2\right) }\left( \alpha _{2,1}^{\left( 1\right) }\right) \alpha
_{1,2}^{\left( 2\right) }\right) x_{2}x_{1}.
\end{equation*}%
Therefore%
\begin{equation*}
\sigma \left( \varphi ^{\left( 2\right) },\gamma _{1,2}^{\left( 2\right)
},\alpha _{1,2}^{\left( 2\right) },\alpha _{2,1}^{\left( 2\right) }\right)
\sigma \left( \varphi ^{\left( 1\right) },\gamma _{1,2}^{\left( 1\right)
},\alpha _{1,2}^{\left( 1\right) },\alpha _{2,1}^{\left( 1\right) }\right)
=\sigma \left( \varphi ^{\left( 3\right) },\gamma _{1,2}^{\left( 3\right)
},\alpha _{1,2}^{\left( 3\right) },\alpha _{2,1}^{\left( 3\right) }\right) ,
\end{equation*}%
where%
\begin{equation*}
\varphi ^{\left( 3\right) }=\varphi ^{\left( 2\right) }\varphi ^{\left(
1\right) },\gamma _{1,2}^{\left( 3\right) }=\gamma _{1,2}^{\left( 2\right)
}+\varphi ^{\left( 2\right) }\left( \gamma _{1,2}^{\left( 1\right) }\right)
\left( \alpha _{1,2}^{\left( 2\right) }+\alpha _{2,1}^{\left( 2\right)
}\right) 
\end{equation*}%
\begin{equation}
\alpha _{1,2}^{\left( 3\right) }=\varphi ^{\left( 2\right) }\left( \alpha
_{1,2}^{\left( 1\right) }\right) \alpha _{1,2}^{\left( 2\right) }+\varphi
^{\left( 2\right) }\left( \alpha _{2,1}^{\left( 1\right) }\right) \alpha
_{2,1}^{\left( 2\right) },  \label{homom_mult_3}
\end{equation}%
\begin{equation*}
\alpha _{2,1}^{\left( 3\right) }=\varphi ^{\left( 2\right) }\left( \alpha
_{1,2}^{\left( 1\right) }\right) \alpha _{2,1}^{\left( 2\right) }+\varphi
^{\left( 2\right) }\left( \alpha _{2,1}^{\left( 1\right) }\right) \alpha
_{1,2}^{\left( 2\right) }.
\end{equation*}%
Hence we have a decomposition%
\begin{equation}
\sigma \left( \varphi ,\gamma _{1,2},\alpha _{1,2},\alpha _{2,1}\right)
=\sigma \left( id_{k},\gamma _{1,2},1,0\right) \sigma \left( \varphi
,0,\alpha _{1,2},\alpha _{2,1}\right) .  \label{autom_decomp_3}
\end{equation}%
Also we have for every $F\in \mathrm{Ob}\Theta _{3}^{0}$ by (\ref%
{homom_mult_3})%
\begin{equation*}
\sigma _{F}\left( id_{k},-\gamma _{1,2},1,0\right) =\sigma _{F}\left(
id_{k},\gamma _{1,2},1,0\right) ^{-1},
\end{equation*}%
\begin{equation*}
\sigma _{F}\left( \varphi ^{-1},0,\beta _{1,2},\beta _{2,1}\right) =\sigma
_{F}\left( \varphi ,0,\alpha _{1,2},\alpha _{2,1}\right) ^{-1},
\end{equation*}%
where 
\begin{equation*}
\left( 
\begin{array}{cc}
\beta _{1,2} & \beta _{2,1} \\ 
\beta _{2,1} & \beta _{1,2}%
\end{array}%
\right) =\left( 
\begin{array}{cc}
\varphi ^{-1}\left( \alpha _{1,2}\right)  & \varphi ^{-1}\left( \alpha
_{2,1}\right)  \\ 
\varphi ^{-1}\left( \alpha _{2,1}\right)  & \varphi ^{-1}\left( \alpha
_{1,2}\right) 
\end{array}%
\right) ^{-1}.
\end{equation*}%
Therefore all homomorphisms $\sigma _{F}\left( \varphi ,\gamma _{1,2},\alpha
_{1,2},\alpha _{2,1}\right) $ for every $F\in \mathrm{Ob}\Theta _{3}^{0}$,
every $\varphi \in \mathrm{Aut}k$ and $\gamma _{1,2},\alpha _{1,2},\alpha
_{2,1}\in k$ such that $\alpha _{1,2}\neq \pm \alpha _{2,1}$ are
isomorphisms. So all systems of words $W$ which have form (\ref%
{real_list_nilp3}) fulfill conditions Op1) and Op2) and all automorphisms of 
$\Theta _{3}^{0}$ which corresponds to these systems of words are strongly
stable.

Now we must calculate the group $\mathfrak{Y}_{3}\cap \mathfrak{S}_{3}$. If $%
\Phi \in \mathfrak{S}_{3}$ then the system of words $W$ which corresponds to 
$\Phi $ fulfills conditions Op1) and Op2). From Proposition \ref{F*i=Fi} we
can conclude that $\sigma _{F}\left( F^{i}\right) =F^{i}$ for every $F\in 
\mathrm{Ob}\Theta _{3}^{0}$ and $i=1,2$. Therefore as in the \cite[Lemma 4.1
and Proposition 4.1]{Tsurkov} we can prove that if $\Phi $ is inner then $%
\alpha _{2,1}=0$, $\varphi =id_{k}$. Now we will prove that this conditions
are sufficient. We consider $\Phi \in \mathfrak{S}_{3}$ which corresponds to
the system of words $W$ which has form (\ref{real_list_nilp3}) such that $%
\alpha _{2,1}=0$ and $\varphi =id_{k}$. $\alpha _{1,2}\neq \pm \alpha _{2,1}$%
, so $\alpha _{1,2}\neq 0$. For every $F\in \mathrm{Ob}\Theta _{3}^{0}$ we
define the mapping $c_{F}:F\rightarrow F$ by this formula:%
\begin{equation*}
c_{F}\left( f\right) =\alpha _{1,2}^{-1}f+\alpha _{1,2}^{-2}\gamma
_{1,2}f^{2},
\end{equation*}%
where $f\in F$. We will prove by direct calculation that $c_{F}:F\rightarrow
F_{W}^{\ast }$ is an isomorphism. For every $f\in F$ and $\lambda \in k$ we
have%
\begin{equation*}
c_{F}\left( \lambda f\right) =\alpha _{1,2}^{-1}\lambda f+\alpha
_{1,2}^{-2}\gamma _{1,2}\lambda ^{2}f^{2},
\end{equation*}%
\begin{equation*}
\lambda \ast c_{F}\left( f\right) =\lambda \left( \alpha _{1,2}^{-1}f+\alpha
_{1,2}^{-2}\gamma _{1,2}f^{2}\right) +\gamma _{1,2}\alpha _{1,2}^{-2}\left(
\lambda ^{2}-\lambda \right) f^{2}=c_{F}\left( \lambda f\right) .
\end{equation*}%
For every $f_{1},f_{2}\in F$ we have%
\begin{equation*}
c_{F}\left( f_{1}+f_{2}\right) =\alpha _{1,2}^{-1}\left( f_{1}+f_{2}\right)
+\alpha _{1,2}^{-2}\gamma _{1,2}\left( f_{1}+f_{2}\right) ^{2}=
\end{equation*}%
\begin{equation*}
\alpha _{1,2}^{-1}f_{1}+\alpha _{1,2}^{-1}f_{2}+\alpha _{1,2}^{-2}\gamma
_{1,2}f_{1}^{2}+\alpha _{1,2}^{-2}\gamma _{1,2}f_{1}f_{2}+\alpha
_{1,2}^{-2}\gamma _{1,2}f_{2}f_{1}+\alpha _{1,2}^{-2}\gamma _{1,2}f_{2}^{2},
\end{equation*}%
\begin{equation*}
c_{F}\left( f_{1}\right) \perp c_{F}\left( f_{2}\right) =\left( \alpha
_{1,2}^{-1}f_{1}+\alpha _{1,2}^{-2}\gamma _{1,2}f_{1}^{2}\right) \perp
\left( \alpha _{1,2}^{-1}f_{2}+\alpha _{1,2}^{-2}\gamma
_{1,2}f_{2}^{2}\right) =
\end{equation*}%
\begin{equation*}
\alpha _{1,2}^{-1}f_{1}+\alpha _{1,2}^{-2}\gamma _{1,2}f_{1}^{2}+\alpha
_{1,2}^{-1}f_{2}+\alpha _{1,2}^{-2}\gamma _{1,2}f_{2}^{2}+\alpha
_{1,2}^{-2}\gamma _{1,2}f_{1}f_{2}+\alpha _{1,2}^{-2}\gamma _{1,2}f_{2}f_{1}%
\text{,}
\end{equation*}%
\begin{equation*}
c_{F}\left( f_{1}f_{2}\right) =\alpha _{1,2}^{-1}f_{1}f_{2},
\end{equation*}%
\begin{equation*}
c_{F}\left( f_{1}\right) \times c_{F}\left( f_{2}\right) =\left( \alpha
_{1,2}^{-1}f_{1}+\alpha _{1,2}^{-2}\gamma _{1,2}f_{1}^{2}\right) \times
\left( \alpha _{1,2}^{-1}f_{2}+\alpha _{1,2}^{-2}\gamma
_{1,2}f_{2}^{2}\right) =\alpha _{1,2}^{-1}f_{1}f_{2}.
\end{equation*}%
Therefore $c_{F}$ is a homomorphism. By Proposition \ref{F*i=Fi} we have $%
F^{i}=\left( F_{W}^{\ast }\right) ^{i}$ for $i=1,2$. By (\ref%
{real_list_nilp3}) $\lambda \ast \left( f+\left( F_{W}^{\ast }\right)
^{2}\right) =\lambda f+\left( F_{W}^{\ast }\right) ^{2}$, so we have for
every $F\left( X\right) \in \mathrm{Ob}\Theta _{3}^{0}$ that $\mathrm{sp}%
\left\{ c_{F}\left( x\right) +\left( F_{W}^{\ast }\right) ^{2}\mid x\in
X\right\} =F_{W}^{\ast }/\left( F_{W}^{\ast }\right) ^{2}$. By \cite[%
Proposition 4.1]{TsurkovAutomEquiv} $F_{W}^{\ast }\in \Theta _{3}$. So $%
\left\langle c_{F}\left( x\right) +\left( F_{W}^{\ast }\right) ^{2}\mid x\in
X\right\rangle _{alg}=F_{W}^{\ast }$ and $c_{F}$ is an epimorphism. By the
dimensional reason $c_{F}$ is an isomorphism. It is clear that $c_{F}\psi
=\psi c_{D}$ fulfills for every $F,D\in \mathrm{Ob}\Theta _{3}^{0}$ and
every $\left( \psi :D\rightarrow F\right) \in \mathrm{Mor}\Theta _{3}^{0}$.
Therefore $\Phi $ is inner.

After this by using of the (\ref{autom_decomp_3}) we prove as in the \cite%
{Tsurkov} that 
\begin{equation*}
\mathfrak{A}_{3}\mathfrak{/Y}_{3}\mathfrak{\cong }\left( U\left( k\mathbf{S}%
_{\mathbf{2}}\right) \mathfrak{/}U\left( k\left\{ e\right\} \right) \right) 
\mathfrak{\leftthreetimes }\mathrm{Aut}k\mathfrak{\cong }k^{\ast }\mathfrak{%
\leftthreetimes }\mathrm{Aut}k.
\end{equation*}
\end{proof}

\subsection{Group $\mathfrak{A}_{4}\mathfrak{/Y}_{4}$.}

\begin{theorem}
$\mathfrak{A_{4}/Y}_{4}\cong k^{\ast }\mathfrak{\leftthreetimes }\mathrm{Aut}%
k$.
\end{theorem}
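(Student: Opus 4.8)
The plan is to prove that the homomorphism $\widetilde{\mathcal{S}}_4\colon\mathfrak{A}_4/\mathfrak{Y}_4\to\mathfrak{A}_3/\mathfrak{Y}_3$ obtained from Propositions \ref{sshomom} and \ref{i_homom} is an isomorphism; combined with the theorem of Subsection \ref{nilp3} this gives $\mathfrak{A}_4/\mathfrak{Y}_4\cong k^{\ast}\leftthreetimes\mathrm{Aut}\,k$. (One could instead repeat the argument of Subsection \ref{nilp3} verbatim --- determine all admissible systems of words in $\Theta_4$, compute their superposition law in the style of \eqref{homom_mult_3}, extract a decomposition as in \eqref{autom_decomp_3}, and identify $\mathfrak{Y}_4\cap\mathfrak{S}_4$; the reduction through $\widetilde{\mathcal{S}}_4$ just organises the same computation, keeping only the degree-$3$ bookkeeping that is genuinely new.)

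Surjectivity of $\widetilde{\mathcal{S}}_4$ is the easy half. By \eqref{autom_decomp_3} every class of $\mathfrak{A}_3/\mathfrak{Y}_3$ has a representative $\Psi\in\mathfrak{S}_3$ with words $w_{\lambda,3}(x)=\varphi(\lambda)x$, $w_{+,3}(x_1,x_2)=x_1+x_2$, $w_{\cdot,3}(x_1,x_2)=\alpha_{1,2}x_1x_2+\alpha_{2,1}x_2x_1$, where $\varphi\in\mathrm{Aut}\,k$ and $\alpha_{1,2}\neq\pm\alpha_{2,1}$. I take the same list as a system of words $W$ in $\Theta_4$. For $H\in\Theta_4$ the algebra $H_W^{\ast}$ satisfies every nilpotency identity automatically, since $w_{\cdot}$ has no linear part and hence any $\times$-product of four elements lies in $H^{4}=0$; the remaining linear-algebra axioms are checked by the same direct computation as before, so $H_W^{\ast}\in\Theta_4$. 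Thus for every $F\in\mathrm{Ob}\,\Theta_4^0$ there is a homomorphism $\sigma_F\colon F\to F_W^{\ast}$ identical on generators, and the superposition of these homomorphisms obeys a group law (as in Subsection \ref{nilp3}, with all higher-degree coefficients vanishing) with identity $\sigma_F(id_k,\dots)=id_F$ and with inverses, so each $\sigma_F$ is an isomorphism and $W$ fulfils Op1) and Op2). By Corollary \ref{rez} the strongly stable automorphism $\Phi\in\mathfrak{S}_4$ defined by $W$ satisfies $\mathcal{S}_4(\Phi)=\Psi$; hence $\widetilde{\mathcal{S}}_4$ is onto.

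For injectivity I need the converse of Proposition \ref{i_homom} at $n=4$ (since $\widetilde{\mathcal{S}}_4$ is induced by $\mathcal{S}_4$ under $\mathfrak{A}_4/\mathfrak{Y}_4\cong\mathfrak{S}_4/(\mathfrak{S}_4\cap\mathfrak{Y}_4)$): if $\Phi\in\mathfrak{S}_4$ and $\mathcal{S}_4(\Phi)\in\mathfrak{Y}_3$, then $\Phi\in\mathfrak{Y}_4$. First, exactly as in Subsection \ref{nilp3}, conditions Op1) and Op2) in $\Theta_4$ pin down the admissible form of the words of $\Phi$: $w_{0,4}=0$; $w_{+,4}=x_1+x_2+(\text{a symmetric degree-}2\text{ part})+(\text{a degree-}3\text{ part})$, each higher part vanishing when a variable is set to $0$; $w_{\lambda,4}(x)=\varphi(\lambda)x+(\text{degree-}2)+(\text{degree-}3)$ with $\varphi\in\mathrm{Aut}\,k$ (multiplicativity, additivity and surjectivity of $\varphi$ follow as before); and $w_{\cdot,4}=\alpha_{1,2}x_1x_2+\alpha_{2,1}x_2x_1+(\text{degree-}3)$ with $\alpha_{1,2}\neq\pm\alpha_{2,1}$; moreover the scalar and distributivity axioms express the higher-degree coefficients through $\varphi$, $\alpha_{1,2}$, $\alpha_{2,1}$ and a few free parameters, generalising the relations found in Subsection \ref{nilp3}. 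By Corollary \ref{rez} the words of $\mathcal{S}_4(\Phi)$ are the degree-$\le2$ truncations of these, so $\mathcal{S}_4(\Phi)\in\mathfrak{Y}_3$ forces, by the theorem of Subsection \ref{nilp3}, $\alpha_{2,1}=0$ and $\varphi=id_k$.

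It remains to see that $\alpha_{2,1}=0$ and $\varphi=id_k$ already make $\Phi$ inner, which by Criterion \ref{inner_stable} means producing, for each $F\in\mathrm{Ob}\,\Theta_4^0$, an isomorphism $c_F\colon F\to F_W^{\ast}$ given by a single universal polynomial $c_F(f)=\alpha_{1,2}^{-1}f+(\text{degree-}2\text{ in }f)+(\text{degree-}3\text{ in }f)$ with $c_F\psi=\psi c_D$ for all $\psi$; the intertwining is automatic from the universality of the formula once $c_F$ is a homomorphism $F\to F_W^{\ast}$, and then $c_F$ is an epimorphism by the generation argument and an isomorphism by dimension, just as for $\Theta_3^0$. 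The real work here --- and the main obstacle of the proof --- is to solve the linear system for the coefficients of the degree-$2$ and degree-$3$ corrections in $c_F$ so that $c_F$ sends $\lambda f$, $f_1+f_2$, $f_1f_2$ to $\lambda\ast c_F(f)$, $c_F(f_1)\perp c_F(f_2)$, $c_F(f_1)\times c_F(f_2)$; this is exactly where the hypotheses $\alpha_{2,1}=0$ and $\varphi=id_k$ enter. Combining this with Proposition \ref{i_homom} and the theorem of Subsection \ref{nilp3} identifies $\mathfrak{Y}_4\cap\mathfrak{S}_4$ with the systems of words having $\alpha_{2,1}=0$, $\varphi=id_k$, so $\mathfrak{A}_4/\mathfrak{Y}_4\cong\mathfrak{S}_4/(\mathfrak{Y}_4\cap\mathfrak{S}_4)$ depends only on $\varphi$ and on $(\alpha_{1,2},\alpha_{2,1})$ modulo the $k^{\ast}$-action $ae+b(12)\mapsto(a+b,a-b)$ on $k\mathbf{S}_{\mathbf{2}}$; as in Subsection \ref{nilp3} and \cite{Tsurkov} this yields $\mathfrak{A}_4/\mathfrak{Y}_4\cong\left(U(k\mathbf{S}_{\mathbf{2}})/U(k\{e\})\right)\leftthreetimes\mathrm{Aut}\,k\cong k^{\ast}\leftthreetimes\mathrm{Aut}\,k$.
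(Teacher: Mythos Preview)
Your strategy is sound and, once the deferred computations are actually carried out, coincides with the paper's proof; the reorganisation through $\widetilde{\mathcal{S}}_4$ is only a repackaging and does not save any work. The paper proceeds directly: using Corollary~\ref{rez} it pins down the complete form of the admissible words in $\Theta_4$ (the degree-$3$ parts are governed by three free parameters $\gamma_{1,2}$, $\gamma_{1(2,2)}$, $\gamma_{(1,1)2}$, with all other degree-$3$ coefficients forced by associativity of $\perp$, the two distributive laws, and the scalar axiom), verifies Op2) by exhibiting explicit superposition and inverse formulas, and then for $\alpha_{2,1}=0$, $\varphi=id_k$ writes down
\[
c_F(f)=\alpha_{1,2}^{-1}f+\alpha_{1,2}^{-2}\gamma_{1,2}f^{2}+\alpha_{1,2}^{-3}\bigl(\gamma_{1,2}^{2}+\gamma_{1(2,2)}\bigr)f(f^{2})+\alpha_{1,2}^{-3}\bigl(\gamma_{1,2}^{2}+\gamma_{(1,1)2}\bigr)(f^{2})f
\]
and checks it is an isomorphism $F\to F_W^{\ast}$.

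The point where your outline is genuinely incomplete is the last step: you describe the construction of $c_F$ as ``solving a linear system'' but do not verify that the system is consistent. This is not a formality --- it is exactly the content of the theorem at degree $3$, and it hinges on the specific relations among the degree-$3$ coefficients of $w_{+,4}$, $w_{\cdot,4}$, $w_{\lambda,4}$ that Op2) imposes (for instance $\alpha_{(1,2)1}=\alpha_{(2,1)1}=\alpha_{1(1,2)}=\alpha_{1(2,1)}=0$ and $\alpha_{(1,1)2}=-\alpha_{1,2}\gamma_{1,2}$, etc.). Without first establishing those relations, the ansatz $c_F(f)=\alpha_{1,2}^{-1}f+(\text{degree }2)+(\text{degree }3)$ has no reason to intertwine $\cdot$ with $\times$. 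So to turn your outline into a proof you must either derive those relations (which is most of the paper's computation) or produce $c_F$ and check it directly against the fully determined $W$; either way you end up doing precisely what the paper does.
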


\begin{proof}
We assume that the system of words $W$ fulfills conditions Op1) and Op2) in
the variety $\Theta _{4}$. We will find the specific form of these words. By
Corollary \ref{rez} from Proposition \ref{sshomom} we have $%
w_{+,4}=w_{+,3}+r_{+,4}$, where $w_{+,4}\in F^{\left( 4\right) }\left(
x_{1},x_{2}\right) $, $w_{+,3}=x_{1}+x_{2}+\gamma _{1,2}x_{1}x_{2}+\gamma
_{1,2}x_{2}x_{1}\in \bigoplus\limits_{i=1}^{2}\left( F^{\left( 4\right)
}\left( x_{1},x_{2}\right) \right) _{i}$, $r_{+,4}\in \left( F^{\left(
4\right) }\left( x_{1},x_{2}\right) \right) _{3}$. We denote $%
r_{+,4}=\sum\limits_{i_{1},i_{2},i_{3}=1}^{2}\gamma _{\left(
i_{1},i_{2}\right) i_{3}}\left( x_{i_{1}}x_{i_{2}}\right)
x_{i_{3}}+\sum\limits_{i_{1},i_{2},i_{3}=1}^{2}\gamma _{i_{1}\left(
i_{2},i_{3}\right) }x_{i_{1}}\left( x_{i_{2}}x_{i_{3}}\right) $, where $%
\gamma _{\left( i_{1},i_{2}\right) i_{3}},\gamma _{i_{1}\left(
i_{2},i_{3}\right) }\in k$. From $w_{+,4}\left( x_{1},0\right) =x_{1}$ and $%
w_{+,4}\left( 0,x_{2}\right) =x_{2}$ we conclude that 
\begin{equation}
r_{+,4}\left( x_{1},0\right) =r_{+,4}\left( 0,x_{2}\right) =0.
\label{add_0_concl_4}
\end{equation}%
From 
\begin{equation}
w_{+,4}\left( x_{1},x_{2}\right) =w_{+,4}\left( x_{2},x_{1}\right) 
\label{add_comm_4}
\end{equation}%
we conclude that $\gamma _{\left( 1,1\right) 2}=\gamma _{\left( 2,2\right) 1}
$ and so on. 
\begin{equation}
w_{+,4}\left( x_{1},w_{+,4}\left( x_{2},x_{3}\right) \right) =w_{+,4}\left(
w_{+,4}\left( x_{1},x_{2}\right) ,x_{3}\right)   \label{add_ass_4_1}
\end{equation}%
must fulfills in $F^{\left( 4\right) }\left( x_{1},x_{2},x_{3}\right) $. We
can write the left side of this equation as $w_{+,3}\left(
x_{1},w_{+,4}\left( x_{2},x_{3}\right) \right) +r_{+,4}\left(
x_{1},w_{+,4}\left( x_{2},x_{3}\right) \right) $. By nilpotence $%
r_{+,4}\left( x_{1},w_{+,4}\left( x_{2},x_{3}\right) \right) =r_{+,4}\left(
x_{1},x_{2}+x_{3}\right) $. The terms of degree $3$ of the $w_{+,3}\left(
x_{1},w_{+,4}\left( x_{2},x_{3}\right) \right) $ are equal 
\begin{equation*}
r_{+,4}\left( x_{2},x_{3}\right) +\gamma _{1,2}^{2}x_{1}\left(
x_{2}x_{3}+x_{3}x_{2}\right) +\gamma _{1,2}^{2}\left(
x_{2}x_{3}+x_{3}x_{2}\right) x_{1}.
\end{equation*}%
Reciprocally the terms of degree $3$ of the right side of equation (\ref%
{add_ass_4_1}) are equal%
\begin{equation*}
r_{+,4}\left( x_{1},x_{2}\right) +\gamma _{1,2}^{2}\left(
x_{1}x_{2}+x_{2}x_{1}\right) x_{3}+\gamma _{1,2}^{2}x_{3}\left(
x_{1}x_{2}+x_{2}x_{1}\right) +r_{+,4}\left( x_{1}+x_{2},x_{3}\right) .
\end{equation*}%
Therefore from (\ref{add_ass_4_1}) we can conclude%
\begin{equation*}
\gamma _{1,2}^{2}x_{1}\left( x_{2}x_{3}+x_{3}x_{2}\right) +\gamma
_{1,2}^{2}\left( x_{2}x_{3}+x_{3}x_{2}\right) x_{1}+
\end{equation*}%
\begin{equation*}
\left( r_{+,4}\left( x_{1},x_{2}+x_{3}\right) -r_{+,4}\left(
x_{1},x_{2}\right) -r_{+,4}\left( x_{1},x_{3}\right) \right) =
\end{equation*}%
\begin{equation}
\gamma _{1,2}^{2}\left( x_{1}x_{2}+x_{2}x_{1}\right) x_{3}+\gamma
_{1,2}^{2}x_{3}\left( x_{1}x_{2}+x_{2}x_{1}\right) +  \label{add_ass_4_2}
\end{equation}%
\begin{equation*}
\left( r_{+,4}\left( x_{1}+x_{2},x_{3}\right) -r_{+,4}\left(
x_{1},x_{3}\right) -r_{+,4}\left( x_{2},x_{3}\right) \right) .
\end{equation*}%
By (\ref{add_0_concl_4}) all terms of (\ref{add_ass_4_2}) have entries of $%
x_{1},x_{2}$ and $x_{3}$. If we compare the coefficients of $\left(
x_{1}x_{2}\right) x_{3}$ in (\ref{add_ass_4_2}), then we achieve 
\begin{equation}
\gamma _{\left( 1,2\right) 2}=\gamma _{1,2}^{2}+\gamma _{\left( 1,1\right)
2}.  \label{add_ass_4_3_1}
\end{equation}%
From comparison of the coefficients of $\left( x_{2}x_{1}\right) x_{3}$ we
achieve by using of (\ref{add_comm_4}) 
\begin{equation}
\gamma _{\left( 1,2\right) 1}=\gamma _{1,2}^{2}+\gamma _{\left( 1,1\right)
2}.  \label{add_ass_4_3_2}
\end{equation}%
By consideration of the coefficients of $\left( x_{1}x_{3}\right) x_{2}$ and
of the coefficients of $\left( x_{3}x_{1}\right) x_{2}$ we conclude $\gamma
_{\left( 1,2\right) 2}=\gamma _{\left( 1,2\right) 1}$, but this is a
corollary of (\ref{add_ass_4_3_1}) and (\ref{add_ass_4_3_2}). If we compare
the coefficients of $\left( x_{2}x_{3}\right) x_{1}$ we conclude the
equation (\ref{add_ass_4_3_2}) and if we compare the coefficients of $\left(
x_{3}x_{2}\right) x_{1}$ we conclude the equation (\ref{add_ass_4_3_1}).
From comparison of the coefficients of $x_{1}\left( x_{2}x_{3}\right) $ we
achieve 
\begin{equation}
\gamma _{1,2}^{2}+\gamma _{1\left( 2,2\right) }=\gamma _{1\left( 1,2\right)
}.  \label{add_ass_4_3_3}
\end{equation}%
By consideration of the coefficients of $x_{1}\left( x_{3}x_{2}\right) $ we
achieve 
\begin{equation}
\gamma _{1,2}^{2}+\gamma _{1\left( 2,2\right) }=\gamma _{1\left( 2,1\right)
}.  \label{add_ass_4_3_4}
\end{equation}%
If we consider the coefficients of $x_{2}\left( x_{1}x_{3}\right) $ or the
coefficients of $x_{2}\left( x_{3}x_{1}\right) $, then we have $\gamma
_{1\left( 2,1\right) }=\gamma _{1\left( 1,2\right) }$, but this is a
corollary of (\ref{add_ass_4_3_3}) and (\ref{add_ass_4_3_4}). If we compare
the coefficients of $x_{3}\left( x_{1}x_{2}\right) $, then we achieve the
equation (\ref{add_ass_4_3_4}), and if we compare the coefficients of $%
x_{3}\left( x_{2}x_{1}\right) $, then we achieve the equation (\ref%
{add_ass_4_3_3}). By (\ref{add_ass_4_3_1}) - (\ref{add_ass_4_3_4}) we have
that 
\begin{equation*}
r_{+,4}\left( x_{1},x_{2}\right) =\gamma _{\left( 1,1\right)
2}x_{1}^{2}x_{2}+\left( \gamma _{1,2}^{2}+\gamma _{\left( 1,1\right)
2}\right) \left( x_{1}x_{2}\right) x_{2}+\left( \gamma _{1,2}^{2}+\gamma
_{\left( 1,1\right) 2}\right) \left( x_{1}x_{2}\right) x_{1}+
\end{equation*}%
\begin{equation*}
\gamma _{\left( 1,1\right) 2}x_{2}^{2}x_{1}+\left( \gamma _{1,2}^{2}+\gamma
_{\left( 1,1\right) 2}\right) \left( x_{2}x_{1}\right) x_{1}+\left( \gamma
_{1,2}^{2}+\gamma _{\left( 1,1\right) 2}\right) \left( x_{2}x_{1}\right)
x_{2}+
\end{equation*}%
\begin{equation*}
\gamma _{1\left( 2,2\right) }x_{1}x_{2}^{2}+\left( \gamma _{1,2}^{2}+\gamma
_{1\left( 2,2\right) }\right) x_{1}\left( x_{1}x_{2}\right) +\left( \gamma
_{1,2}^{2}+\gamma _{1\left( 2,2\right) }\right) x_{1}\left(
x_{2}x_{1}\right) +
\end{equation*}%
\begin{equation}
\gamma _{1\left( 2,2\right) }x_{2}x_{1}^{2}+\left( \gamma _{1,2}^{2}+\gamma
_{1\left( 2,2\right) }\right) x_{2}\left( x_{2}x_{1}\right) +\left( \gamma
_{1,2}^{2}+\gamma _{1\left( 2,2\right) }\right) x_{2}\left(
x_{1}x_{2}\right) .  \label{add_4}
\end{equation}

By Corollary \ref{rez} from Proposition \ref{sshomom} we have $w_{\cdot
,4}=w_{\cdot ,3}+r_{\cdot ,4}$, where $w_{\cdot ,4}\in F^{\left( 4\right)
}\left( x_{1},x_{2}\right) $, $w_{\cdot ,3}=\alpha _{1,2}x_{1}x_{2}+\alpha
_{2,1}x_{2}x_{1}\in \bigoplus\limits_{i=1}^{2}\left( F^{\left( 4\right)
}\left( x_{1},x_{2}\right) \right) _{i}$, $r_{\cdot ,4}\in \left( F^{\left(
4\right) }\left( x_{1},x_{2}\right) \right) _{3}$. We denote $r_{\cdot
,4}=\sum\limits_{i_{1},i_{2},i_{3}=1}^{2}\alpha _{\left( i_{1},i_{2}\right)
i_{3}}\left( x_{i_{1}}x_{i_{2}}\right)
x_{i_{3}}+\sum\limits_{i_{1},i_{2},i_{3}=1}^{2}\alpha _{i_{1}\left(
i_{2},i_{3}\right) }x_{i_{1}}\left( x_{i_{2}}x_{i_{3}}\right) $, where $%
\alpha _{\left( i_{1},i_{2}\right) i_{3}},\alpha _{i_{1}\left(
i_{2},i_{3}\right) }\in k$. From $w_{\cdot ,4}\left( x_{1},0\right)
=w_{\cdot ,4}\left( 0,x_{2}\right) =0$ we conclude that 
\begin{equation}
r_{\cdot ,4}\left( x_{1},0\right) =r_{\cdot ,4}\left( 0,x_{2}\right) =0.
\label{mul_0_concl_4}
\end{equation}%
\begin{equation}
w_{\cdot ,4}\left( w_{+,4}\left( x_{1},x_{2}\right) ,x_{3}\right)
=w_{+,4}\left( w_{\cdot ,4}\left( x_{1},x_{3}\right) ,w_{\cdot ,4}\left(
x_{2},x_{3}\right) \right)   \label{distr_left_4_1}
\end{equation}%
must fulfills in $F^{\left( 4\right) }\left( x_{1},x_{2},x_{3}\right) $ by
Op2). By nilpotence 
\begin{equation*}
w_{+,4}\left( w_{\cdot ,4}\left( x_{1},x_{3}\right) ,w_{\cdot ,4}\left(
x_{2},x_{3}\right) \right) =w_{\cdot ,4}\left( x_{1},x_{3}\right) +w_{\cdot
,4}\left( x_{2},x_{3}\right) .
\end{equation*}%
If we compare the terms of degree $3$ of the equation (\ref{distr_left_4_1}%
), then we have%
\begin{equation*}
w_{\cdot ,3}\left( \gamma _{1,2}x_{1}x_{2}+\gamma
_{1,2}x_{2}x_{1},x_{3}\right) +r_{\cdot ,4}\left( x_{1}+x_{2},x_{3}\right)
=r_{\cdot ,4}\left( x_{1},x_{3}\right) +r_{\cdot ,4}\left(
x_{2},x_{3}\right) 
\end{equation*}%
or%
\begin{equation*}
\alpha _{1,2}\gamma _{1,2}\left( x_{1}x_{2}\right) x_{3}+\alpha _{1,2}\gamma
_{1,2}\left( x_{2}x_{1}\right) x_{3}+\alpha _{2,1}\gamma _{1,2}x_{3}\left(
x_{1}x_{2}\right) +\alpha _{2,1}\gamma _{1,2}x_{3}\left( x_{2}x_{1}\right) +
\end{equation*}%
\begin{equation}
\left( r_{\cdot ,4}\left( x_{1}+x_{2},x_{3}\right) -r_{\cdot ,4}\left(
x_{1},x_{3}\right) -r_{\cdot ,4}\left( x_{2},x_{3}\right) \right) =0.
\label{distr_left_4_2}
\end{equation}%
By (\ref{add_0_concl_4}) all terms of (\ref{add_ass_4_2}) have entries of $%
x_{1},x_{2}$ and $x_{3}$. If we compare the coefficients of $\left(
x_{1}x_{2}\right) x_{3}$ we achieve 
\begin{equation}
\alpha _{1,2}\gamma _{1,2}+\alpha _{\left( 1,1\right) 2}=0.
\label{distr_left_4_3_1}
\end{equation}%
Other comparisons in the order, which was described above, give us 
\begin{equation}
\alpha _{\left( 1,2\right) 1}=0,  \label{distr_left_4_3_2}
\end{equation}%
\begin{equation}
\alpha _{\left( 2,1\right) 1}=0,  \label{distr_left_4_3_3}
\end{equation}%
\begin{equation}
\alpha _{1\left( 1,2\right) }=0,  \label{distr_left_4_3_4}
\end{equation}%
\begin{equation}
\alpha _{1\left( 2,1\right) }=0,  \label{distr_left_4_3_5}
\end{equation}%
\begin{equation}
\alpha _{2,1}\gamma _{1,2}+\alpha _{2\left( 1,1\right) }=0.
\label{distr_left_4_3_6}
\end{equation}%
Here we omit the repeated equations.

Also%
\begin{equation}
w_{\cdot ,4}\left( x_{1},w_{+,4}\left( x_{2},x_{3}\right) \right)
=w_{+,4}\left( w_{\cdot ,4}\left( x_{1},x_{2}\right) ,w_{\cdot ,4}\left(
x_{1},x_{3}\right) \right)   \label{distr_right_4_1}
\end{equation}%
must fulfills in $F_{3}^{\left( 4\right) }$. As above we conclude from this
equation%
\begin{equation*}
\alpha _{1,2}\gamma _{1,2}x_{1}\left( x_{2}x_{3}\right) +\alpha _{1,2}\gamma
_{1,2}x_{1}\left( x_{3}x_{2}\right) +\alpha _{2,1}\gamma _{1,2}\left(
x_{2}x_{3}\right) x_{1}+\alpha _{2,1}\gamma _{1,2}\left( x_{3}x_{2}\right)
x_{1}=
\end{equation*}%
\begin{equation}
\left( r_{\cdot ,4}\left( x_{1},x_{2}+x_{3}\right) -r_{\cdot ,4}\left(
x_{1},x_{2}\right) -r_{\cdot ,4}\left( x_{1},x_{3}\right) \right) =0.
\label{distr_right_4_2}
\end{equation}%
After this, we have by comparisons of the coefficients of suitable monomials%
\begin{equation}
\alpha _{\left( 1,2\right) 2}=0,  \label{distr_right_4_3_1}
\end{equation}%
\begin{equation}
\alpha _{\left( 2,1\right) 2}=0,  \label{distr_right_4_3_2}
\end{equation}%
\begin{equation}
\alpha _{2,1}\gamma _{1,2}+\alpha _{\left( 2,2\right) 1}=0,
\label{distr_right_4_3_3}
\end{equation}%
\begin{equation}
\alpha _{1,2}\gamma _{1,2}+\alpha _{1\left( 2,2\right) }=0,
\label{distr_right_4_3_4}
\end{equation}%
\begin{equation}
\alpha _{2\left( 1,2\right) }=0,  \label{distr_right_4_3_5}
\end{equation}%
\begin{equation}
\alpha _{2\left( 2,1\right) }=0.  \label{distr_right_4_3_6}
\end{equation}%
Therefore%
\begin{equation}
r_{\cdot ,4}\left( x_{1},x_{2}\right) =-\alpha _{1,2}\gamma
_{1,2}x_{1}^{2}x_{2}-\alpha _{2,1}\gamma _{1,2}x_{2}x_{1}^{2}-\alpha
_{2,1}\gamma _{1,2}x_{2}^{2}x_{1}-\alpha _{1,2}\gamma _{1,2}x_{1}x_{2}^{2}.
\label{mul_4}
\end{equation}

Also by Corollary \ref{rez} from Proposition \ref{sshomom} we have $%
w_{\lambda ,4}\left( x\right) =w_{\lambda ,3}\left( x\right) +r_{\lambda
,4}\left( x\right) $, where $w_{\lambda ,3}\left( x\right) =\varphi \left(
\lambda \right) x+\gamma _{1,2}\left( \left( \varphi \left( \lambda \right)
\right) ^{2}-\varphi \left( \lambda \right) \right) x^{2}$, $r_{\lambda
,4}\left( x\right) =\psi _{1}\left( \lambda \right) x\left( x^{2}\right)
+\psi _{2}\left( \lambda \right) \left( x^{2}\right) x$. By Op2)%
\begin{equation}
w_{\lambda ,4}\left( w_{+,4}\left( x_{1},x_{2}\right) \right) =w_{+,4}\left(
w_{\lambda ,4}\left( x_{1}\right) ,w_{\lambda ,4}\left( x_{2}\right) \right) 
\label{scalar_distr_4_1}
\end{equation}%
must fulfills in $F^{\left( 4\right) }\left( x_{1},x_{2}\right) $. If we
compare the terms of degree $3$ of this equation, we achieve 
\begin{equation*}
\left( r_{\lambda ,4}\left( x_{1}+x_{2}\right) -r_{\lambda ,4}\left(
x_{1}\right) -r_{\lambda ,4}\left( x_{2}\right) \right) +
\end{equation*}%
\begin{equation*}
\gamma _{1,2}^{2}\left( \left( \varphi \left( \lambda \right) \right)
^{2}-\varphi \left( \lambda \right) \right) (x_{1}\left( x_{1}x_{2}\right)
+x_{1}\left( x_{2}x_{1}\right) +x_{2}\left( x_{1}x_{2}\right) +x_{2}\left(
x_{2}x_{1}\right) +
\end{equation*}%
\begin{equation}
+\left( x_{1}x_{2}\right) x_{1}+\left( x_{1}x_{2}\right) x_{2}+\left(
x_{2}x_{1}\right) x_{1}+\left( x_{2}x_{1}\right) x_{2})=
\label{scalar_distr_4_2}
\end{equation}%
\begin{equation*}
\left( \varphi \left( \lambda \right) ^{3}-\varphi \left( \lambda \right)
\right) r_{+,4}\left( x_{1},x_{2}\right) +
\end{equation*}%
\begin{equation*}
\gamma _{1,2}^{2}\varphi \left( \lambda \right) \left( \left( \varphi \left(
\lambda \right) \right) ^{2}-\varphi \left( \lambda \right) \right) \left(
x_{1}x_{2}^{2}+x_{1}^{2}x_{2}+x_{2}x_{1}^{2}+x_{2}^{2}x_{1}\right) .
\end{equation*}%
It is clear that in $r_{\lambda ,4}\left( x_{1}+x_{2}\right) -r_{\lambda
,4}\left( x_{1}\right) -r_{\lambda ,4}\left( x_{2}\right) $ there are no
terms in which there are only entries of $x_{1}$ or only entries of $x_{2}$.
If we compare coefficients of $x_{1}^{2}x_{2}$ in the equation (\ref%
{scalar_distr_4_2}) we achieve%
\begin{equation}
\psi _{2}\left( \lambda \right) =\gamma _{1,2}^{2}\varphi \left( \lambda
\right) \left( \left( \varphi \left( \lambda \right) \right) ^{2}-\varphi
\left( \lambda \right) \right) +\gamma _{\left( 1,1\right) 2}\left( \varphi
\left( \lambda \right) ^{3}-\varphi \left( \lambda \right) \right) .
\label{scalar_distr_4_3_1}
\end{equation}%
if we compare coefficients of $x_{1}x_{2}^{2}$ in the equation (\ref%
{scalar_distr_4_2}) we achieve%
\begin{equation}
\psi _{1}\left( \lambda \right) =\gamma _{1,2}^{2}\varphi \left( \lambda
\right) \left( \left( \varphi \left( \lambda \right) \right) ^{2}-\varphi
\left( \lambda \right) \right) +\gamma _{1\left( 2,2\right) }\left( \varphi
\left( \lambda \right) ^{3}-\varphi \left( \lambda \right) \right) .
\label{scalar_distr_4_3_2}
\end{equation}%
Other comparisons give us only repetitions of the equations (\ref%
{scalar_distr_4_3_1}) and (\ref{scalar_distr_4_3_2}). So%
\begin{equation*}
r_{\lambda ,4}\left( x\right) =\left( \gamma _{1,2}^{2}\left( \varphi \left(
\lambda \right) ^{3}-\varphi \left( \lambda \right) ^{2}\right) +\gamma
_{1\left( 2,2\right) }\left( \varphi \left( \lambda \right) ^{3}-\varphi
\left( \lambda \right) \right) \right) x\left( x^{2}\right) +
\end{equation*}%
\begin{equation}
\left( \gamma _{1,2}^{2}\left( \varphi \left( \lambda \right) ^{3}-\varphi
\left( \lambda \right) ^{2}\right) +\gamma _{\left( 1,1\right) 2}\left(
\varphi \left( \lambda \right) ^{3}-\varphi \left( \lambda \right) \right)
\right) \left( x^{2}\right) x.  \label{mul_scal_4}
\end{equation}

Now we will prove that if the system of words $W$ defined by formulas (\ref%
{real_list_nilp3}), (\ref{add_4}), (\ref{mul_scal_4}) and (\ref{mul_4}) then
this system of words fulfills conditions Op1) and Op2). First of all we must
check that if $H\in \Theta _{4}$ then $H_{W}^{\ast }\in \Theta _{4}$. We can
check it by direct calculation. For example, if $h\in H\in \Theta _{3}$, $%
\mu ,\lambda \in k$ then%
\begin{equation*}
\left( \mu \lambda \right) \ast h=\varphi \left( \mu \lambda \right)
h+\gamma _{1,2}\left( \varphi \left( \mu \lambda \right) ^{2}-\varphi \left(
\mu \lambda \right) \right) h^{2}+
\end{equation*}%
\begin{equation*}
\left( \gamma _{1,2}^{2}\left( \varphi \left( \mu \lambda \right)
^{3}-\varphi \left( \mu \lambda \right) ^{2}\right) +\gamma _{1\left(
2,2\right) }\left( \varphi \left( \mu \lambda \right) ^{3}-\varphi \left(
\mu \lambda \right) \right) \right) h\left( h^{2}\right) +
\end{equation*}%
\begin{equation*}
\left( \gamma _{1,2}^{2}\left( \varphi \left( \mu \lambda \right)
^{3}-\varphi \left( \mu \lambda \right) ^{2}\right) +\gamma _{\left(
1,1\right) 2}\left( \varphi \left( \mu \lambda \right) ^{3}-\varphi \left(
\mu \lambda \right) \right) \right) \left( h^{2}\right) h.
\end{equation*}%
\begin{equation*}
\mu \ast \left( \lambda \ast h\right) =
\end{equation*}%
\begin{equation*}
\mu \ast (\varphi \left( \lambda \right) h+\gamma _{1,2}\left( \varphi
\left( \lambda \right) ^{2}-\varphi \left( \lambda \right) \right) h^{2}+
\end{equation*}%
\begin{equation*}
\left( \gamma _{1,2}^{2}\left( \varphi \left( \lambda \right) ^{3}-\varphi
\left( \lambda \right) ^{2}\right) +\gamma _{1\left( 2,2\right) }\left(
\varphi \left( \lambda \right) ^{3}-\varphi \left( \lambda \right) \right)
\right) h\left( h^{2}\right) +
\end{equation*}%
\begin{equation*}
\left( \gamma _{1,2}^{2}\left( \varphi \left( \lambda \right) ^{3}-\varphi
\left( \lambda \right) ^{2}\right) +\gamma _{\left( 1,1\right) 2}\left(
\varphi \left( \lambda \right) ^{3}-\varphi \left( \lambda \right) \right)
\right) \left( h^{2}\right) h=
\end{equation*}%
\begin{equation*}
\varphi \left( \mu \right) \varphi \left( \lambda \right) h+\gamma
_{1,2}\varphi \left( \mu \right) \left( \varphi \left( \lambda \right)
^{2}-\varphi \left( \lambda \right) \right) h^{2}+
\end{equation*}%
\begin{equation*}
\left( \gamma _{1,2}^{2}\varphi \left( \mu \right) \left( \varphi \left(
\lambda \right) ^{3}-\varphi \left( \lambda \right) ^{2}\right) +\gamma
_{1\left( 2,2\right) }\varphi \left( \mu \right) \left( \varphi \left(
\lambda \right) ^{3}-\varphi \left( \lambda \right) \right) \right) h\left(
h^{2}\right) +
\end{equation*}%
\begin{equation*}
\left( \gamma _{1,2}^{2}\varphi \left( \mu \right) \left( \varphi \left(
\lambda \right) ^{3}-\varphi \left( \lambda \right) ^{2}\right) +\gamma
_{\left( 1,1\right) 2}\varphi \left( \mu \right) \left( \varphi \left(
\lambda \right) ^{3}-\varphi \left( \lambda \right) \right) \right) \left(
h^{2}\right) h+
\end{equation*}%
\begin{equation*}
\gamma _{1,2}\left( \varphi \left( \mu \right) ^{2}-\varphi \left( \mu
\right) \right) (\varphi \left( \lambda \right) ^{2}h^{2}+
\end{equation*}%
\begin{equation*}
\gamma _{1,2}\left( \varphi \left( \lambda \right) ^{3}-\varphi \left(
\lambda \right) ^{2}\right) h\left( h^{2}\right) +\gamma _{1,2}\left(
\varphi \left( \lambda \right) ^{3}-\varphi \left( \lambda \right)
^{2}\right) \left( h^{2}\right) h)+
\end{equation*}%
\begin{equation*}
\left( \gamma _{1,2}^{2}\left( \varphi \left( \mu \right) ^{3}-\varphi
\left( \mu \right) ^{2}\right) +\gamma _{1\left( 2,2\right) }\left( \varphi
\left( \mu \right) ^{3}-\varphi \left( \mu \right) \right) \right) \varphi
\left( \lambda \right) ^{3}h\left( h^{2}\right) +
\end{equation*}%
\begin{equation*}
\left( \gamma _{1,2}^{2}\left( \varphi \left( \mu \right) ^{3}-\varphi
\left( \mu \right) ^{2}\right) +\gamma _{\left( 1,1\right) 2}\left( \varphi
\left( \mu \right) ^{3}-\varphi \left( \mu \right) \right) \right) \varphi
\left( \lambda \right) ^{3}\left( h^{2}\right) h=
\end{equation*}%
\begin{equation*}
\varphi \left( \mu \lambda \right) h+\gamma _{1,2}\left( \varphi \left( \mu
\right) \left( \varphi \left( \lambda \right) ^{2}-\varphi \left( \lambda
\right) \right) +\left( \varphi \left( \mu \right) ^{2}-\varphi \left( \mu
\right) \right) \varphi \left( \lambda \right) ^{2}\right) h^{2}+
\end{equation*}%
\begin{equation*}
(\gamma _{1,2}^{2}(\varphi \left( \mu \right) \left( \varphi \left( \lambda
\right) ^{3}-\varphi \left( \lambda \right) ^{2}\right) +\left( \varphi
\left( \mu \right) ^{2}-\varphi \left( \mu \right) \right) \left( \varphi
\left( \lambda \right) ^{3}-\varphi \left( \lambda \right) ^{2}\right) +
\end{equation*}%
\begin{equation*}
\left( \varphi \left( \mu \right) ^{3}-\varphi \left( \mu \right)
^{2}\right) \varphi \left( \lambda \right) ^{3})+
\end{equation*}%
\begin{equation*}
\gamma _{1\left( 2,2\right) }\left( \varphi \left( \mu \right) \left(
\varphi \left( \lambda \right) ^{3}-\varphi \left( \lambda \right) \right)
+\left( \varphi \left( \mu \right) ^{3}-\varphi \left( \mu \right) \right)
\varphi \left( \lambda \right) ^{3}\right) )h\left( h^{2}\right) +
\end{equation*}%
\begin{equation*}
(\gamma _{1,2}^{2}(\varphi \left( \mu \right) \left( \varphi \left( \lambda
\right) ^{3}-\varphi \left( \lambda \right) ^{2}\right) +\left( \varphi
\left( \mu \right) ^{2}-\varphi \left( \mu \right) \right) \left( \varphi
\left( \lambda \right) ^{3}-\varphi \left( \lambda \right) ^{2}\right) +
\end{equation*}%
\begin{equation*}
\left( \varphi \left( \mu \right) ^{3}-\varphi \left( \mu \right)
^{2}\right) \varphi \left( \lambda \right) ^{3})+
\end{equation*}%
\begin{equation*}
\gamma _{\left( 1,1\right) 2}\left( \varphi \left( \mu \right) \left(
\varphi \left( \lambda \right) ^{3}-\varphi \left( \lambda \right) \right)
+\left( \varphi \left( \mu \right) ^{3}-\varphi \left( \mu \right) \right)
\varphi \left( \lambda \right) ^{3}\right) )\left( h^{2}\right) h=
\end{equation*}%
\begin{equation*}
\varphi \left( \mu \lambda \right) h+\gamma _{1,2}\left( \varphi \left( \mu
\lambda \right) ^{2}-\varphi \left( \mu \lambda \right) \right) h^{2}+
\end{equation*}%
\begin{equation*}
\left( \gamma _{1,2}^{2}\left( \varphi \left( \mu \lambda \right)
^{3}-\varphi \left( \mu \lambda \right) ^{2}\right) +\gamma _{1\left(
2,2\right) }\left( \varphi \left( \mu \lambda \right) ^{3}-\varphi \left(
\mu \lambda \right) \right) \right) h\left( h^{2}\right) +
\end{equation*}%
\begin{equation*}
\left( \gamma _{1,2}^{2}\left( \varphi \left( \mu \lambda \right)
^{3}-\varphi \left( \mu \lambda \right) ^{2}\right) +\gamma _{\left(
1,1\right) 2}\left( \varphi \left( \mu \lambda \right) ^{3}-\varphi \left(
\mu \lambda \right) \right) \right) \left( h^{2}\right) h.
\end{equation*}%
Also by direct calculation we can check that for every $h_{1},h_{2}\in H\in
\Theta _{3}$, and every $\lambda \in k$ the%
\begin{equation*}
\lambda \ast \left( h_{1}\times h_{2}\right) =\left( \lambda \ast
h_{1}\right) \times h_{2}=h_{1}\times \left( \lambda \ast h_{2}\right) 
\end{equation*}%
holds. Other axioms of the variety $\Theta _{4}$ fulfill in $H_{W}^{\ast }$
by the constructions of the words of the system $W$.

So we can conclude that for every $F=F\left( X\right) \in \mathrm{Ob}\Theta
_{4}^{0}$ exists a homomorphism $\sigma _{F}:F\rightarrow F_{W}^{\ast }$
such that $\sigma _{F}\mid _{X}=id_{X}$. Our goal is to prove that these
homomorphisms are isomorphisms. For this purpose we will research the
superpositions of these homomorphisms. We will not lead out the general
formula of the superposition, similar to the formula (\ref{homom_mult_3}).
Hear we will consider only special cases of superpositions. If the system of
words $W$ defined by formulas (\ref{real_list_nilp3}), (\ref{add_4}), (\ref%
{mul_scal_4}) and (\ref{mul_4}) then its words and homomorphisms $\sigma
_{F}:F\rightarrow F_{W}^{\ast }$ depend on parameters $\varphi \in \mathrm{%
Aut}k$ and $\gamma _{1,2},\gamma _{1\left( 2,2\right) },\gamma _{\left(
1,1\right) 2},\alpha _{1,2},\alpha _{2,1}\in k$ such that $\alpha _{1,2}\neq
\pm \alpha _{2,1}$. So we denote 
\begin{equation*}
\sigma _{F}=\sigma _{F}\left( \varphi ,\gamma _{1,2},\gamma _{1\left(
2,2\right) },\gamma _{\left( 1,1\right) 2},\alpha _{1,2},\alpha
_{2,1}\right) 
\end{equation*}%
or 
\begin{equation*}
\sigma _{F}=\sigma \left( \varphi ,\gamma _{1,2},\gamma _{1\left( 2,2\right)
},\gamma _{\left( 1,1\right) 2},\alpha _{1,2},\alpha _{2,1}\right) .
\end{equation*}%
Sometimes we will give the indexes to these parameters. We have that%
\begin{equation*}
\sigma \left( id_{k},\gamma _{1,2},\gamma _{1\left( 2,2\right) },\gamma
_{\left( 1,1\right) 2},1,0\right) \sigma \left( \varphi ,0,0,0,\alpha
_{1,2},\alpha _{2,1}\right) \left( x_{1}+x_{2}\right) =
\end{equation*}%
\begin{equation*}
\sigma \left( id_{k},\gamma _{1,2},\gamma _{1\left( 2,2\right) },\gamma
_{\left( 1,1\right) 2},1,0\right) \left( x_{1}+x_{2}\right) =
\end{equation*}%
\begin{equation*}
x_{1}+x_{2}+\gamma _{1,2}x_{1}x_{2}+\gamma _{1,2}x_{2}x_{1}+
\end{equation*}%
\begin{equation*}
\left( \gamma _{1,2}^{2}+\gamma _{1\left( 2,2\right) }\right) x_{1}\left(
x_{1}x_{2}\right) +\left( \gamma _{1,2}^{2}+\gamma _{1\left( 2,2\right)
}\right) x_{1}\left( x_{2}x_{1}\right) +
\end{equation*}%
\begin{equation*}
\gamma _{1\left( 2,2\right) }x_{1}\left( x_{2}^{2}\right) +\gamma _{1\left(
2,2\right) }x_{2}\left( x_{1}^{2}\right) +
\end{equation*}%
\begin{equation*}
\left( \gamma _{1,2}^{2}+\gamma _{1\left( 2,2\right) }\right) x_{2}\left(
x_{1}x_{2}\right) +\left( \gamma _{1,2}^{2}+\gamma _{1\left( 2,2\right)
}\right) x_{2}\left( x_{2}x_{1}\right) +
\end{equation*}%
\begin{equation*}
\left( \gamma _{1,2}^{2}+\gamma _{\left( 1,1\right) 2}\right) \left(
x_{1}x_{2}\right) x_{1}+\left( \gamma _{1,2}^{2}+\gamma _{\left( 1,1\right)
2}\right) \left( x_{2}x_{1}\right) x_{1}+
\end{equation*}%
\begin{equation*}
\gamma _{\left( 1,1\right) 2}\left( x_{2}^{2}\right) x_{1}+\gamma _{\left(
1,1\right) 2}\left( x_{1}^{2}\right) x_{2}+
\end{equation*}%
\begin{equation*}
\left( \gamma _{1,2}^{2}+\gamma _{\left( 1,1\right) 2}\right) \left(
x_{1}x_{2}\right) x_{2}+\left( \gamma _{1,2}^{2}+\gamma _{\left( 1,1\right)
2}\right) \left( x_{2}x_{1}\right) x_{2}.
\end{equation*}%
\begin{equation*}
\sigma \left( id_{k},\gamma _{1,2},\gamma _{1\left( 2,2\right) },\gamma
_{\left( 1,1\right) 2},1,0\right) \sigma \left( \varphi ,0,0,0,\alpha
_{1,2},\alpha _{2,1}\right) \left( x_{1}x_{2}\right) =
\end{equation*}%
\begin{equation*}
\sigma \left( id_{k},\gamma _{1,2},\gamma _{1\left( 2,2\right) },\gamma
_{\left( 1,1\right) 2},1,0\right) \left( \alpha _{1,2}x_{1}x_{2}+\alpha
_{2,1}x_{2}x_{1}\right) =
\end{equation*}%
\begin{equation*}
\alpha _{1,2}\ast \left( x_{1}\times x_{2}\right) \perp \alpha _{2,1}\ast
\left( x_{2}\times x_{1}\right) ,
\end{equation*}%
where operations $\perp $, $\times $ and $\ast $ defined by the system of
words which depend on parameters $id_{k}$, $\gamma _{1,2}$, $\gamma
_{1\left( 2,2\right) }$, $\gamma _{\left( 1,1\right) 2}$, $1$, $0$. So%
\begin{equation*}
\alpha _{1,2}\ast \left( x_{1}\times x_{2}\right) =\alpha
_{1,2}x_{1}x_{2}-\alpha _{1,2}\gamma _{1,2}x_{1}\left( x_{2}^{2}\right)
-\alpha _{1,2}\gamma _{1,2}\left( x_{1}^{2}\right) x_{2}
\end{equation*}%
and%
\begin{equation*}
\sigma \left( id_{k},\gamma _{1,2},\gamma _{1\left( 2,2\right) },\gamma
_{\left( 1,1\right) 2},1,0\right) \sigma \left( \varphi ,0,0,0,\alpha
_{1,2},\alpha _{2,1}\right) \left( x_{1}x_{2}\right) =
\end{equation*}%
\begin{equation*}
\alpha _{1,2}x_{1}x_{2}-\alpha _{1,2}\gamma _{1,2}x_{1}\left(
x_{2}^{2}\right) -\alpha _{1,2}\gamma _{1,2}\left( x_{1}^{2}\right) x_{2}+
\end{equation*}%
\begin{equation*}
\alpha _{2,1}x_{2}x_{1}-\alpha _{2,1}\gamma _{1,2}x_{2}\left(
x_{1}^{2}\right) -\alpha _{2,1}\gamma _{1,2}\left( x_{2}^{2}\right) x_{1}.
\end{equation*}%
\begin{equation*}
\sigma \left( id_{k},\gamma _{1,2},\gamma _{1\left( 2,2\right) },\gamma
_{\left( 1,1\right) 2},1,0\right) \sigma \left( \varphi ,0,0,0,\alpha
_{1,2},\alpha _{2,1}\right) \left( \lambda x\right) =
\end{equation*}%
\begin{equation*}
\sigma \left( id_{k},\gamma _{1,2},\gamma _{1\left( 2,2\right) },\gamma
_{\left( 1,1\right) 2},1,0\right) \left( \varphi \left( \lambda \right)
x\right) =
\end{equation*}%
\begin{equation*}
\varphi \left( \lambda \right) x+\gamma _{1,2}\left( \left( \varphi \left(
\lambda \right) \right) ^{2}-\varphi \left( \lambda \right) \right) x^{2}+
\end{equation*}%
\begin{equation*}
\left( \gamma _{1,2}^{2}\left( \varphi \left( \lambda \right) ^{3}-\varphi
\left( \lambda \right) ^{2}\right) +\gamma _{1\left( 2,2\right) }\left(
\varphi \left( \lambda \right) ^{3}-\varphi \left( \lambda \right) \right)
\right) x\left( x^{2}\right) +
\end{equation*}%
\begin{equation*}
\left( \gamma _{1,2}^{2}\left( \varphi \left( \lambda \right) ^{3}-\varphi
\left( \lambda \right) ^{2}\right) +\gamma _{\left( 1,1\right) 2}\left(
\varphi \left( \lambda \right) ^{3}-\varphi \left( \lambda \right) \right)
\right) \left( x^{2}\right) x.
\end{equation*}%
Therefore we have the decomposition%
\begin{equation*}
\sigma \left( id_{k},\gamma _{1,2},\gamma _{1\left( 2,2\right) },\gamma
_{\left( 1,1\right) 2},1,0\right) \sigma \left( \varphi ,0,0,0,\alpha
_{1,2},\alpha _{2,1}\right) =
\end{equation*}%
\begin{equation}
\sigma \left( \varphi ,\gamma _{1,2},\gamma _{1\left( 2,2\right) },\gamma
_{\left( 1,1\right) 2},\alpha _{1,2},\alpha _{2,1}\right) .
\label{homom_decomp_1_4}
\end{equation}%
We can prove as in the Subsection \ref{nilp3} that the homorphisms of the
kind $\sigma \left( \varphi ,0,0,0,\alpha _{1,2},\alpha _{2,1}\right) $ are
invertible. Now we will research the homorphisms of the kind $\sigma \left(
id_{k},\gamma _{1,2},\gamma _{1\left( 2,2\right) },\gamma _{\left(
1,1\right) 2},1,0\right) $. We will consider two system of words $W^{\left(
1\right) }$ and $W^{\left( 2\right) }$ and two system of homomorphisms $%
\left\{ \sigma _{F}^{\left( 1\right) }:F\rightarrow F_{W^{\left( 1\right)
}}^{\ast }\mid F\in \mathrm{Ob}\Theta _{4}^{0}\right\} $ and $\left\{ \sigma
_{F}^{\left( 2\right) }:F\rightarrow F_{W^{\left( 2\right) }}^{\ast }\mid
F\in \mathrm{Ob}\Theta _{4}^{0}\right\} $ such that $\sigma _{F}^{\left(
i\right) }=\sigma \left( id_{k},\gamma _{1,2}^{\left( i\right) },\gamma
_{1\left( 2,2\right) }^{\left( i\right) },\gamma _{\left( 1,1\right)
2}^{\left( i\right) },1,0\right) $, $i=1,2$. We have that%
\begin{equation*}
\sigma \left( id_{k},\gamma _{1,2}^{\left( 1\right) },\gamma _{1\left(
2,2\right) }^{\left( 1\right) },\gamma _{\left( 1,1\right) 2}^{\left(
1\right) },1,0\right) \sigma \left( id_{k},\gamma _{1,2}^{\left( 2\right)
},\gamma _{1\left( 2,2\right) }^{\left( 2\right) },\gamma _{\left(
1,1\right) 2}^{\left( 2\right) },1,0\right) \left( x_{1}+x_{2}\right) =
\end{equation*}%
\begin{equation*}
\left( id_{k},\gamma _{1,2}^{\left( 2\right) },\gamma _{1\left( 2,2\right)
}^{\left( 2\right) },\gamma _{\left( 1,1\right) 2}^{\left( 2\right)
},1,0\right) (x_{1}+x_{2}+\gamma _{1,2}^{\left( 1\right) }x_{1}x_{2}+\gamma
_{1,2}^{\left( 1\right) }x_{2}x_{1}+
\end{equation*}%
\begin{equation*}
\left( \left( \gamma _{1,2}^{\left( 1\right) }\right) ^{2}+\gamma _{1\left(
2,2\right) }^{\left( 1\right) }\right) x_{1}\left( x_{1}x_{2}\right) +\left(
\left( \gamma _{1,2}^{\left( 1\right) }\right) ^{2}+\gamma _{1\left(
2,2\right) }^{\left( 1\right) }\right) x_{1}\left( x_{2}x_{1}\right) +
\end{equation*}%
\begin{equation*}
\gamma _{1\left( 2,2\right) }^{\left( 1\right) }x_{1}\left( x_{2}^{2}\right)
+\gamma _{1\left( 2,2\right) }^{\left( 1\right) }x_{2}\left(
x_{1}^{2}\right) +
\end{equation*}%
\begin{equation*}
\left( \left( \gamma _{1,2}^{\left( 1\right) }\right) ^{2}+\gamma _{1\left(
2,2\right) }^{\left( 1\right) }\right) x_{2}\left( x_{1}x_{2}\right) +\left(
\left( \gamma _{1,2}^{\left( 1\right) }\right) ^{2}+\gamma _{1\left(
2,2\right) }^{\left( 1\right) }\right) x_{2}\left( x_{2}x_{1}\right) +
\end{equation*}%
\begin{equation*}
\left( \left( \gamma _{1,2}^{\left( 1\right) }\right) ^{2}+\gamma _{\left(
1,1\right) 2}^{\left( 1\right) }\right) \left( x_{1}x_{2}\right)
x_{1}+\left( \left( \gamma _{1,2}^{\left( 1\right) }\right) ^{2}+\gamma
_{\left( 1,1\right) 2}^{\left( 1\right) }\right) \left( x_{2}x_{1}\right)
x_{1}+
\end{equation*}%
\begin{equation*}
\gamma _{\left( 1,1\right) 2}^{\left( 1\right) }\left( x_{2}^{2}\right)
x_{1}+\gamma _{\left( 1,1\right) 2}^{\left( 1\right) }\left(
x_{1}^{2}\right) x_{2}+
\end{equation*}%
\begin{equation*}
\left( \left( \gamma _{1,2}^{\left( 1\right) }\right) ^{2}+\gamma _{\left(
1,1\right) 2}^{\left( 1\right) }\right) \left( x_{1}x_{2}\right)
x_{2}+\left( \left( \gamma _{1,2}^{\left( 1\right) }\right) ^{2}+\gamma
_{\left( 1,1\right) 2}^{\left( 1\right) }\right) \left( x_{2}x_{1}\right)
x_{2}).
\end{equation*}%
\begin{equation*}
\sigma \left( id_{k},\gamma _{1,2}^{\left( 1\right) },\gamma _{1\left(
2,2\right) }^{\left( 1\right) },\gamma _{\left( 1,1\right) 2}^{\left(
1\right) },1,0\right) \left( \gamma _{1,2}^{\left( 1\right)
}x_{1}x_{2}\right) =
\end{equation*}%
\begin{equation*}
\gamma _{1,2}^{\left( 1\right) }\left( x_{1}x_{2}-\gamma _{1,2}^{\left(
2\right) }x_{1}\left( x_{2}^{2}\right) -\gamma _{1,2}^{\left( 2\right)
}\left( x_{1}^{2}\right) x_{2}\right) .
\end{equation*}%
\begin{equation*}
\sigma \left( id_{k},\gamma _{1,2}^{\left( 1\right) },\gamma _{1\left(
2,2\right) }^{\left( 1\right) },\gamma _{\left( 1,1\right) 2}^{\left(
1\right) },1,0\right) \left( \left( \left( \gamma _{1,2}^{\left( 1\right)
}\right) ^{2}+\gamma _{1\left( 2,2\right) }^{\left( 1\right) }\right)
x_{1}\left( x_{1}x_{2}\right) \right) =
\end{equation*}%
\begin{equation*}
\left( \left( \gamma _{1,2}^{\left( 1\right) }\right) ^{2}+\gamma _{1\left(
2,2\right) }^{\left( 1\right) }\right) x_{1}\left( x_{1}x_{2}\right) .
\end{equation*}%
So%
\begin{equation*}
\sigma \left( id_{k},\gamma _{1,2}^{\left( 1\right) },\gamma _{1\left(
2,2\right) }^{\left( 1\right) },\gamma _{\left( 1,1\right) 2}^{\left(
1\right) },1,0\right) \sigma \left( id_{k},\gamma _{1,2}^{\left( 2\right)
},\gamma _{1\left( 2,2\right) }^{\left( 2\right) },\gamma _{\left(
1,1\right) 2}^{\left( 2\right) },1,0\right) \left( x_{1}+x_{2}\right) =
\end{equation*}%
\begin{equation*}
(x_{1}+x_{2}+\gamma _{1,2}^{\left( 2\right) }x_{1}x_{2}+\gamma
_{1,2}^{\left( 2\right) }x_{2}x_{1}+
\end{equation*}%
\begin{equation*}
\left( \left( \gamma _{1,2}^{\left( 2\right) }\right) +\gamma _{1\left(
2,2\right) }^{\left( 2\right) }\right) x_{1}\left( x_{1}x_{2}\right) +\left(
\left( \gamma _{1,2}^{\left( 2\right) }\right) ^{2}+\gamma _{1\left(
2,2\right) }^{\left( 2\right) }\right) x_{1}\left( x_{2}x_{1}\right) +
\end{equation*}%
\begin{equation*}
\gamma _{1\left( 2,2\right) }^{\left( 2\right) }x_{1}\left( x_{2}^{2}\right)
+\gamma _{1\left( 2,2\right) }^{\left( 2\right) }x_{2}\left(
x_{1}^{2}\right) +
\end{equation*}%
\begin{equation*}
\left( \left( \gamma _{1,2}^{\left( 2\right) }\right) ^{2}+\gamma _{1\left(
2,2\right) }^{\left( 2\right) }\right) x_{2}\left( x_{1}x_{2}\right) +\left(
\left( \gamma _{1,2}^{\left( 2\right) }\right) ^{2}+\gamma _{1\left(
2,2\right) }^{\left( 2\right) }\right) x_{2}\left( x_{2}x_{1}\right) +
\end{equation*}%
\begin{equation*}
\left( \left( \gamma _{1,2}^{\left( 2\right) }\right) ^{2}+\gamma _{\left(
1,1\right) 2}^{\left( 2\right) }\right) \left( x_{1}x_{2}\right)
x_{1}+\left( \left( \gamma _{1,2}^{\left( 2\right) }\right) ^{2}+\gamma
_{\left( 1,1\right) 2}^{\left( 2\right) }\right) \left( x_{2}x_{1}\right)
x_{1}+
\end{equation*}%
\begin{equation*}
\gamma _{\left( 1,1\right) 2}^{\left( 2\right) }\left( x_{2}^{2}\right)
x_{1}+\gamma _{\left( 1,1\right) 2}^{\left( 2\right) }\left(
x_{1}^{2}\right) x_{2}+
\end{equation*}%
\begin{equation*}
\left( \left( \gamma _{1,2}^{\left( 2\right) }\right) ^{2}+\gamma _{\left(
1,1\right) 2}^{\left( 2\right) }\right) \left( x_{1}x_{2}\right)
x_{2}+\left( \left( \gamma _{1,2}^{\left( 2\right) }\right) ^{2}+\gamma
_{\left( 1,1\right) 2}^{\left( 2\right) }\right) \left( x_{2}x_{1}\right)
x_{2})\underset{\left( 2\right) }{\perp }
\end{equation*}%
\begin{equation*}
\left( \gamma _{1,2}^{\left( 1\right) }\left( x_{1}x_{2}-\gamma
_{1,2}^{\left( 2\right) }x_{1}\left( x_{2}^{2}\right) -\gamma _{1,2}^{\left(
2\right) }\left( x_{1}^{2}\right) x_{2}\right) +\gamma _{1,2}^{\left(
1\right) }\left( x_{2}x_{1}-\gamma _{1,2}^{\left( 2\right) }x_{2}\left(
x_{1}^{2}\right) -\gamma _{1,2}^{\left( 2\right) }\left( x_{2}^{2}\right)
x_{1}\right) \right) +
\end{equation*}%
\begin{equation*}
\left( \left( \gamma _{1,2}^{\left( 1\right) }\right) ^{2}+\gamma _{1\left(
2,2\right) }^{\left( 1\right) }\right) x_{1}\left( x_{1}x_{2}\right) +\left(
\left( \gamma _{1,2}^{\left( 1\right) }\right) ^{2}+\gamma _{1\left(
2,2\right) }^{\left( 1\right) }\right) x_{1}\left( x_{2}x_{1}\right) +
\end{equation*}%
\begin{equation*}
\gamma _{1\left( 2,2\right) }^{\left( 1\right) }x_{1}x_{2}^{2}+\gamma
_{1\left( 2,2\right) }^{\left( 1\right) }x_{2}x_{1}^{2}+
\end{equation*}%
\begin{equation*}
\left( \left( \gamma _{1,2}^{\left( 1\right) }\right) ^{2}+\gamma _{1\left(
2,2\right) }^{\left( 1\right) }\right) x_{2}\left( x_{1}x_{2}\right) +\left(
\left( \gamma _{1,2}^{\left( 1\right) }\right) ^{2}+\gamma _{1\left(
2,2\right) }^{\left( 1\right) }\right) x_{2}\left( x_{2}x_{1}\right) +
\end{equation*}%
\begin{equation*}
\left( \left( \gamma _{1,2}^{\left( 1\right) }\right) ^{2}+\gamma _{\left(
1,1\right) 2}^{\left( 1\right) }\right) \left( x_{1}x_{2}\right)
x_{1}+\left( \left( \gamma _{1,2}^{\left( 1\right) }\right) ^{2}+\gamma
_{\left( 1,1\right) 2}^{\left( 1\right) }\right) \left( x_{2}x_{1}\right)
x_{1}+
\end{equation*}%
\begin{equation*}
\gamma _{\left( 1,1\right) 2}^{\left( 1\right) }x_{2}^{2}x_{1}+\gamma
_{\left( 1,1\right) 2}^{\left( 1\right) }x_{1}^{2}x_{2}+
\end{equation*}%
\begin{equation*}
\left( \left( \gamma _{1,2}^{\left( 1\right) }\right) ^{2}+\gamma _{\left(
1,1\right) 2}^{\left( 1\right) }\right) \left( x_{1}x_{2}\right)
x_{2}+\left( \left( \gamma _{1,2}^{\left( 1\right) }\right) ^{2}+\gamma
_{\left( 1,1\right) 2}^{\left( 1\right) }\right) \left( x_{2}x_{1}\right)
x_{2}=
\end{equation*}%
\begin{equation*}
x_{1}+x_{2}+\gamma _{1,2}^{\left( 2\right) }x_{1}x_{2}+\gamma _{1,2}^{\left(
2\right) }x_{2}x_{1}+
\end{equation*}%
\begin{equation*}
\left( \left( \gamma _{1,2}^{\left( 2\right) }\right) +\gamma _{1\left(
2,2\right) }^{\left( 2\right) }\right) x_{1}\left( x_{1}x_{2}\right) +\left(
\left( \gamma _{1,2}^{\left( 2\right) }\right) ^{2}+\gamma _{1\left(
2,2\right) }^{\left( 2\right) }\right) x_{1}\left( x_{2}x_{1}\right) +
\end{equation*}%
\begin{equation*}
\gamma _{1\left( 2,2\right) }^{\left( 2\right) }x_{1}\left( x_{2}^{2}\right)
+\gamma _{1\left( 2,2\right) }^{\left( 2\right) }x_{2}\left(
x_{1}^{2}\right) +
\end{equation*}%
\begin{equation*}
\left( \left( \gamma _{1,2}^{\left( 2\right) }\right) ^{2}+\gamma _{1\left(
2,2\right) }^{\left( 2\right) }\right) x_{2}\left( x_{1}x_{2}\right) +\left(
\left( \gamma _{1,2}^{\left( 2\right) }\right) ^{2}+\gamma _{1\left(
2,2\right) }^{\left( 2\right) }\right) x_{2}\left( x_{2}x_{1}\right) +
\end{equation*}%
\begin{equation*}
\left( \left( \gamma _{1,2}^{\left( 2\right) }\right) ^{2}+\gamma _{\left(
1,1\right) 2}^{\left( 2\right) }\right) \left( x_{1}x_{2}\right)
x_{1}+\left( \left( \gamma _{1,2}^{\left( 2\right) }\right) ^{2}+\gamma
_{\left( 1,1\right) 2}^{\left( 2\right) }\right) \left( x_{2}x_{1}\right)
x_{1}+
\end{equation*}%
\begin{equation*}
\gamma _{\left( 1,1\right) 2}^{\left( 2\right) }\left( x_{2}^{2}\right)
x_{1}+\gamma _{\left( 1,1\right) 2}^{\left( 2\right) }\left(
x_{1}^{2}\right) x_{2}+
\end{equation*}%
\begin{equation*}
\left( \left( \gamma _{1,2}^{\left( 2\right) }\right) ^{2}+\gamma _{\left(
1,1\right) 2}^{\left( 2\right) }\right) \left( x_{1}x_{2}\right)
x_{2}+\left( \left( \gamma _{1,2}^{\left( 2\right) }\right) ^{2}+\gamma
_{\left( 1,1\right) 2}^{\left( 2\right) }\right) \left( x_{2}x_{1}\right)
x_{2}+
\end{equation*}%
\begin{equation*}
\gamma _{1,2}^{\left( 1\right) }x_{1}x_{2}-\gamma _{1,2}^{\left( 1\right)
}\gamma _{1,2}^{\left( 2\right) }x_{1}\left( x_{2}^{2}\right) -\gamma
_{1,2}^{\left( 1\right) }\gamma _{1,2}^{\left( 2\right) }\left(
x_{1}^{2}\right) x_{2}+
\end{equation*}%
\begin{equation*}
\gamma _{1,2}^{\left( 1\right) }x_{2}x_{1}-\gamma _{1,2}^{\left( 1\right)
}\gamma _{1,2}^{\left( 2\right) }x_{2}\left( x_{1}^{2}\right) -\gamma
_{1,2}^{\left( 1\right) }\gamma _{1,2}^{\left( 2\right) }\left(
x_{2}^{2}\right) x_{1}+
\end{equation*}%
\begin{equation*}
\gamma _{1,2}^{\left( 2\right) }\gamma _{1,2}^{\left( 1\right) }\left(
x_{1}+x_{2}\right) \left( x_{1}x_{2}+x_{2}x_{1}\right) +\gamma
_{1,2}^{\left( 2\right) }\gamma _{1,2}^{\left( 1\right) }\left(
x_{1}x_{2}+x_{2}x_{1}\right) \left( x_{1}+x_{2}\right) +
\end{equation*}%
\begin{equation*}
\left( \left( \gamma _{1,2}^{\left( 1\right) }\right) ^{2}+\gamma _{1\left(
2,2\right) }^{\left( 1\right) }\right) x_{1}\left( x_{1}x_{2}\right) +\left(
\left( \gamma _{1,2}^{\left( 1\right) }\right) ^{2}+\gamma _{1\left(
2,2\right) }^{\left( 1\right) }\right) x_{1}\left( x_{2}x_{1}\right) +
\end{equation*}%
\begin{equation*}
\gamma _{1\left( 2,2\right) }^{\left( 1\right) }x_{1}x_{2}^{2}+\gamma
_{1\left( 2,2\right) }^{\left( 1\right) }x_{2}x_{1}^{2}+
\end{equation*}%
\begin{equation*}
\left( \left( \gamma _{1,2}^{\left( 1\right) }\right) ^{2}+\gamma _{1\left(
2,2\right) }^{\left( 1\right) }\right) x_{2}\left( x_{1}x_{2}\right) +\left(
\left( \gamma _{1,2}^{\left( 1\right) }\right) ^{2}+\gamma _{1\left(
2,2\right) }^{\left( 1\right) }\right) x_{2}\left( x_{2}x_{1}\right) +
\end{equation*}%
\begin{equation*}
\left( \left( \gamma _{1,2}^{\left( 1\right) }\right) ^{2}+\gamma _{\left(
1,1\right) 2}^{\left( 1\right) }\right) \left( x_{1}x_{2}\right)
x_{1}+\left( \left( \gamma _{1,2}^{\left( 1\right) }\right) ^{2}+\gamma
_{\left( 1,1\right) 2}^{\left( 1\right) }\right) \left( x_{2}x_{1}\right)
x_{1}+
\end{equation*}%
\begin{equation*}
\gamma _{\left( 1,1\right) 2}^{\left( 1\right) }x_{2}^{2}x_{1}+\gamma
_{\left( 1,1\right) 2}^{\left( 1\right) }x_{1}^{2}x_{2}+
\end{equation*}%
\begin{equation*}
\left( \left( \gamma _{1,2}^{\left( 1\right) }\right) ^{2}+\gamma _{\left(
1,1\right) 2}^{\left( 1\right) }\right) \left( x_{1}x_{2}\right)
x_{2}+\left( \left( \gamma _{1,2}^{\left( 1\right) }\right) ^{2}+\gamma
_{\left( 1,1\right) 2}^{\left( 1\right) }\right) \left( x_{2}x_{1}\right)
x_{2}=
\end{equation*}%
\begin{equation*}
x_{1}+x_{2}+\left( \gamma _{1,2}^{\left( 2\right) }+\gamma _{1,2}^{\left(
1\right) }\right) x_{1}x_{2}+\left( \gamma _{1,2}^{\left( 2\right) }+\gamma
_{1,2}^{\left( 1\right) }\right) x_{2}x_{1}+
\end{equation*}%
\begin{equation*}
\left( \left( \gamma _{1,2}^{\left( 2\right) }+\gamma _{1,2}^{\left(
1\right) }\right) ^{2}+\gamma _{1\left( 2,2\right) }^{\left( 2\right)
}-\gamma _{1,2}^{\left( 2\right) }\gamma _{1,2}^{\left( 1\right) }+\gamma
_{1\left( 2,2\right) }^{\left( 1\right) }\right) x_{1}\left(
x_{1}x_{2}\right) +
\end{equation*}%
\begin{equation*}
\left( \left( \gamma _{1,2}^{\left( 2\right) }+\gamma _{1,2}^{\left(
1\right) }\right) ^{2}+\gamma _{1\left( 2,2\right) }^{\left( 2\right)
}-\gamma _{1,2}^{\left( 2\right) }\gamma _{1,2}^{\left( 1\right) }+\gamma
_{1\left( 2,2\right) }^{\left( 1\right) }\right) x_{1}\left(
x_{2}x_{1}\right) +
\end{equation*}%
\begin{equation*}
\left( \gamma _{1\left( 2,2\right) }^{\left( 2\right) }-\gamma
_{1,2}^{\left( 1\right) }\gamma _{1,2}^{\left( 2\right) }+\gamma _{1\left(
2,2\right) }^{\left( 1\right) }\right) x_{1}\left( x_{2}^{2}\right) +\left(
\gamma _{1\left( 2,2\right) }^{\left( 2\right) }-\gamma _{1,2}^{\left(
1\right) }\gamma _{1,2}^{\left( 2\right) }+\gamma _{1\left( 2,2\right)
}^{\left( 1\right) }\right) x_{2}\left( x_{1}^{2}\right) +
\end{equation*}%
\begin{equation*}
\left( \left( \gamma _{1,2}^{\left( 2\right) }+\gamma _{1,2}^{\left(
1\right) }\right) ^{2}+\gamma _{1\left( 2,2\right) }^{\left( 2\right)
}-\gamma _{1,2}^{\left( 2\right) }\gamma _{1,2}^{\left( 1\right) }+\gamma
_{1\left( 2,2\right) }^{\left( 1\right) }\right) x_{2}\left(
x_{1}x_{2}\right) +
\end{equation*}%
\begin{equation*}
\left( \left( \gamma _{1,2}^{\left( 2\right) }+\gamma _{1,2}^{\left(
1\right) }\right) ^{2}+\gamma _{1\left( 2,2\right) }^{\left( 2\right)
}-\gamma _{1,2}^{\left( 2\right) }\gamma _{1,2}^{\left( 1\right) }+\gamma
_{1\left( 2,2\right) }^{\left( 1\right) }\right) x_{2}\left(
x_{2}x_{1}\right) +
\end{equation*}%
\begin{equation*}
\left( \left( \gamma _{1,2}^{\left( 2\right) }+\gamma _{1,2}^{\left(
1\right) }\right) ^{2}+\gamma _{\left( 1,1\right) 2}^{\left( 2\right)
}-\gamma _{1,2}^{\left( 2\right) }\gamma _{1,2}^{\left( 1\right) }+\gamma
_{\left( 1,1\right) 2}^{\left( 1\right) }\right) \left( x_{1}x_{2}\right)
x_{1}+
\end{equation*}%
\begin{equation*}
\left( \left( \gamma _{1,2}^{\left( 2\right) }+\gamma _{1,2}^{\left(
1\right) }\right) ^{2}+\gamma _{\left( 1,1\right) 2}^{\left( 2\right)
}-\gamma _{1,2}^{\left( 2\right) }\gamma _{1,2}^{\left( 1\right) }+\gamma
_{\left( 1,1\right) 2}^{\left( 1\right) }\right) \left( x_{2}x_{1}\right)
x_{1}+
\end{equation*}%
\begin{equation*}
\left( \gamma _{\left( 1,1\right) 2}^{\left( 2\right) }-\gamma
_{1,2}^{\left( 1\right) }\gamma _{1,2}^{\left( 2\right) }+\gamma _{\left(
1,1\right) 2}^{\left( 1\right) }\right) \left( x_{2}^{2}\right) x_{1}+\left(
\gamma _{\left( 1,1\right) 2}^{\left( 2\right) }-\gamma _{1,2}^{\left(
1\right) }\gamma _{1,2}^{\left( 2\right) }+\gamma _{\left( 1,1\right)
2}^{\left( 1\right) }\right) \left( x_{1}^{2}\right) x_{2}+
\end{equation*}%
\begin{equation*}
\left( \left( \gamma _{1,2}^{\left( 2\right) }+\gamma _{1,2}^{\left(
1\right) }\right) ^{2}+\gamma _{\left( 1,1\right) 2}^{\left( 2\right)
}-\gamma _{1,2}^{\left( 2\right) }\gamma _{1,2}^{\left( 1\right) }+\gamma
_{\left( 1,1\right) 2}^{\left( 1\right) }\right) \left( x_{1}x_{2}\right)
x_{2}+
\end{equation*}%
\begin{equation*}
\left( \left( \gamma _{1,2}^{\left( 2\right) }+\gamma _{1,2}^{\left(
1\right) }\right) ^{2}+\gamma _{\left( 1,1\right) 2}^{\left( 2\right)
}-\gamma _{1,2}^{\left( 2\right) }\gamma _{1,2}^{\left( 1\right) }+\gamma
_{\left( 1,1\right) 2}^{\left( 1\right) }\right) \left( x_{2}x_{1}\right)
x_{2}.
\end{equation*}%
\begin{equation*}
\sigma \left( id_{k},\gamma _{1,2}^{\left( 2\right) },\gamma _{1\left(
2,2\right) }^{\left( 2\right) },\gamma _{\left( 1,1\right) 2}^{\left(
2\right) },1,0\right) \sigma \left( id_{k},\gamma _{1,2}^{\left( 1\right)
},\gamma _{1\left( 2,2\right) }^{\left( 1\right) },\gamma _{\left(
1,1\right) 2}^{\left( 1\right) },1,0\right) \left( x_{1}x_{2}\right) \equiv 
\end{equation*}%
\begin{equation*}
\sigma \left( id_{k},\gamma _{1,2}^{\left( 2\right) },\gamma _{1\left(
2,2\right) }^{\left( 2\right) },\gamma _{\left( 1,1\right) 2}^{\left(
2\right) },1,0\right) \left( x_{1}x_{2}\right) \equiv x_{1}x_{2}\left( \func{%
mod}F^{3}\right) ,
\end{equation*}%
where $F=F^{\left( 4\right) }\left( x_{1},x_{2}\right) $. Therefore%
\begin{equation*}
\sigma \left( id_{k},\gamma _{1,2}^{\left( 2\right) },\gamma _{1\left(
2,2\right) }^{\left( 2\right) },\gamma _{\left( 1,1\right) 2}^{\left(
2\right) },1,0\right) \sigma \left( id_{k},\gamma _{1,2}^{\left( 1\right)
},\gamma _{1\left( 2,2\right) }^{\left( 1\right) },\gamma _{\left(
1,1\right) 2}^{\left( 1\right) },1,0\right) =
\end{equation*}%
\begin{equation*}
\sigma \left( id_{k},\gamma _{1,2}^{\left( 1\right) }+\gamma _{1,2}^{\left(
2\right) },\gamma _{1\left( 2,2\right) }^{\left( 1\right) }-\gamma
_{1,2}^{\left( 1\right) }\gamma _{1,2}^{\left( 2\right) }+\gamma _{1\left(
2,2\right) }^{\left( 2\right) },\gamma _{\left( 1,1\right) 2}^{\left(
1\right) }-\gamma _{1,2}^{\left( 1\right) }\gamma _{1,2}^{\left( 2\right)
}+\gamma _{\left( 1,1\right) 2}^{\left( 2\right) },1,0\right) .
\end{equation*}%
From this formula we can conclude that%
\begin{equation*}
\sigma \left( id_{k},\gamma _{1,2},\gamma _{1\left( 2,2\right) },\gamma
_{\left( 1,1\right) 2},1,0\right) =
\end{equation*}%
\begin{equation*}
\sigma \left( id_{k},\gamma _{1,2},0,0,1,0\right) \sigma \left(
id_{k},0,\gamma _{1\left( 2,2\right) },0,1,0\right) \sigma \left(
id_{k},0,0,\gamma _{\left( 1,1\right) 2},1,0\right) ,
\end{equation*}%
\begin{equation*}
\sigma \left( id_{k},0,0,\gamma _{\left( 1,1\right) 2},1,0\right)
^{-1}=\sigma \left( id_{k},0,0,-\gamma _{\left( 1,1\right) 2},1,0\right) ,
\end{equation*}%
\begin{equation*}
\sigma \left( id_{k},0,\gamma _{1\left( 2,2\right) },0,1,0\right)
^{-1}=\sigma \left( id_{k},0,-\gamma _{1\left( 2,2\right) },0,1,0\right) ,
\end{equation*}%
\begin{equation*}
\sigma \left( id_{k},\gamma _{1,2},0,0,1,0\right) ^{-1}=\sigma \left(
id_{k},-\gamma _{1,2},-\gamma _{1,2}^{2},-\gamma _{1,2}^{2},1,0\right) .
\end{equation*}%
These formulas and (\ref{homom_decomp_1_4}) allow to conclude that the all
systems of words $W$ defined by formulas (\ref{real_list_nilp3}), (\ref%
{add_4}), (\ref{mul_scal_4}) and (\ref{mul_4}) fulfill conditions Op1) and
Op2). So we calculated the group $\mathfrak{S}_{4}$.

Now we will calculate the group $\mathfrak{Y}_{4}\cap \mathfrak{S}_{4}$. We
will consider the strongly stable automorphism $\Phi $ defined by the system
of words $W$ which fulfills conditions Op1) and Op2). We will find when this
automorphism is inner. We can prove as in the Subsection \ref{nilp3} that if 
$\Phi $ is inner then $\alpha _{2,1}=0$, $\varphi =id_{k}$. Now we will
prove that this conditions are sufficient. For this purpose for every $F\in 
\mathrm{Ob}\Theta _{4}^{0}$ we define the mapping $c_{F}:F\rightarrow F$ by
this formula:%
\begin{equation*}
c_{F}\left( f\right) =\alpha _{1,2}^{-1}f+\alpha _{1,2}^{-2}\gamma
_{1,2}f^{2}+\alpha _{1,2}^{-3}\left( \gamma _{1,2}^{2}+\gamma _{1\left(
2,2\right) }\right) f\left( f^{2}\right) +\alpha _{1,2}^{-3}\left( \gamma
_{1,2}^{2}+\gamma _{\left( 1,1\right) 2}\right) \left( f^{2}\right) f,
\end{equation*}%
where $f\in F$. We can prove by direct calculation that $c_{F}:F\rightarrow
F_{W}^{\ast }$ is an isomorphism. For example, we can check that $%
c_{F}\left( f_{1}f_{2}\right) =c_{F}\left( f_{1}\right) \times c_{F}\left(
f_{2}\right) $, where $f_{1},f_{2}\in F$. We have by nilpotence that 
\begin{equation*}
c_{F}\left( f_{1}f_{2}\right) =\alpha _{1,2}^{-1}f_{1}f_{2}.
\end{equation*}%
\begin{equation*}
c_{F}\left( f_{1}\right) \times c_{F}\left( f_{2}\right) =
\end{equation*}%
\begin{equation*}
\left( \alpha _{1,2}^{-1}f_{1}+\alpha _{1,2}^{-2}\gamma
_{1,2}f_{1}^{2}+\alpha _{1,2}^{-3}\left( \gamma _{1,2}^{2}+\gamma _{1\left(
2,2\right) }\right) f_{1}\left( f_{1}^{2}\right) +\alpha _{1,2}^{-3}\left(
\gamma _{1,2}^{2}+\gamma _{\left( 1,1\right) 2}\right) \left(
f_{1}^{2}\right) f_{1}\right) \times 
\end{equation*}%
\begin{equation*}
\left( \alpha _{1,2}^{-1}f_{2}+\alpha _{1,2}^{-2}\gamma
_{1,2}f_{2}^{2}+\alpha _{1,2}^{-3}\left( \gamma _{1,2}^{2}+\gamma _{1\left(
2,2\right) }\right) f_{2}\left( f_{2}^{2}\right) +\alpha _{1,2}^{-3}\left(
\gamma _{1,2}^{2}+\gamma _{\left( 1,1\right) 2}\right) \left(
f_{2}^{2}\right) f_{2}\right) =
\end{equation*}%
\begin{equation*}
\alpha _{1,2}\left( \alpha _{1,2}^{-1}f_{1}+\alpha _{1,2}^{-2}\gamma
_{1,2}f_{1}^{2}\right) \left( \alpha _{1,2}^{-1}f_{2}+\alpha
_{1,2}^{-2}\gamma _{1,2}f_{2}^{2}\right) -
\end{equation*}%
\begin{equation*}
\alpha _{1,2}^{-2}\gamma _{1,2}f_{1}^{2}f_{2}-\alpha _{1,2}^{-2}\gamma
_{1,2}f_{1}f_{2}^{2}=
\end{equation*}%
\begin{equation*}
\alpha _{1,2}^{-1}f_{1}f_{2}+\alpha _{1,2}^{-2}\gamma
_{1,2}f_{1}f_{2}^{2}+\alpha _{1,2}^{-2}\gamma _{1,2}f_{1}^{2}f_{2}-
\end{equation*}%
\begin{equation*}
\alpha _{1,2}^{-2}\gamma _{1,2}f_{1}f_{2}^{2}-\alpha _{1,2}^{-2}\gamma
_{1,2}f_{1}^{2}f_{2}=
\end{equation*}%
\begin{equation*}
\alpha _{1,2}^{-1}f_{1}f_{2}.
\end{equation*}%
Also by direct calculations we can prove that $c_{F}\left(
f_{1}+f_{2}\right) =c_{F}\left( f_{1}\right) \perp c_{F}\left( f_{2}\right) $%
, where $f_{1},f_{2}\in F$, and $c_{F}\left( \lambda f\right) =\lambda \ast f
$, where $f\in F$, $\lambda \in k$. We can prove as in the Subsection \ref%
{nilp3} that $c_{F}$ is an isomorphism. It is clear that $c_{F}\psi =\psi
c_{D}$ fulfills for every $F,D\in \mathrm{Ob}\Theta _{4}^{0}$ and every $%
\left( \psi :D\rightarrow F\right) \in \mathrm{Mor}\Theta _{4}^{0}$.
Therefore, as in the \cite{Tsurkov} we can prove that 
\begin{equation*}
\mathfrak{A/Y\cong }\left( U\left( k\mathbf{S}_{\mathbf{2}}\right) \mathfrak{%
/}U\left( k\left\{ e\right\} \right) \right) \mathfrak{\leftthreetimes }%
\mathrm{Aut}k\mathfrak{\cong }k^{\ast }\mathfrak{\leftthreetimes }\mathrm{Aut%
}k.
\end{equation*}
\end{proof}

\section{Summary.\label{table}}

\setcounter{equation}{0}

The results of this paper and of the \cite{Tsurkov} can be summarized in
this table:

\begin{tabular}{|c|c|c|}
\hline
& Variety of the all & $\mathfrak{A/Y}$ \\ \hline
1 & linear algebras & $k^{\ast }\mathfrak{\leftthreetimes }\mathrm{Aut}k$ \\ 
\hline
2 & commutative algebras & $\mathrm{Aut}k$ \\ \hline
3 & power associative algebras & $k^{\ast }\mathfrak{\leftthreetimes }%
\mathrm{Aut}k$ \\ \hline
4 & alternative algebras & $\mathbf{S}_{\mathbf{2}}\times \mathrm{Aut}k$ \\ 
\hline
5 & Jordan algebras & $\mathrm{Aut}k$ \\ \hline
6 & 
\begin{tabular}{c}
arbitrary subvariety of anticommutative \\ 
algebras defined by identities with coefficients from $%
\mathbb{Z}
$%
\end{tabular}
& $\mathrm{Aut}k$ \\ \hline
7 & 
\begin{tabular}{l}
nilpotent algebras with degree \\ 
of nilpotence no more than $3$%
\end{tabular}
& $k^{\ast }\mathfrak{\leftthreetimes }\mathrm{Aut}k$ \\ \hline
8 & 
\begin{tabular}{l}
nilpotent algebras with degree \\ 
of nilpotence no more than $4$%
\end{tabular}
& $k^{\ast }\mathfrak{\leftthreetimes }\mathrm{Aut}k$ \\ \hline
\end{tabular}

\section{Acknowledgements.}

I am thankful to Prof. I. P. Shestakov who was very heedful to my research.

I acknowledge the support by FAPESP - Funda\c{c}\~{a}o de Amparo \`{a}
Pesquisa do Estado de S\~{a}o Paulo (Foundation for Support Research of the
State S\~{a}o Paulo), project No. 2010/50948-2.

\end{document}